\begin{document}

\begin{abstract}
We link smooth Artin motives to étale local systems and Artin representations. We then construct the ordinary motivic t-structure on Artin motives with integral coefficients and show that the $\ell$-adic realization functor is t-exact.
\end{abstract}
\maketitle
\tableofcontents

\section*{Introduction}
\changelocaltocdepth{0}
Thanks to \cite{thesevoe,ayo07,tcmm,ayo14,em,rob,khan,anabelian}, we now have at our disposal stable $\infty$-categories of mixed étale motives $\mc{DM}_{\et}$ endowed with Grothendieck's six functors formalism and with $\ell$-adic realization functors. 
However, to connect those categories to Grothendieck's conjectural theory of motives, the motivic t-structure is still missing. Its heart would be the abelian category of mixed motivic sheaves which would satisfy Beilinson's conjectures \cite{Jannsen}. Working by analogy with the derived category of $\ell$-adic complexes, there are two possible versions of the motivic t-structure: the \emph{perverse motivic t-structure} and the \emph{ordinary motivic t-structure}. %The perverse motivic t-structure should furthermore satisfy the Affine Lefschetz Property. 

This problem seems completely out of reach at the moment. However, when $k$ is a field, the dimensional filtration on the stable $\infty$-category $\mc{DM}_{\et,c}(k,\Q)$ of constructible étale motives over $k$ with rational coefficients was introduced by Beilinson in \cite{beilinson-n-mot}; he showed that the existence of the motivic t-structure implies the existence of t-structures on the subcategory $\mc{DM}^n_{\et,c}(k,\Q)$ of \emph{$n$-motives} which is the subcategory generated by the cohomological motives of proper $k$-schemes of dimension less than $n$. Defining the motivic t-structure on those dimensional subcategories has proved to be easier in some cases.

This question also generalizes to an arbitrary base scheme if we define the subcategory $\mc{DM}^n_{\et,c}(S,\Q)$ of \emph{$n$-motives} in the same fashion. 
 %As derived category of $\ell$-adic complexes is the bounded derived category of the abelian category of perverse sheaves by \cite{notebeilinson}, the category of mixed motivic sheaves is conjectured to be the heart of perverse motivic t-structure.
%In \cite{bondarko15}, Bondarko has shown that the perverse motivic t-structure on $\mc{DM}_{\et,c}(S,\Q)$ can be recovered from the ordinary motivic t-structures on every $\mc{DM}_{\et,c}(k,\Q)$ (assuming they exist) when $k$ runs though the set of residue fields of $S$. However, his approach does not apply to the dimensional subcategories $\mc{DM}^n_c(S,\Q)$.

The strongest result on this problem concerns the ordinary motivic t-structure and stems from the work of Voevodsky, Orgogozo, Ayoub, Barbieri-Viale, Kahn, Pépin Lehalleur and Vaish.

\begin{theorem*}(\cite{orgo,ayo11,abv,bvk,plh,plh2,vaish}) Let $S$ be a noetherian excellent finite dimensional scheme allowing resolution of singularities by alterations, let $\ell$ be a prime number invertible on $S$ and let $n=0,1$.

Then, there is a t-structure on the stable $\infty$-category $\mc{DM}^n_{\et}(S,\Q)$ which induces a non-degenerate t-structure on the subcategory $\mc{DM}^n_{\et,c}(S,\Q)$ of constructible $n$-motives and such that the $\ell$-adic realization functor
$$\rho_\ell\colon\mc{DM}^n_c(S,\Q)\rar \mc{D}^b_c(S,\Q_\ell)$$
is t-exact when the derived category $\mc{D}^b_c(S,\Q_\ell)$ of constructible $\ell$-adic complexes is endowed with its ordinary t-structure.
\end{theorem*}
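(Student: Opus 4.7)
The plan is to treat the cases $n=0$ and $n=1$ separately, in each reducing the construction to an equivalence between $\DM^n_{\et,c}(S,\Q)$ and the bounded derived category of an abelian category that already carries a canonical t-structure, and then to verify t-exactness of $\rho_\ell$ by evaluating on a set of generators.

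For $n=0$, I would first treat the case where $S=\Spec(k)$ is the spectrum of a field: the argument of Voevodsky--Orgogozo identifies $\DM^0_{\et,c}(k,\Q)$ with the bounded derived category of finite-dimensional continuous $\Q$-representations of the absolute Galois group $\mathrm{Gal}(\bar k/k)$, carrying its evident t-structure. For a general base, I would identify $\DM^0_{\et,c}(S,\Q)$ with the bounded derived category of constructible étale $\Q$-sheaves on $S$ whose monodromy factors through a finite quotient along each stratum (i.e.\ Artin local systems), and transport the standard perverse/ordinary t-structure on constructible étale sheaves. The $\ell$-adic realization is then essentially the identity on generators, so its t-exactness is automatic.

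For $n=1$ over a field, I would invoke the theorem of Barbieri-Viale--Kahn giving an equivalence between $\DM^1_{\et,c}(k,\Q)$ and the bounded derived category of Deligne 1-motives with rational coefficients: the heart is the abelian category of $1$-motives up to isogeny, and the t-structure is transported along this equivalence. Over a general base, I would use Pépin Lehalleur's construction of the abelian category of $1$-motivic sheaves on $S$ and the corresponding derived equivalence, extending to singular $S$ via Vaish's gluing argument, which rests on Bondarko's weight structure and on resolution of singularities by alterations in order to descend the t-structure along a stratification of $S$ compatibly with the six functors. T-exactness of $\rho_\ell$ reduces to the generators, namely cohomological motives of smooth proper curves, where it follows because the realization sends a $1$-motive to the corresponding $\ell$-adic Tate module sitting in the expected degree.

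The hard part, and the reason the statement requires serious input from \cite{vaish,plh,plh2} rather than just the field case, is the \emph{gluing step} for $n=1$ over singular bases. The naïve approach of gluing the t-structures along a smooth stratification fails because the generating $1$-motives are not preserved by the six operations associated to arbitrary stratifications, and one needs alteration descent in a form compatible with the motivic six-functor formalism and with Bondarko's weights to produce a t-structure on the whole category that is simultaneously non-degenerate on constructibles and compatible with $\ell$-adic realization. Once this is in place, non-degeneracy reduces to a cohomological dimension bound on each stratum and t-exactness of $\rho_\ell$ follows formally by comparing the gluing data on the motivic and $\ell$-adic sides.
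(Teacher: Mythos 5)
The theorem you are addressing is quoted in the paper's introduction as a survey of prior work and is not given a proof there; the closest the paper comes is \Cref{section 4}, which re-derives and generalizes the $n=0$ case (the ordinary homotopy t-structure). Measured against what those cited works and \Cref{section 4} actually do, your proposal has a genuine gap at the step where you pass from a field to a general base.

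For $n=0$ you propose to \emph{identify} $\DM^0_{\et,c}(S,\Q)$ with the bounded derived category of constructible étale $\Q$-sheaves on $S$ with finite monodromy along each stratum, and then transport the ordinary t-structure. No such equivalence exists once $\dim S > 0$. The obstruction is that the realization of Artin motives into any reasonable category of constructible complexes on $S$ is conservative but not full: for $S$ geometrically unibranch, $\Hom_{\DM^A_{\et}(S,\Q)}(\un_S,\un_S[n])$ is $H^n_{\et}(S,\Q)$, which vanishes for $n>0$ by Deninger's theorem (\Cref{den}), whereas the $\ell$-adic groups $H^n_{\et}(S,\Q_\ell)$ do not. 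Equivalently, the recollement data for $\DM^A_{\et,c}$ along a stratification uses the Artin truncation $\omega^0 j_*$ — whose value on $\un_U$ is the Ayoub--Zucker motive $\mb{E}_S$, see \Cref{EMX} — and this does \emph{not} realize to the untruncated $j_*$ on the sheaf side. The glued motivic category is therefore a genuinely different object from the glued sheaf category, and a t-structure cannot be pulled back along $\rho_\ell$; it must be built on the motivic side. That is what the cited works (and \Cref{section 4}) do: the t-structure is generated by the motives $M_S^{\BM}(X)$ for $X$ quasi-finite over $S$, and one shows by stratification that the truncations preserve constructibles, using the identification of smooth Artin motives over a \emph{regular stratum} with lisse sheaves (\Cref{smooth Artin h-motives}) — a statement about strata, not about all of $S$. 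T-exactness of $\rho_\ell$ on generators is then a computation, not an automatic consequence of an equivalence.

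The same defect affects your treatment of $n=1$: $\DM^1_{\et,c}(S,\Q)$ is not a bounded derived category of $1$-motivic sheaves over a general base, and Pépin Lehalleur and Vaish construct a t-structure \emph{on} the motivic category rather than transporting one from a category of sheaves. You correctly flag that gluing over singular bases is where most of the work lies, but you underestimate the $n=0$ case: the presence of $\omega^0$ in the gluing functors makes the $n=0$ gluing step just as far from being a transport problem.
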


The goal of this paper and of its sequel is to generalize this result to integral coefficients in the case of $0$-motives (also known as Artin motives). Recall that Voevodsky showed in \cite[4.3.8]{orange} that, it is not possible to construct a reasonable motivic t-structure on the stable $\infty$-category $\mc{DM}_\mathrm{Nis}(k,\Z)$ of Nisnevich motives for a general base field $k$. However, the motivic t-structure is expected to exist on the stable $\infty$-category $\mc{DM}_{\et}(S,\Z)$ of étale motives.

 %for the étale or the Nisnevich model.
%% However, the obstruction described by Voevodsky vanishes étale-locally \cite{??} and thus, one can hope to construct a motivic t-structure on the étale model $\mc{DM}_{\et}(S,\Z)$.

%In \cite{aps}, we determined what answer to this question could reasonably be expected in the case $n=0$, looking at the analogous situation in the setting of $\ell$-adic complexes. 
%The first goal of this paper is to answer both of the above questions in the case of étale motives. This allows to formulate the statement of the Artin Vanishing Theorem for Artin motives which is \Cref{Affine Lefschetz Q-cons}.

Let $R$ be a ring of coefficients. The main characters of our text will be the following categories of Artin motives (also known as $0$-motives).
\begin{itemize} \item The category $\DM^A_{\et,c}(S,R)$ of constructible Artin étale motives (or $0$-motives with the above terminology) defined as the thick subcategory (in the sense of \Cref{thickloc}) of the stable $\infty$-category $\DM_{\et}(S,R)$ of étale motives of \cite{ayo14,em} generated by the cohomological motives of finite $S$-schemes.
\item The category $\DM^A_{\et}(S,R)$ of Artin étale motives defined as the localizing subcategory (in the sense of \Cref{thickloc}) of $\DM_{\et}(S,R)$ with the same generators.
\end{itemize}

\subsection*{Rigidity for torsion motives}
One of the main contrasts between Artin étale motives with rational coefficients and Artin étale motives with integral coefficients is that the category of integral motives will contain many motives which should come from higher dimensional motives with torsion coefficients. This is in fact an instance of the rigidity theorem. Let $\Shv(S_{\et},R)$ be the stable $\infty$-category of hypersheaves on the small étale site of $S$ with value in the derived category of $R$-modules. We have by \Cref{small et site 2} a functor
$$\rho_!\colon \Shv(S_{\et},R)\rar \DM^A_{\et}(S,R)$$ which sends a sheaf to the infinite Tate suspension of the $\mb{A}^1$-localization of its extension to the big étale site. This functor is in fact very close to being an equivalence when restricted to torsion objects. More precisely, we define  
\begin{itemize}
    \item The stable category of \emph{torsion étale motives} (resp. \emph{$p^\infty$-torsion étale motives}) denoted by $\mc{DM}_{\et,\mathrm{tors}}(S,R)$ (resp. $\mc{DM}_{\et,p^\infty-\mathrm{tors}}(S,R)$) as the subcategory of the stable category $\mc{DM}_{\et}(S,R)$ made of those objects $M$ such that the motive $M\otimes_\Z \Q$ (resp. $M\otimes_\Z \Z[1/p]$) vanishes.
    \item The stable category $\Shv_{\mathrm{tors}}(S_{\et},R)$ (resp. $\Shv_{p^\infty-\mathrm{tors}}(S_{\et},R)$ ) of \emph{torsion étale sheaves} (resp. \emph{$p^\infty$-torsion étale sheaves}) as the subcategory of the stable category $\Shv(S_{\et},R)$ made of those sheaves $M$ such that the sheaf $M\otimes_\Z \Q$ (resp. $M\otimes_\Z \Z[1/p]$) vanishes.
\end{itemize}
The following result is then a consequence of the Rigidity Theorem \cite[3.2]{bachmanrigidity}.
\begin{proposition*}(\Cref{rigidity V2} and \Cref{torsion motives are Artin})
Let $S$ be a scheme and let $R$ be a good ring (see Notations and Conventions below). Then, 
\begin{enumerate}
    \item Torsion motives are Artin motives.
    \item If $p$ is a prime number, let $j_p\colon S[1/p]\rar S$ be the open immersion. Then, the functor $$\prod_{p \text{ prime}} \Shv_{p^\infty-\mathrm{tors}}(S[1/p]_{\et},R) \rar \mc{DM}_{\et,\mathrm{tors}}(S,R)$$
    which sends an object $(M_p)_{p \text{ prime}}$ to the object $\bigoplus\limits_{p \text{ prime}} j_p^*\rho_! M_p$ is an equivalence.
\end{enumerate}
\end{proposition*}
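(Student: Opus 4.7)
The plan is to bootstrap the Rigidity Theorem \cite[4.5.2]{em}: for any prime $\ell$ invertible on a scheme $T$, the functor $\rho_!$ induces an equivalence between $\ell$-torsion hypersheaves on $T_{\et}$ and $\ell$-torsion objects in $\mc{DM}_{\et}(T,R)$. Since $\rho_!$ factors through the Artin subcategory by construction, this equivalence will both furnish (1) on an appropriate open subscheme and supply the inverse needed for (2).

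The first step is to establish a canonical primary decomposition: any torsion object $M$ in either $\mc{DM}_{\et}(S,R)$ or $\Shv(S_{\et},R)$ splits as $M \simeq \bigoplus_{p} M_p$ with each $M_p$ being $p^\infty$-torsion, functorially in $M$. This is the homotopical analogue of the $p$-primary decomposition of torsion abelian groups and should follow from the orthogonality $\Z/p \otimes_{\Z} \Z/q \simeq 0$ for distinct primes $p,q$, together with compatibility with filtered colimits applied to torsion-bounded pieces. Both categories being presentable and stable, the decomposition is automatic once the orthogonality is established on the compact generators.

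Next, fix a prime $p$ and work over $S[1/p]$. Rigidity yields an equivalence
\[
\rho_!\colon \Shv_{p^\infty-\mathrm{tors}}(S[1/p]_{\et}, R) \xrightarrow{\sim} \mc{DM}_{\et, p^\infty-\mathrm{tors}}(S[1/p], R),
\]
so every $p^\infty$-torsion motive on $S[1/p]$ is Artin. To get (1) for a $p^\infty$-torsion motive $M_p$ on $S$ itself, I would argue that $M_p$ is determined by its restriction to $S[1/p]$ --- that is, the natural map $M_p \rar j_{p,*} j_p^* M_p$ (or its $j_{p,!}$ variant) is an equivalence --- since its restriction to the closed complement, which sits over characteristic $p$, is forced to vanish by Rigidity applied at the residue fields. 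Combined with stability of the Artin subcategory under $j_p^*$, $j_{p,!}$ and $j_{p,*}$, this shows $M_p$, and hence $M$, is Artin.

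For (2), the inverse functor sends a torsion motive $M \simeq \bigoplus_p M_p$ to the tuple whose $p$-th component is the étale hypersheaf on $S_{\et}$ extending, via the small site morphism $S[1/p]_{\et} \rar S_{\et}$, the sheaf corresponding to $j_p^* M_p$ under rigidity on $S[1/p]$. That this is inverse to the given functor reduces, via the primary decomposition, to a prime-by-prime check, which is exactly rigidity on $S[1/p]$. The principal obstacle is verifying that a $p^\infty$-torsion motive on $S$ is genuinely concentrated on $S[1/p]$ in the motivic sense; this is where the étale (rather than Nisnevich) hypothesis is crucial, as the analogous assertion fails Nisnevich-locally because of the Artin--Schreier phenomena exploited in Voevodsky's counterexample \cite[4.3.8]{orange}.
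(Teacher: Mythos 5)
Your strategy follows essentially the same route as the paper: split into $p^\infty$-torsion pieces, reduce to $S[1/p]$ where $p$ is invertible, apply the Rigidity Theorem there, and then invoke stability of the Artin subcategory. The overall architecture is sound, but two of your intermediate claims are inaccurate as stated.

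First, you invoke ``stability of the Artin subcategory under $j_p^*$, $j_{p,!}$ and $j_{p,*}$.'' The last of these is false in general: the paper explicitly records that $j_*\un_U$ need not be Artin for an open immersion $j$, which is precisely why $\omega^0$ enters the story. The paper's proof of \Cref{torsion motives are Artin} avoids this by writing $M_p = (j_p)_!\rho_! N_p$ and using only \Cref{f_!} (stability under $f_!$ for quasi-finite $f$). Your argument works if you replace $j_{p,*}$ by $j_{p,!}$ throughout; alternatively, one can salvage the $j_{p,*}$ route by invoking \Cref{omega^0 torsion}, which says $\omega^0$ is the identity on torsion motives, so $\omega^0(j_p)_* = (j_p)_*$ on that subcategory --- but this needs to be said explicitly. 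Second, you attribute the vanishing of a $p^\infty$-torsion motive over the characteristic-$p$ locus to ``Rigidity applied at the residue fields.'' The Rigidity Theorem \cite[4.5.2]{em} identifies torsion motives with sheaves when the torsion is \emph{prime to} the residue characteristics; it says nothing about $p$-torsion in characteristic $p$. The fact actually used is that for $\Z/p\Z$-linear coefficients the pullback $\DM_{\et}(S,R/pR)\rar \DM_{\et}(S[1/p],R/pR)$ is an equivalence --- the paper reduces mod $p$ and cites \cite[A.3.4]{em}, which is the étale-topological manifestation of the Artin--Schreier phenomenon you correctly point to at the end, but it is not ``rigidity.''

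One smaller remark: you claim the primary decomposition is ``automatic once orthogonality is established on the compact generators.'' The $p^\infty$-torsion pieces are not compact, and orthogonality on a set of compact generators does not by itself produce an orthogonal decomposition of every object without some additional argument. The paper's route is more direct and worth adopting: for a torsion $M$ one has $M \simeq M \otimes \underline{\Q/\Z}[-1] \simeq \bigoplus_{p} M \otimes \underline{\Z(p^\infty)}[-1]$, which exhibits both the decomposition and (via $\Z(q^\infty)\otimes_\Z \Z[1/p] = \Z(q^\infty)$ for $q\neq p$) the required orthogonality of the summands in one stroke.
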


%Since torsion motives contain all the necessary information to construct $\ell$-adic cohomology, the above result shows that Artin motives with integral coefficients contain information about higher weight motives.

This result will allow us to construct the ordinary t-structure on torsion motives. One of the main ideas of this text is to deduce information on the t-structures on Artin motives with integral coefficients from the t-structures on Artin motives with rational coefficients and the t-structures on torsion motives.

\subsection*{Smooth Artin motives}
The subcategory of dualizable (or lisse) objects of the derived category of $\ell$-adic sheaves plays a key role in the study of $\ell$-adic sheaves and perverse sheaves. They can be described in terms of local systems and are therefore much more tractable than general perverse sheaves. In this text, the subcategory that will play the role of lisse $\ell$-adic sheaves is the category of smooth Artin motives. 
\begin{itemize}
\item The category $\DM^{smA}_{\et,c}(S,R)$ of constructible smooth Artin étale motives defined as the thick subcategory of $\DM_{\et}(S,R)$ generated by the cohomological motives of finite étale $S$-schemes.
\item The category $\DM^{smA}_{\et}(S,R)$ of smooth Artin étale motives defined as the localizing subcategory of $\DM_{\et}(S,R)$ with the same generators.
\end{itemize}
%The two latter categories will play in our chapter the same role that the category of lisse complexes plays in \cite{bbd}. 
An important step towards the answer of the questions above is to describe the categories $\DM^{smA}_{\et,(c)}(S,R)$ more precisely. Extending a result of \cite{orgo}, we link them together the more concrete and tractable category of Artin representations of the étale fundamental group of $S$.

The categories of smooth Artin motives can be identified to the following categories.
\begin{itemize}
    %\item The abelian category $\mathrm{Loc}_S(R)$ of \emph{locally constant sheaves of $R$-modules} as the subcategory of the abelian category $\Sh(S_{\et},R)$ made of those sheaves which are étale-locally constant with finitely presented fibers.
    %\item The abelian category $\In\mathrm{Loc}(S,R)$ \emph{Ind-locally constant sheaves of $R$-modules} as the subcategory of the abelian category $\Sh(S_{\et},R)$ made of those sheaves which filtered colimits of locally constant sheaves of $R$-modules.
	\item The \emph{category of lisse étale sheaves} $\Shv_{\mathrm{lisse}}(S,R)$ as the subcategory of dualizable objects of the stable $\infty$-category $\Shv(S_{\et},R)$.
    \item The \emph{category of Ind-lisse étale sheaves} $\Shv_{\In\mathrm{lisse}}(S,R)$ as the localizing subcategory of the stable $\infty$-category $\Shv(S_{\et},R)$ generated by $\Shv_{\mathrm{lisse}}(S,R)$.
\end{itemize}

\begin{theorem*}(\Cref{smooth Artin h-motives}, \Cref{loc cst,description des faisceaux lisses} and \Cref{Artin etale motives over a field}) Let $R$ be a regular good ring and let $S$ be a regular scheme.
\begin{enumerate}
    \item Assume that the residue characteristic exponents of $S$ are invertible in $R$. Then, the functor $\rho_!$ of \Cref{small et site 2} induces monoidal equivalences
$$\Shv_{\In\lis}(S,R)\longrightarrow \DM^{smA}_{\et}(S,R)$$
$$\Shv_{\lis}(S,R)\longrightarrow \DM^{smA}_{\et,c}(S,R).$$
    \item The ordinary t-structure on the stable $\infty$-category $\Shv(S_{\et},R)$ induces t-structures on the subcategories $\Shv_{\lis}(S,R)$ and $\Shv_{\In\lis}(S,R)$. 
    \begin{enumerate}
        \item The heart of the induced t-structure on  $\Shv_{\lis}(S,R)$ is the category $\Loc_S(R)$ of locally constant sheaves of $R$-modules with finitely presented fibers.
        \item The heart of the induced t-structure on  $\Shv_{\In\lis}(S,R)$ is the category $\In\Loc_S(R)$ of filtered colimits of locally constant sheaves of $R$-modules with finitely presented fibers.
    \end{enumerate}
    \item If $S$ is connected, letting $\xi$ be a geometric point of $S$, the fiber functor associated to $\xi$ induces an equivalence of abelian monoidal categories $$\xi^*\colon\mathrm{Loc}_S(R)\rar \Rep^A(\pi_1^{\et}(S,\xi),R)$$
    where $\Rep^A(\pi_1^{\et}(S,\xi),R)$ is the category of Artin representations of the étale fundamental group of $S$ with base point $\xi$.
    \item If $S$ is the spectrum of a field $k$ of characteristic exponent $p$. Then, the functor $\rho_!$ induces monoidal equivalences 
$$\mc{D}(\mathrm{Mod}(G_k,R[1/p]))\simeq \Shv(k_{\et},R[1/p])\longrightarrow \DM_{\et}^{A}(k,R),$$
$$\mc{D}^b(\Rep^A(G_k,R[1/p]))\simeq \Shv_{\lis}(k,R[1/p]) \longrightarrow \DM_{\et,c}^{A}(k,R),$$ where $\Rep^A(G_k,R[1/p])$ 
(resp. $\mathrm{Mod}(G_k,R[1/p])$) is the category of Artin representations (resp. discrete representation) of the absolute Galois group $G_k$ of $k$ with coefficients in $R[1/p]$.
\end{enumerate}

\end{theorem*}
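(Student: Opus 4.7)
The plan is to prove part (1) first, since it is the heart of the matter, and then deduce (2), (3), (4) from it combined with classical input. Throughout, I rely on the fact that $\rho_!$ is symmetric monoidal and that it sends $f_* \un_Y$, for $f\colon Y\to S$ finite étale, to the cohomological motive of $Y$ over $S$; these are the compact generators on both sides, so essential surjectivity is built in and only full faithfulness needs work.

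For (1), the heart of the argument is full faithfulness on compact generators. Given finite étale $f\colon Y\to S$ and $g\colon Z\to S$, base change and the projection formula for the finite étale map $Y\times_S Z\to S$ reduce the $\Hom$-comparison to the identification
\[
R\Gamma_\et(Y\times_S Z,\,R)\;\simeq\;\Hom_{\DM^A_\et(S,R)}\bigl(f_*\un_Y,\,g_*\un_Z\bigr),
\]
i.e.\ étale cohomology of the regular scheme $Y\times_S Z$ versus its weight-zero étale motivic cohomology. Splitting $R$ along the arithmetic fracture square, the rational part is handled by the theorem of Ayoub and Pépin Lehalleur recalled in the introduction, while for each prime $\ell$ invertible in $R$ (which is forced by the residue-characteristic hypothesis) the $\ell$-primary part is an instance of the rigidity theorem of \cite{em} applied on the torsion side, as packaged in the Proposition on torsion motives above. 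Passing to ind-completions produces the equivalence $\Shv_{\In\lis}(S,R)\simeq\DM^{smA}_\et(S,R)$.

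Part (2) follows once (1) is established: I must verify that the ordinary truncations of a lisse sheaf remain lisse. Pulling back $F\in\Shv_\lis(S,R)$ along a finite étale Galois cover $S'\to S$ that trivializes $F$ into a bounded complex of finite projective $R$-modules, the truncations act termwise and preserve finite projectivity, and Galois descent then identifies the truncations of $F$ as locally constant sheaves with finitely presented fibers; the ind-lisse case follows from exactness of filtered colimits in the ordinary t-structure. Part (3) is classical descent: on connected $S$, a locally constant sheaf with finitely presented fibers is trivialized by a finite Galois étale cover, so it corresponds to a continuous $\pi_1^\et(S,\xi)$-action on its geometric fiber factoring through a finite quotient, i.e.\ an Artin representation, and the fiber functor is a tensor equivalence. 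For (4), the étale site of $\Spec k$ is $BG_k$, giving $\Shv(k_\et,A)\simeq\mc{D}(\mathrm{Mod}(G_k,A))$ for any coefficient ring $A$; the appearance of $R[1/p]$ reflects the vanishing of the characteristic-$p$ contribution to $\rho_!$ inside $\DM^A_\et(k,R)$, so $\rho_!$ factors through $R[1/p]$-coefficients, where (1) may be invoked directly.

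The main obstacle is the $\ell$-primary side of the Hom comparison in part (1): the fracture splitting must be carried out compatibly with $\rho_!$ on a non-torsion integral sheaf, and extracting the lisse motivic/étale comparison from rigidity on the torsion side requires tracking the $\Z_\ell$-completion carefully. The hypothesis that the residue characteristic exponents of $S$ are invertible in $R$ is exactly what is needed for the rigidity equivalence to apply uniformly in $\ell$.
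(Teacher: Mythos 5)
Your reduction of part (1) to a comparison of étale and weight-zero motivic cohomology on the regular scheme $Y\times_S Z$ is exactly the route the paper takes (after making $f_*\un_Y$ self-dual via monoidality of $\rho_!$), and the split into rational and torsion pieces with rigidity handling the torsion side is also the same. However, the ingredient you invoke for the rational piece does not do what you need. The theorem recalled in the introduction (attributed to Ayoub, Pépin Lehalleur and others) asserts the \emph{existence of the ordinary motivic t-structure} on $\mc{DM}^n_{\et,c}(S,\Q)$ and t-exactness of the $\ell$-adic realization; it does not identify $\Hl^{n,0}_{\mc{M},\et}(T,\Q)$ with $\Hl^n_{\et}(T,\Q)$ for a regular connected $T$, which is what the full-faithfulness check actually requires. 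The paper supplies this with two concrete computations: Deninger's vanishing $\Hl^n_{\et}(T,\Q)=0$ for $n>0$ and $T$ geometrically unibranch connected (\Cref{den}), and the weight-zero motivic cohomology computation $\Hl^{n,0}_{\mc{M},\et}(T,R)=R$ or $0$ for $T$ regular connected (\cite[B.5]{plh}). Without naming that input, the rational half of your argument has a genuine hole; citing the t-structure theorem does not fill it.

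Two smaller points. In part (2), a lisse complex need not be trivialized by a single \emph{finite Galois} cover; the correct statement, and all that is needed, is that it is étale-locally constant with perfect value (\cite[0FPP]{stacks}), since truncations commute with pullback to a trivializing cover and truncations of perfect complexes over the regular noetherian ring $R$ stay perfect. In part (4), the equivalence $\Shv(k_\et,A)\simeq\mc{D}(\mathrm{Mod}(G_k,A))$ and the reduction of $\rho_!$ to $R[1/p]$-coefficients are fine, but you omit the identification $\mc{D}^b(\Rep^A(G_k,R[1/p]))\simeq\Shv_{\lis}(k,R[1/p])$, which is not formal: it is an Ext-group comparison between the bounded derived category of the abelian category of Artin representations and the subcategory of perfect complexes inside $\mc{D}(\mathrm{Mod}(G_k,R[1/p]))$ (the paper's \Cref{lisse fields}(4), via \cite[I.4.12]{sga6} or a direct continuity argument on cohomology of finite quotients). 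That step deserves an explicit argument since it is where the ``Artin'' restriction on representations enters.
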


The above theorem shows in particular that the stable $\infty$-category $\DM^{smA}_{\et}(S,R)$ (resp. $\DM^{smA}_{\et,c}(S,R)$) is endowed with a t-structure and identifies the heart with an abelian category of étale sheaves. %In addition, if $R$ is a localization of $\Z$, the $\ell$-adic realization functor $$\DM^{smA}_{\et}(S,R) \rar \mc{D}^b_c(S,R\otimes_\Z \Z_\ell)$$ is t-exact when $\mc{D}^b_c(S,R\otimes_\Z \Z_\ell)$ is endowed with its ordinary t-structure or with its perverse t-structure shifted by an integer $\delta(S)$ which is the value of the dimension function $\delta$, chosen to define the perverse t-structure, at $S$.
Furthermore, this theorem shows that we can describe the objects of $\DM^A_{\et,c}(S,R)$ as a gluing in the sense of \cite{bbd} of Artin representations. 

The last consequence of the above theorem is a version with integral coefficients of the conservativity conjecture for Artin étale motives (see \Cref{conservativityconj}). It is slightly different from the version with rational coefficients proved in \cite{plh}. Indeed, we need to include all the $v$-adic realization where $v$ runs through the set of valuations on the fraction field of $R$ which are non-negative on $R$ (\textit{i.e.} such that $\{x \in R\mid v(x)>0\}$ defines a prime ideal of $R$). Otherwise any motive of $\ell$-torsion would provide a counterexample to the result.

\subsection*{The ordinary motivic t-structure}
In this paper and its sequel, we will introduce two \emph{homotopy t-structures} on the category $\DM^A_{\et}(S,R)$ whose definitions are inspired by the approach of \cite{plh,vaish,ayo07,bondarko-deglise}. This paper will focus on the \emph{ordinary homotopy t-structure} while the sequel will focus on the \emph{perverse homotopy t-structure}. The t-structures are defined by giving a family of t-non-positive generators. Our goal will be to determine when those t-structures induce t-structures on the category of constructible objects and when the realization functor is t-exact.

We extend the main results of \cite{plh2,vaish} about the ordinary motivic t-structure to the case of integral coefficients.

\begin{theorem*}(\Cref{ordinary is trop bien}) Let $S$ be a scheme allowing resolution of singularities by alterations and let $R$ be a regular good ring. 
\begin{enumerate}\item The ordinary homotopy t-structure induces a non-degenerate t-structure on the subcategory $\DM^A_{\et,c}(S,R)$. We still call this t-structure the \emph{ordinary homotopy} t-structure.
%\item Let $f\colon T\rar S$ be a morphism of schemes. Then, the functor $$f^*\colon \DM_{\et,c}^A(S,R)\rar \DM_{\et,c}^A(T,R)$$ is t-exact with respect to the ordinary homotopy t-structure. 
%\item If $x$ is a point of $S$, denote by $i_x\colon \{ x\}\rar S$ the inclusion. The family $$\left( i^*_x:\DM_{\et,c}^A(S,R)\rar \DM_{\et,c}^A(k(x),R)\mid x \in S \right)$$ is a conservative family of ordinary homotopy t-exact functors. 
\item Assume that the ring $R$ is a localization of the ring of integers of a number field $K$. Let $v$ be a non-archimedian valuation on $K$ and let $\ell$ be the prime number such that $v$ extends the $\ell$-adic valuation. Then, the reduced $v$-adic realization functor 
$$\overline{\rho}_v\colon \DM_{\et,c}^A(S,R)\rar \mc{D}^b_c(S[1/\ell],R_v)$$ of \Cref{reduced l-adic real} is t-exact when the left hand side is endowed with the ordinary homotopy t-structure and the right hand side is endowed with the ordinary t-structure.
\end{enumerate}
\end{theorem*}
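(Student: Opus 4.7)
The plan is to reduce both statements to their counterparts with rational coefficients and with torsion coefficients, and to glue via an arithmetic fracture. For any constructible Artin motive $M\in\DM^A_{\et,c}(S,R)$, consider the fiber sequence $M_{\mathrm{tors}}\to M\to M\otimes_{\Z}\Q$. The rational part $M\otimes_{\Z}\Q$ lies in $\DM^A_{\et,c}(S,R\otimes\Q)$, on which the theorem of Pépin Lehalleur and Vaish (available because $S$ admits resolution of singularities by alterations) provides an ordinary homotopy t-structure with constructible truncations and a t-exact $v$-adic realization. The torsion part, by the rigidity proposition recalled in the introduction, is a finite sum $\bigoplus_p j_p^*\rho_!N_p$ with $N_p\in\Shv_{p^\infty-\mathrm{tors}}(S_{\et},R)$, and therefore carries the standard t-structure transported from $\Shv(S_{\et},R)$; this transported t-structure preserves constructibility and is tautologically compatible with reduction modulo any prime.

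To prove part (1), I would declare an object $M$ of $\DM^A_{\et,c}(S,R)$ to be t-non-positive precisely when both $M_{\mathrm{tors}}$ and $M\otimes_{\Z}\Q$ are t-non-positive for the t-structures just described, and assemble the truncation functors termwise along the rationalization triangle. The key verification is that this constructible t-structure coincides with the restriction of the ordinary homotopy t-structure defined on the big category $\DM^A_{\et}(S,R)$. I would check this on the explicit generators of the latter, using the smooth Artin motives theorem of the paper to handle the finite étale generators over a regular base and an alteration argument to descend the general case to the regular one. Non-degeneracy propagates from both fractures.

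For part (2), t-exactness of $\overline{\rho}_v$ is checked separately on each piece of the fracture. On the rational part it is the content of the Pépin Lehalleur--Vaish theorem. On the torsion part, the fact that $\ell$ is inverted on $S[1/\ell]$ forces $\overline{\rho}_v$ to annihilate all $p$-power torsion for $p\neq\ell$ and to identify with the underlying $\ell$-power torsion hypersheaf otherwise, both of which are manifestly t-exact for the ordinary t-structure. The main obstacle, and the technical heart of the proof, is the generator compatibility in part (1): showing that the homotopy-theoretic generators of the big category project to t-non-positive objects on both fractures simultaneously. I expect this to combine the Artin truncation computation over a field from the previous subsection with an alteration argument, allowing one to replace an arbitrary proper cover by a generically finite étale one to which the smooth Artin description applies.
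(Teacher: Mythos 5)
Your proposal replaces the paper's $p$-localization argument by the coarser arithmetic fracture $M_{\mathrm{tors}}\to M\to M\otimes_{\Z}\Q$, and this is where the gap lies. The torsion fiber $M_{\mathrm{tors}}=M\otimes_{\Z}\Q/\Z[-1]$ of a constructible Artin motive is \emph{never} constructible unless it vanishes: it is the infinite colimit $\colim_n M\otimes_{\Z}\Z/n\Z[-1]$, and the sum over primes $\bigoplus_p j_p^*\rho_! N_p$ you invoke is typically infinite, not finite, with each $N_p$ a non-constructible (Ind-)sheaf. Consequently, even granting that ${}^{\ord}\tau_{\leqslant 0}(M)\otimes\Q$ is constructible over $R\otimes\Q$ (Pépin Lehalleur--Vaish) and that ${}^{\ord}\tau_{\leqslant 0}(M)_{\mathrm{tors}}$ is a well-behaved torsion motive, you cannot conclude that ${}^{\ord}\tau_{\leqslant 0}(M)$ is constructible over $R$. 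Rational constructibility plus torsion-ness simply does not imply integral constructibility; that failure is precisely why the paper introduces the auxiliary notion of $\Q$-constructible motives, which forms a strictly larger thick subcategory than the constructible one.

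The paper's proof circumvents this by using $\mathfrak p$-localization of the ambient stable categories (\Cref{p-loc}, \Cref{p-locality}): an Artin motive is constructible if and only if its image in $\DM^A_{\et}(S,R)_{\mathfrak p}$ is constructible for every maximal ideal $\mathfrak p$ of $\Z$. This criterion is a statement about thick subcategories, and it retains strictly more information than the $\Q$-vs.-torsion decomposition because $\DM^A_{\et,c}(S,R)_{\mathfrak p}\simeq\DM^A_{\et,c}(S,R\otimes_{\Z}\Z_{\mathfrak p})$ keeps track of $p$-integral structure. With that reduction in hand, the paper then splits $S$ into $S[1/p]\sqcup S_p$ using the localization triangle and the $t_{\ord}$-exactness of $f^*$ (\Cref{f^* ordinary}), stratifies so that the restriction of $M$ to each stratum is smooth Artin (\Cref{stratification}), and finally appeals to the $\mathfrak p$-localized rigidity statement (\Cref{p-loc lemma 1}) in the $p$-invertible case and to \cite[A.3.4]{em} with \Cref{smooth Artin ordinary V1} in the characteristic-$p$ case. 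So the missing ingredient in your sketch is exactly this $\mathfrak p$-local thick-subcategory criterion: without it, the fracture-by-rationalization argument does not close up. (Your remarks about part (2), and about reducing to the smooth Artin case over a regular base, are in line with the paper; the issue is confined to the constructibility of the truncation functors in part (1).)
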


To prove this result, we first study the case of smooth Artin motives and of torsion motives. We show that the inclusion functors $$\DM^{smA}_{\et,c}(S,R)\rar \DM^A_{\et}(S,R)$$
and 
$$\DM_{\mathrm{tors}}(S,R)\rar \DM^A_{\et}(S,R)$$
are t-exact when the left hand sides are endowed with their ordinary t-structures and the right hand side is endowed with the ordinary homotopy t-structure (see \Cref{smooth Artin ordinary V1,ordinary homotopy torsion}).
To finish the proof, we show that the pullback functors are t-exact (see \Cref{f^* ordinary}) and conclude by stratifying any constructible motive by smooth Artin motives.

\subsection*{Acknowledgements}

This paper is part of my PhD thesis, done under the supervision of Frédéric Déglise. I would like to express my deepest gratitude to him for his constant support, for his patience and for his numerous suggestions and improvements. 

My sincere thanks also go to Joseph Ayoub and Jakob Scholbach for their very detailed reports that allowed me to greatly improve the quality of this paper.
I would also like to thank warmly Adrien Dubouloz, Sophie Morel, Riccardo Pengo, Simon Pepin Lehalleur and Jörg Wildeshaus for their help, their kind remarks and their questions which have also enabled me to improve this work.

Finally, I would like to thank Olivier Benoist, Robin Carlier, Mattia Cavicchi, Wiesława Nizioł, Fabrice Orgogozo, Timo Richarz, Wolfgang Soergel, Markus Spitzweck, Olivier Taïbi, Swann Tubach and Olivier Wittenberg for their interest in my work and for some helpful questions and advices. 

\changelocaltocdepth{2}
\section*{Notations and conventions}

In this text we will freely use the language of $\infty$-categories of \cite{htt,ha}.
The term category will by default mean $\infty$-category. When we refer to derived categories, we always refer to the
$\infty$-categorical version.
We will also freely use the language of premotivic and motivic categories of \cite{tcmm}\footnote{A quick introduction (in French) can be found in the first part of my Ph. D. Thesis, available at \url{https://sites.google.com/view/raphael-ruimy/research}.}

All schemes are assumed to be \textbf{noetherian} and of \textbf{finite dimension}; furthermore all smooth (and étale) morphisms and all quasi-finite morphisms are implicitly assumed to be separated and of finite type. 

We let $\Sm$ be the class of smooth morphisms of schemes. For a scheme $S$, we let $S_{\et}$ (resp. $\Sm_S$) be the category of étale (resp. smooth) $S$-schemes.

We adopt the cohomological convention for t-structures (\textit{i.e} the convention of \cite[1.3.1]{bbd} which is the opposite of the convention of \cite[1.2.1.1]{ha}): a t-structure on a stable category $\mc{D}$ is a pair $(\mc{D}^{\leqslant 0},\mc{D}^{\geqslant 0})$ of strictly full\footnote{Recall that a strictly full subcategory is a full subcategory whose set of objects is closed under isomorphisms.} subcategories of $\mc{D}$ having the following properties: %$\mc{D}^{\leqslant n}=\mc{D}^{\leqslant 0}[-n]$ and $\mc{D}^{\geqslant n}=\mc{D}^{\geqslant 0}[-n]$, we have
\begin{itemize}
	\item For any object $M$ of $\mc{D}^{\leqslant 0}$ and any object $N$ of $\mc{D}^{\geqslant 0}$, the abelian group $\pi_0 \Map(M,N[-1])$ vanishes.
	\item We have inclusions $\mc{D}^{\leqslant 0} \subseteq \mc{D}^{\leqslant 0}[-1]$ and $\mc{D}^{\geqslant 0}[-1] \subseteq \mc{D}^{\geqslant 0}$.
	\item For any object $M$ of $\mc{D}$, there exists an exact triangle $M'\rar M \rar M''$ where $M'$ is an object of $\mc{D}^{\leqslant 0}$ and $M''$ is an object of $\mc{D}^{\geqslant 0}[-1]$.
\end{itemize}

A stable category endowed with a t-structure is called a t-category. If $\mc{D}$ is a t-category, we denote by $\mc{D}^\heart=\mc{D}^{\geqslant 0} \cap \mc{D}^{\leqslant 0}$ the heart of the t-structure which is an abelian category and by $\mc{D}^b$ the full subcategory of bounded objects which is a stable t-category.

We will extensively use the results and notations of \cite{em}. We also freely use the language of \cite{tcmm}. Their definitions and results are formulated in the setup of triangulated categories but can readily be adapted to our framework following the ideas of \cite{anabelian,khan,rob,agv}.

If $\mr{D}$ is a premotivic category, for any scheme $S$, we denote by $\un_S$ the unit object of the monoidal category $\mr{D}(S)$.

If $S$ is a scheme and $R$ is a ring, we denote by $\Shv(S_{\et},R)$ the category of étale hypersheaves on the small étale site $S_{\et}$ with value in the derived category of $R$-modules and by $\Sh(S,R)$ its heart which is the category of sheaves of $R$-modules over $S_{\et}$. Furthermore, we denote by $\Shv_{\et}(S,R)$ the category of étale hypersheaves over $\Sm_S$ with value in the derived category of $R$-modules and by $\Sh_{\et}(S,R)$ its heart which is the category of étale sheaves of $R$-modules over $\Sm_S$

There are several models for the category of étale motives. We will use Ayoub's model developped in \cite{ayo07}. If $R$ is a ring, we denote by $\DA_{\et}^{\eff}(S,R)$ the stable subcategory of $\Shv_{\et}(S,R)$ made of its $\AAA$-local objects. Then, the stable category $\DM_{\et}(S,R)$ of \emph{étale motives} over $S$ with coefficients in $R$ is the $\mb{P}^1$-stabilization of the stable category $\DA_{\et}^{\eff}(S,R)$.

Let $S$ be a scheme. Recall that the h-topology is the topology on the category of schemes of finite type over $S$ whose covers are universal topological epimorphisms. The premotivic category of h-motives defined in \cite[5.1.1]{em} is equivalent over noetherian schemes of finite dimension to the premotivic catgeory of étale motives (\cite[5.5.7]{em} applies in this generality, replacing \cite[4.1]{ayo14} by \cite[3.2]{bachmanrigidity} in the proof). Therefore, étale motives satisfy h-descent and we can use the results proved in \cite{em} in the setting of h-motives to prove statements about étale motives.
Let $i\colon Z\rar X$ be a closed immersion and $j\colon U\rar X$ be the complementary open immersion. We call localization triangles the exact triangles of functors: 
\begin{equation}\label{AM.localization}j_!j^*\rar Id \rar i_*i^*.
\end{equation}
\begin{equation}\label{AM.colocalization}i_!i^!\rar Id \rar j_*j^*.
\end{equation}

Let $S$ be a scheme. If $f\colon X\rar S$ is a morphism of finite type, we let $M_S(X)=f_! f^! \mathbbm{1}_S,$ $M_S^{\BM}(X,R)=f_!\un_X$ and $h_S(X,R)=f_*\un_X.$
Again, when the ring $R$ is non-ambiguous, those functors will simply be denoted by $M_S^{\BM}$ and $h_S$.

\textbf{All rings are assumed to be commutative and noetherian}. A ring is said to be \emph{good} if it is either a localization of the ring of integers of a number field or noetherian and of positive characteristic.

If $S$ is a scheme. A \emph{stratification} of $S$ is a partition of $S$ into non-empty equidimensional locally closed subschemes called \emph{strata} such that the topological closure of any stratum is a union strata.

If $S$ is a scheme and $\xi$ is a geometric point of $S$, we denote by $\pi_1^{\et}(S,\xi)$ the étale fundamental group of $S$ with base point $\xi$ defined in \cite{sga1}.

If $x$ is a point of a scheme $S$, we denote by $k(x)$ the residue field of $S$ at the point $x$.

If $k$ is a field, we denote by $G_k$ its absolute Galois group.

We denote by $\Z$ the ring of integers and by $\Q$ the field of rational numbers. If $p$ is a prime number, we denote by $\mb{F}_p$ the field with $p$ elements, by $\mb{Z}_p$ the ring of $p$-adic integers, by $\Q_p$ the field of $p$-adic numbers and by $\overline{\Q}_p$ the latter's algebraic closure. Finally, if $\pp$ is a prime ideal of the ring $\Z$ we denote by $\Z_\pp$ the localization of the ring $\Z$ with respect to $\pp$.

If $R$ is a localization of the ring of integers of a number field $K$ and if $v$ is a valuation on $K$, we denote by $R_v$ the completion of $R$ with respect to $v$. If the valuation $v$ is non-negative on $R$, we denote by $R_{(v)}$ the localization of $R$ with respect to the prime ideal $\{ x\in R\mid v(x)>0\}$.

\begin{remark*} The assumption that all schemes are noetherian and of finite dimension is absolutely essential in the proof of the continuity property and of the absolute purity property (among others) for étale motives. In the case of Nisnevich motives, one can have the continuity over quasi-compact quasi-separated schemes (see \cite[C]{hoy}). In fact, as over noetherian schemes of finite dimension, étale and Nisnevich motives with rational coefficients coincide, some of our results can be extended to this setting.
\end{remark*}

\section{Preliminaries}

\subsection{Categorical Preliminaries}
\begin{definition}\label{thickloc} Let $\mc{C}$ be a stable category 
\begin{enumerate}\item A \emph{thick} subcategory of $\mc{C}$ is a full subcategory $\mc{D}$ of $\mc{C}$ which is closed under finite limits, finite colimits and retracts.
\item A \emph{localizing} subcategory of $\mc{C}$ is a full subcategory $\mc{D}$ of $\mc{C}$ which is closed under finite limits and arbitrary colimits.
\item Let $\mc{E}$ be a set of objects. We call \emph{thick} (resp. \emph{localizing}) \emph{subcategory generated by} $\mc{E}$ the smallest thick (resp. localizing) subcategory of $\mc{C}$ whose set of objects contains $\mc{E}$.
	\end{enumerate}
\end{definition}

\subsubsection{Complements on t-categories}
We first recall the following definition from \cite[1.3.19]{bbd} that we will use constantly.
\begin{definition}
Let $\mc{D}$ be a t-category and $\mc{D}'$ be a full stable subcategory of $\mc{D}$. If  $(\mc{D}^{\leqslant 0} \cap \mc{D}', \mc{D}^{\geqslant 0} \cap \mc{D}')$ defines a t-structure on $\mc{D}'$, we say that $\mc{D'}$ is a \emph{sub-t-category} of $\mc{D}$, that the t-structure of $\mc{D}$ \emph{induces a t-structure} on $\mc{D}'$ and call the latter the \emph{induced t-structure}.
\end{definition}
The following lemma will allow us to define the ordinary t-structure.
\begin{proposition}(\cite[1.4.4.11]{ha}) \label{AM.t-structure generated} Let $\mc{C}$ be a presentable stable category. Given a small family $\mc{E}$ of objects, the smallest subcategory $\mc{E}_-$ closed under small colimits and extensions is the set of non-positive objects of a t-structure.
\end{proposition}

In this setting, we will call this t-structure the \emph{t-structure generated by} $\mc{E}$. When a set $\mc{E}$ of objects has a small set of isomorphism classes, we still call \emph{t-structure generated by} $\mc{E}$ the t-structure generated by a small family of representatives of $\mc{E}$.

\begin{proposition}\label{generatorze} Let $\mc{C}$ be a category endowed with a Grothendieck topology $\tau$ and let $\Lambda$ be a ring. Denote by $\Shv_\tau(\mc{C},\Lambda)$ the t-category of $\Lambda$-modules in the category of $\tau$-hypersheaves of spectra and denote by $$\Lambda(-)\colon \mc{C} \rar \Shv_\tau(\mc{C},\Lambda)$$ the $\tau$-hypersheafification composed with the Yoneda functor. Then, 
\begin{enumerate}
    \item The category $\Shv_\tau(\mc{C},\Lambda)$ is generated as a localizing subcategory of itself by the sheaves of the form $\Lambda(X)$ where $X$ runs through the set of objects of $\mc{C}$.
    \item Assume that $\mc{C}$ has a small set of isomorphism classes. Then, the t-structure on $\Shv_\tau(\mc{C},\Lambda)$ is generated by the sheaves of the form $\Lambda(X)$ where $X$ runs through the set of objects of $\mc{C}$.
\end{enumerate}
\end{proposition}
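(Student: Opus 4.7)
For part (1), the argument rests on Yoneda together with the adjoint functor theorem. The functor $\Lambda(-)$ is the composite of the Yoneda embedding, hypersheafification, and the free $\Lambda$-module functor, so one has a natural equivalence
\[ \Map_{\Shv_\tau(\mc{C},\Lambda)}(\Lambda(X), F) \simeq F(X) \]
for every $X \in \mc{C}$ and every $F \in \Shv_\tau(\mc{C},\Lambda)$. Let $\mc{D}_0$ denote the localizing subcategory generated by $\{\Lambda(X) : X \in \mc{C}\}$; it is presentable, and the inclusion $\iota \colon \mc{D}_0 \hookrightarrow \Shv_\tau(\mc{C},\Lambda)$ preserves all colimits, so admits a right adjoint $R$. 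For any $F$, the cofiber $F'$ of the counit $\iota R F \to F$ lies in the right orthogonal of $\mc{D}_0$; by the Yoneda identity, $F'(X) \simeq 0$ for every $X$, and since a hypersheaf is determined by its values one has $F' \simeq 0$, whence $F \simeq \iota R F$ lies in $\mc{D}_0$.

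For part (2), I would compare the t-structure $t$ generated by $\{\Lambda(X) : X \in \mc{C}\}$ (which exists by \Cref{t-structure generated}) with the canonical t-structure on $\Shv_\tau(\mc{C},\Lambda)$. By uniqueness of truncations, two t-structures on a stable category coincide as soon as one verifies the mutual inclusions of non-positive and strictly positive parts. The inclusion $\mc{D}^{\leqslant 0}_t \subseteq \Shv_\tau(\mc{C},\Lambda)^{\leqslant 0}$ is immediate: every $\Lambda(X)$ lies in the heart $\Sh_\tau(\mc{C},\Lambda)$ of the canonical t-structure, and $\Shv_\tau(\mc{C},\Lambda)^{\leqslant 0}$ is closed under all colimits (its truncation being a left adjoint) and under extensions, hence contains the smallest such subcategory generated by the $\Lambda(X)$.

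For the converse inclusion $\mc{D}^{\geqslant 1}_t \subseteq \Shv_\tau(\mc{C},\Lambda)^{\geqslant 1}$, suppose $N \in \mc{D}^{\geqslant 1}_t$. The universal property of the generated t-structure, combined with the Yoneda identity $\Map(\Lambda(X), N) \simeq N(X)$, implies that $H^i N(X) = 0$ for every $X \in \mc{C}$ and every $i \leqslant 0$. The cohomology sheaves of $N$ for the canonical t-structure are the $\tau$-sheafifications of the presheaves $X \mapsto H^i N(X)$, and these presheaves already vanish pointwise; therefore the sheafifications vanish in degrees $\leqslant 0$, and $N$ lies in $\Shv_\tau(\mc{C},\Lambda)^{\geqslant 1}$.

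The only substantive input is the Yoneda identity $\Map(\Lambda(X), F) \simeq F(X)$; once it is in hand, both parts of the statement reduce to formal manipulations with presentable $\infty$-categories and t-structures, so there is no real obstacle. The single point of care is to ensure a genuinely small set of generators, which is guaranteed by the essential smallness hypothesis of part (2) and is tacit in part (1) via the site presenting $\Shv_\tau(\mc{C},\Lambda)$.
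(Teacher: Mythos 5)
Your proof is correct, and for the first assertion it takes a genuinely different route from the paper. The paper reduces the whole proposition to presheaves via the observation that the hypersheafification functor exhibits $\Shv_\tau(\mc{C},\Lambda)$ as a localization of $\PShv(\mc{C},\Lambda)$ (and that every t-non-positive sheaf is the sheafification of a t-non-positive presheaf), then quotes Neeman's result that a presentable stable category compactly generated by a set of objects is the localizing subcategory they generate. You instead argue entirely within $\Shv_\tau(\mc{C},\Lambda)$: present the localizing subcategory $\mc{D}_0$, produce the right adjoint $R$ to the inclusion via the adjoint functor theorem, and observe that the cofiber $F'$ of the counit $\iota R F \to F$ is right orthogonal to $\mc{D}_0$, hence has $F'(X)\simeq 0$ for all $X$ by the Yoneda identity, hence vanishes. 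This buys you a self-contained argument that bypasses the structure theory of $\PShv$ and Neeman's theorem at the cost of invoking presentability of $\mc{D}_0$ and the adjoint functor theorem; both inputs are standard, and your closing remark correctly flags the smallness hypothesis they rely on. For the second assertion your argument is in the same spirit as the paper's (Yoneda translates generation into a pointwise connectivity condition), but you check both inclusions $\mc{D}_t^{\leqslant 0}\subseteq\Shv_\tau^{\leqslant 0}$ and $\mc{D}_t^{\geqslant 1}\subseteq\Shv_\tau^{\geqslant 1}$ and then invoke uniqueness of truncations, whereas the paper more directly shows the two $\geqslant$-parts coincide after passing to presheaves. Your version uses the slightly heavier facts that $\Lambda(X)$ sits in the heart of the sheaf t-structure and that the cohomology sheaves of $N$ are sheafifications of the pointwise cohomology presheaves; for the hypercomplete t-structure these are standard and the argument goes through, but the paper's reduction to presheaves sidesteps them.
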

\begin{proof} Since $\Shv_\tau(\mc{C},\Lambda)$ is a localization of the category $\PShv(\mc{C},\Lambda)$ of presheaves of $\Lambda$-modules and since every t-non-positive sheaf is the sheafification of a t-non-positive presheaf it suffices to prove the analogous result for the category of presheaves. 

The category $\PShv(\mc{C},\Lambda)$ is compactly generated by the sheaves of the form $\Lambda(X)$ where $X$ runs through the set of objects of $\mc{C}$. The first assertion then follows from \cite[8.4.1]{neeman}.

We now prove the second assertion. A presheaf $M$ is t-positive with respect to the ordinary t-structure on $\PShv(\mc{C},\Lambda)$ if and only if for object $X$ of $\mc{C}$, the complex $M(X)$ of $\Lambda$-modules is $0$-connected. But since for any object $X$ of $\mc{C}$, we have $$M(X)=\Map_{\PShv(\mc{C},\Lambda)}(\Lambda(X),M),$$
this is equivalent to the condition that $M$ is t-positive with respect to the t-structure generated by the sheaves of the form $\Lambda(X)$ with $X$ an object of $\mc{C}$. Hence, both t-structures coincide.  
\end{proof}

The following lemma shows how t-structures generated by a set of objects behave with respect to change of coefficients.

\begin{lemma}\label{coeff change} Let $\mc{C}$ be $\Z$-linear presentable stable category, let $\mc{E}$ be a small family of objects of $\mc{C}$, let $n$ be an integer and let $A$ be a localization of $\Z$. Denote by $\sigma_n\colon \Z\rar \Z/n\Z$ and by $\sigma_A\colon \Z\rar A$ the natural ring morphisms.

If $R=\Z/n\Z$ or $R=A$ and if $\sigma\colon \Z\rar R$ is the natural ring morphism, we denote by $$\sigma^*\colon \mc{C} \leftrightarrows \mc{C}\otimes_{\mathrm{Mod}_\Z} \mathrm{Mod}_{R}\colon \sigma_*$$ the adjunction given by \cite[4.6.2.17]{ha}. 
Denote by $\mc{E}_R$ the family of objects of $\mc{C}\otimes_{\mathrm{Mod}_\Z} \mathrm{Mod}_{R}$ made of the $\sigma^*M$.

Endow $\mc{C}$ (resp. $\mc{C}\otimes_{\mathrm{Mod}_\Z} \mathrm{Mod}_{\Z/n\Z}$, resp. $\mc{C}\otimes_{\mathrm{Mod}_\Z} \mathrm{Mod}_{A}$) with the t-structure generated by $\mc{E}$ (resp. $\mc{E}_{Z/n\Z}$, resp. $\mc{E}_A$) in the sense of \Cref{AM.t-structure generated}. 

\begin{enumerate}\item The functors of the form $\sigma_*$ are t-exact.
\item Assume furthermore that if $M$ belongs to $\mc{E}$ and $N$ is an object of $\mc{C}$, the map $$\sigma_A^*\Map_{\mc{C}}(M,N)\rar \Map_{\mc{C}}(M,\sigma_A^*N)$$ is an equivalence. Then, the functor $\sigma_A^*$ is t-exact.
\end{enumerate}    
\end{lemma}
\begin{proof}
Let $R=\Z/n\Z$ or $R=A$ and let $\sigma\colon \Z\rar R$ be the natural ring morphism. The colimit preserving exact functor $\sigma^*$ sends the generators of the t-structure to t-non-positive objects and is therefore right t-exact. By adjunction, the functor $\sigma_*$ is therefore left t-exact.

Notice then that if $M$ lies in $\mc{E}$, the object $\sigma_n^*M$ is the cofiber of the map $M\overset{\times n}{\rar} M.$
Therefore the object $\sigma_n^*M$ is a colimit of t-non-positive objects of $\mc{C}$ and is therefore t-non-positive. 

Since the subcategory of t-non-positive objects of the stable category $\mc{C}\otimes_{\mathrm{Mod}_\Z} \mathrm{Mod}_{\Z/n\Z}$ is the smallest subcategory closed under small colimits and extensions containing the $M\otimes_\Z \Z/n\Z$ when $M$ runs through $\mc{E}$, and since the exact functor $(\sigma_n)_*$ preserves small colimits, the functor $(\sigma_n)_*$ is therefore right t-exact. 

On the other hand, write $A=\Sigma^{-1}\Z$ with $\Sigma$ a multiplicative system. Then, the object $\sigma_A^*M$ is the colimit of the diagram of value $M$ with transition maps the multiplications by $s$ for $s \in \Sigma$. Hence, it is t-non-positive. Since the functor $(\sigma_A)_*$ is also colimit preserving, this yields its t-exactness.

To prove the second assertion, it suffices to show that the exact functor $\sigma_A^*$ is left-t-exact under our assumption. Take $N$ a t-non-negative object of $\mc{C}$. Let $M$ be an object of $\mc{E}$. Then, we have $$\Map_{\mc{C}\otimes_{\mathrm{Mod}_\Z} \mathrm{Mod}_A}(\sigma_A^*M,\sigma_A^*N)=\Map(M,\sigma_A^*N)$$

But, by assumption, we have $$\Map(M,\sigma_A^*N)=\sigma_A^*\Map(M,N).$$

Thus, the chain complex $\Map(\sigma_A^*M,\sigma_A^*N)$ is $(-1)$-connected and the object $\sigma_A^*(N)$ is t-non-negative. Therefore, the functor $\sigma_A^*$ is t-exact.
\end{proof}

In our study, we will need the following:
\begin{lemma}\label{cons family of t-ex functors} Let $I$ be a small set, let $\mc{D}$ be a t-category. For any index $i$ in $I$, let $\mc{D}_i$ be a t-category and let $T_i\colon \mc{D}\rar \mc{D}_i$ be a t-exact functor. Assume that the family $(T_i)_{i \in I}$ is conservative. 

Then, an object $M$ of $\mc{D}$ is t-positive (resp. t-negative) if and only if for any index $i$ in $I$, the object $T_i(M)$ is t-positive (resp. t-negative) in $\mc{D}_i$.
\end{lemma}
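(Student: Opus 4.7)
The plan is to handle the ``t-positive'' and ``t-negative'' statements in parallel using the standard truncation triangles, and to use the t-exactness of each $T_i$ to commute truncation with $T_i$, then use conservativity to descend a vanishing statement from the $T_i(M)$ to $M$ itself.

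First, I would dispose of the easy direction: if $M$ lies in $\mc{D}^{\geqslant 0}$ (respectively $\mc{D}^{\leqslant 0}$), then since each $T_i$ is t-exact it sends $\mc{D}^{\geqslant 0}$ into $\mc{D}_i^{\geqslant 0}$ (respectively $\mc{D}^{\leqslant 0}$ into $\mc{D}_i^{\leqslant 0}$), so each $T_i(M)$ lies in the prescribed half of the t-structure on $\mc{D}_i$. This uses nothing beyond the definition of t-exactness.

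For the converse, I would argue for t-positivity as follows. Consider the truncation triangle
$$\tau^{\leqslant -1} M \longrightarrow M \longrightarrow \tau^{\geqslant 0} M$$
in $\mc{D}$. Applying the t-exact functor $T_i$ and using that t-exact functors commute (up to canonical equivalence) with both truncations, we obtain an exact triangle
$$\tau^{\leqslant -1} T_i(M) \longrightarrow T_i(M) \longrightarrow \tau^{\geqslant 0} T_i(M)$$
in $\mc{D}_i$. By hypothesis $T_i(M)$ lies in $\mc{D}_i^{\geqslant 0}$, so $\tau^{\leqslant -1} T_i(M) = 0$, i.e.\ $T_i(\tau^{\leqslant -1} M) = 0$ for all $i \in I$. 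Conservativity of the family $(T_i)_{i \in I}$ then forces $\tau^{\leqslant -1} M = 0$, which is exactly the statement that $M$ lies in $\mc{D}^{\geqslant 0}$. The argument for t-negativity is entirely symmetric: apply the functors $T_i$ to the triangle
$$\tau^{\leqslant 0} M \longrightarrow M \longrightarrow \tau^{\geqslant 1} M$$
and use conservativity to conclude $\tau^{\geqslant 1} M = 0$.

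There is no real obstacle here; the only point worth checking is that t-exactness of $T_i$ is enough to ensure $T_i \circ \tau^{\leqslant n} \simeq \tau^{\leqslant n} \circ T_i$ (and similarly for $\tau^{\geqslant n}$), which is standard and follows from the universal property of truncation combined with the preservation of $\mc{D}^{\leqslant 0}$ and $\mc{D}^{\geqslant 0}$ by $T_i$.
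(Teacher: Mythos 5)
Your argument is correct and takes essentially the same route as the paper: both proofs reduce to the fact that a t-exact functor commutes with truncation and then invoke conservativity of the family $(T_i)$ to descend; the paper applies conservativity to the map $M \rar \tau_{>0}M$ while you apply it to the vanishing of $\tau^{\leqslant -1}M$, which is the same argument phrased through the cofiber rather than the comparison map.
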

\begin{proof} The t-exactness of the $T_i$ yields the "only if" part of the lemma. 

We now prove the "if" part. Let $M$ be an object of $\mc{D}$ such that for any $i\in I$, the object $T_i(M)$ of $\mc{D}_i$ is t-positive. Then, for any $i\in I$, the functor $T_i$ sends the map  $M \rar \tau_{>0}M$ to an equivalence since t-exact functors commute with the positive truncation. Therefore, the map $M \rar \tau_{>0}M$ is an equivalence and $M$ is t-positive. The t-negative case is similar.
\end{proof}

\subsubsection{Idempotent Complete $\mathfrak{p}$-localization}\label{p-loc}
We recall some notions from \cite[B]{em}.

\begin{definition} Let $\mc{C}$ be a $\Z$-linear stable category and let $\mathfrak{p}$ be a prime ideal of $\Z$. 
\begin{enumerate} \item The \emph{naive $\mathfrak{p}$-localization} $\mc{C}_\mathfrak{p}^\mathrm{na}$ of $\mc{C}$ is the category with the same objects as $\mc{C}$ and such that if $M$ and $N$ are objects of $\mc{C}$, we have $$\Map_{\mc{C}_\mathfrak{p}^\mathrm{na}}(M,N)=\Map_\mc{C}(M,N)\otimes_\Z \Z_{\mathfrak{p}}.$$
\item The $\mathfrak{p}$-localization $\mc{C}_{\mathfrak{p}}$ of $\mc{C}$ is the idempotent completion of the naive $\mathfrak{p}$-localization $\mc{C}_\pp^{\mathrm{na}}$ of $\mc{C}$.
\end{enumerate} 
\end{definition}
If $\mc{C}$ is a $\Z$-linear stable category and if $\mathfrak{p}$ is a prime ideal of $\Z$, we have a canonical exact functor $$\mc{C}\rar \mc{C}_{\mathfrak{p}}.$$

\begin{proposition}\label{p-loc 1} Let $\mc{C}$ be a $\Z$-linear stable category and let $\mathfrak{p}$ be a prime ideal of $\Z$. 
\begin{enumerate}
    \item The category $\mc{C}_\mathfrak{p}$ is universal among the $\Z_\mathfrak{p}$-linear idempotent complete stable categories under $\mc{C}$.
    \item Assume that the stable category $\mc{C}$ is endowed with a t-structure $(\mc{C}^{\leqslant 0},\mc{C}^{\geqslant 0})$. 
    \begin{enumerate}
        \item Let $\mc{C}^{\leqslant 0,\na}_\mathfrak{p}$ (resp. $\mc{C}^{\geqslant 0,\na}_\mathfrak{p}$) be the full subcategory of $\mc{C}_\mathfrak{p}^\na$ made of those objects lying in the essential image of the functor $\mc{C}^{\leqslant 0}\rar \mc{C}_{\mathfrak{p}}^\na$ 
        (resp. of the functor $\mc{C}^{\geqslant 0}\rar \mc{C}_{\mathfrak{p}}^\na$). Then, the pair $(\mc{C}^{\leqslant 0,\na}_{\mathfrak{p}},\mc{C}^{\geqslant 0,\na}_{\mathfrak{p}})$ defines a t-structure on the stable category $\mc{C}_{\mathfrak{p}}^\na$ which is the unique t-structure such that the canonical functor $\mc{C}\rar \mc{C}_{\mathfrak{p}}^\na$ is t-exact.
        \item Let $\mc{C}^{\leqslant 0}_\mathfrak{p}$ (resp. $\mc{C}^{\geqslant 0}_\mathfrak{p}$) be the full subcategory of $\mc{C}_\mathfrak{p}$ made of those objects which are retract of objects of  $\mc{C}^{\leqslant 0,\na}$ (resp. $\mc{C}^{\geqslant 0,\na}\rar \mc{C}_{\mathfrak{p}}$). Then, the pair $(\mc{C}^{\leqslant 0}_{\mathfrak{p}},\mc{C}^{\geqslant 0}_{\mathfrak{p}})$ defines a t-structure on the stable category $\mc{C}_{\mathfrak{p}}$ which is the unique t-structure such that the canonical functor $\mc{C}\rar \mc{C}_{\mathfrak{p}}$ is t-exact.
    \end{enumerate} 
\end{enumerate} 
\end{proposition}
\begin{proof} The first assertion follows from the proof of \cite[B.1.6]{em}. The second assertion follows from the proofs of \cite[B.2.1]{em} and \cite[B.2.2]{em}.
\end{proof}

If $\mc{C}$ and $\mc{D}$ are $\Z$-linear stable categories, if $F\colon \mc{C}\rar \mc{D}$ is an exact functor and if $\mathfrak{p}$ is a prime ideal of $\Z$, we have canonical exact functors 
$$F_\mathfrak{p}^\na\colon \mc{C}_\mathfrak{p}^\na\rar \mc{D}_\mathfrak{p}^\na.$$
$$F_\mathfrak{p}\colon \mc{C}_\mathfrak{p}\rar \mc{D}_\mathfrak{p}.$$

\begin{proposition}\label{p-loc 2} Let $\mc{C}$ and $\mc{D}$ be $\Z$-linear stable categories, let $F\colon\mc{C}\rar \mc{D}$ be an exact functor and let $\mathfrak{p}$ be a prime ideal of $\Z$. 
\begin{enumerate}
    %\item If the functor $F$ preserves small colimits, so does the functor $F_{\mathfrak{p}}$.
    \item If the functor $F$ is fully faithful, so are the functors $F_\pp^\na$ and $F_{\mathfrak{p}}$.
    \item If the stable categories $\mc{C}$ and $\mc{D}$ are endowed with t-structures and if the functor $F$ is right (or left) t-exact, so are the functors $F_\pp^\na$ and $F_{\mathfrak{p}}$ when the stable categories $\mc{C}_{\mathfrak{p}}^\na$, $\mc{D}_{\mathfrak{p}}^\na$, $\mc{C}_{\mathfrak{p}}$ and $\mc{D}_{\mathfrak{p}}$ are endowed with the t-structures defined in \Cref{p-loc 1}.
\end{enumerate}
\end{proposition}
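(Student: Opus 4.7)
All three statements share a common two-step strategy: verify the assertion at the level of the naive $\pp$-localization $\mc{C}^{\mathrm{na}}_\pp$, where it reduces to a statement about Hom-spaces tensored with the flat $\Z$-module $\Z_\pp$, and then extend the conclusion to the idempotent completion $\mc{C}_\pp$ by invoking appropriate closure properties under retracts.

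Part (2) is the cleanest. Full faithfulness of $F$ asserts that $\Map_\mc{C}(M,N) \rar \Map_\mc{D}(F(M),F(N))$ is an equivalence for all $M,N$ in $\mc{C}$. Tensoring with $\Z_\pp$ preserves this equivalence, and by the very definition of Hom-spaces in the naive localization it says that $F^{\mathrm{na}}_\pp$ is fully faithful. Passing to the idempotent completion preserves full faithfulness: the Hom-space between a pair of idempotent-splittings $(M,e)$ and $(N,f)$ is the image in $\Map(M,N)$ of the idempotent induced by $e$ and $f$, and this is preserved by any fully faithful functor. For part (3), \Cref{p-loc 1}(2) shows that $\mc{C}_\pp^{\leqslant 0}$ is the closure under retracts of the image of $\mc{C}^{\leqslant 0}$ in $\mc{C}_\pp$, and similarly for $\mc{D}$. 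Right t-exactness of $F$ gives $F(\mc{C}^{\leqslant 0})\subseteq \mc{D}^{\leqslant 0}$, hence $F_\pp$ sends the image of $\mc{C}^{\leqslant 0}$ into the image of $\mc{D}^{\leqslant 0}$; since $F_\pp$ is exact and in particular preserves retracts, it carries $\mc{C}_\pp^{\leqslant 0}$ into $\mc{D}_\pp^{\leqslant 0}$. Left t-exactness is obtained by the symmetric argument applied to the non-negative parts of the t-structures.

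The most delicate statement is (1). I would first establish that the canonical localization functor $\mc{C}\rar \mc{C}_\pp$ itself preserves small colimits: since the Hom-spaces in $\mc{C}^{\mathrm{na}}_\pp$ are obtained by tensoring with the flat module $\Z_\pp$, finite colimits computed in $\mc{C}$ remain colimits in $\mc{C}^{\mathrm{na}}_\pp$, and filtered colimits are handled via the fact that $-\otimes_\Z \Z_\pp$ commutes with filtered colimits of spectra and that idempotent completion preserves existing colimits in a stable setting. Granting this, the essential image of $\mc{C}$ generates $\mc{C}_\pp$ under small colimits and retracts, so preservation of colimits by $F_\pp$ reduces, via the commutative square relating the localization functors, to the colimit preservation of $F$ and of $\mc{D}\rar \mc{D}_\pp$. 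The main obstacle is precisely this first step: making the colimit preservation of the localization functor rigorous requires some $\infty$-categorical bookkeeping around the presentability of $\mc{C}_\pp$ and the interplay between naive $\pp$-localization and colimits, but it is essentially formal once one unwinds the construction used in the proof of \Cref{p-loc 1}.
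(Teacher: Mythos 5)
Your treatment of parts (2) and (3) is correct and essentially parallel to the paper's, with one small difference in style: for (2) the paper invokes the fact that the left Kan extension of a fully faithful functor is fully faithful (after identifying $\mc{C}_\pp$ with a subcategory of $\mathrm{Ind}(\mc{C}^{\mathrm{na}}_\pp)$), whereas you argue directly with the explicit description of mapping spaces in an idempotent completion. Both are fine. For (3) the paper also uses the retract characterization of the t-structure from \Cref{p-loc 1}, exactly as you do.

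Part (1) is where your argument has a genuine gap, and you already sense this but underestimate it. Two separate problems. First, your justification for the claim that $\mc{C}\rar\mc{C}^{\mathrm{na}}_\pp$ preserves filtered colimits does not work: verifying that $\colim_i M_i$ remains a colimit after $\pp$-localization requires the comparison map $\left(\lim_i \Map_\mc{C}(M_i,N)\right)\otimes_\Z\Z_\pp \rar \lim_i\left(\Map_\mc{C}(M_i,N)\otimes_\Z\Z_\pp\right)$ to be an equivalence, and this is a commutation of $-\otimes_\Z\Z_\pp$ with a \emph{cofiltered limit}, not a filtered colimit --- flatness of $\Z_\pp$ does not give you this, so the claim cannot be deduced the way you propose. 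Second, even granting that the two localization functors preserve colimits, your reduction step is not a valid argument: knowing that $\mc{C}_\pp$ is generated under small colimits and retracts by the essential image of $\mc{C}$, and that $F_\pp$ behaves well on that image, does not by itself imply that $F_\pp$ preserves an arbitrary small colimit in $\mc{C}_\pp$, absent some adjoint functor or presentability input that the hypotheses do not supply. The paper sidesteps both issues entirely: it records that the vertical arrows $\mc{C}\rar\mc{C}^{\mathrm{na}}_\pp$ are colimit-preserving and essentially surjective (handing the naive case to the cited background), and then treats the idempotent completion by identifying $\mc{C}_\pp$ with a full subcategory of $\mathrm{Ind}(\mc{C}^{\mathrm{na}}_\pp)$ via \cite[5.4.2.4]{htt} and noting that $F_\pp$ is the restriction of a left Kan extension, which preserves colimits whenever $F^{\mathrm{na}}_\pp$ does. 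You should either adopt that route or find a genuinely different rigorous replacement; the step you flag as ``essentially formal'' is precisely the one that needs the Ind-category and Kan extension machinery.
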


\begin{proposition}\label{p-locality}(\cite[B.1.7]{em}) Let $\mc{C}$ be a $\Z$-linear stable category and let $\mc{D}$ be a thick stable subcategory of $\mc{C}$. An object $M$ of $\mc{C}$ belongs to $\mc{D}$ if and only if the image of $M$ in $\mc{C}_\mathfrak{p}$
belongs to $\mc{D}_\pp$ for any maximal ideal $\mathfrak{p}$ of $\Z$.
\end{proposition}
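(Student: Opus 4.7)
The plan is to prove this by reducing to the classical local-global principle for $\Z$-modules. The ``only if'' direction is immediate: by the universal property of $\pp$-localization (\Cref{p-loc 1}(1)) any object of $\mc{D}$ has image in $\mc{D}_\pp$ when passed through $\mc{C}\to \mc{C}_\pp$.

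For the ``if'' direction, I would first introduce the $\Z$-submodule $I\subseteq \pi_0\Map_{\mc{C}}(M,M)$ consisting of those morphisms which factor through some object of $\mc{D}$. Closure under sums follows from the stability of $\mc{D}$ (morphisms factoring through $D_1,D_2\in \mc{D}$ sum to one factoring through $D_1\oplus D_2\in \mc{D}$), and closure under scalar multiplication is clear. Setting $A:=\pi_0\Map_{\mc{C}}(M,M)/I$, the thickness of $\mc{D}$ yields the key equivalence: $M$ lies in $\mc{D}$ if and only if $\id_M$ factors through an object of $\mc{D}$, i.e., if and only if $\id_M=0$ in $A$. The task therefore reduces to showing $A=0$.

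The next step is to compute $A\otimes_\Z \Z_\pp$ inside $\mc{C}_\pp$. By the construction of $\mc{C}_\pp$ together with the full faithfulness of $\mc{C}_\pp^{\mathrm{na}}\hookrightarrow \mc{C}_\pp$, one obtains the identification $\pi_0\Map_{\mc{C}_\pp}(M,M)=\pi_0\Map_{\mc{C}}(M,M)\otimes_\Z \Z_\pp$. Moreover, the analogous submodule $I'\subseteq \pi_0\Map_{\mc{C}_\pp}(M,M)$ of morphisms factoring in $\mc{C}_\pp$ through an object of $\mc{D}$ should match $I\otimes_\Z \Z_\pp$, using flatness of $\Z_\pp$ together with the fact that any factoring $M\to D\to M$ in $\mc{C}_\pp$ can be cleared of denominators up to a unit of $\Z_\pp$. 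The hypothesis gives that $M$ lies in $\mc{D}_\pp$ inside $\mc{C}_\pp$; since every object of $\mc{D}_\pp$ is, by the very construction of the idempotent completion, a retract in $\mc{C}_\pp$ of an object of $\mc{D}$, this forces $\id_M$ to factor in $\mc{C}_\pp$ through an object of $\mc{D}$, so $\id_M\in I'$ and $A\otimes_\Z \Z_\pp=0$ for every maximal ideal $\pp$. The local-global principle for $\Z$-modules then gives $A=0$, hence $\id_M$ factors through $\mc{D}$ in $\mc{C}$, and thickness of $\mc{D}$ forces $M\in \mc{D}$.

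The main difficulty will be the bookkeeping in the third paragraph: identifying $I'$ with $I\otimes_\Z \Z_\pp$, and translating ``$M\in \mc{D}_\pp$'' into ``$\id_M$ factors through an object of $\mc{D}$ in $\mc{C}_\pp$.'' Both assertions are direct consequences of unraveling idempotent completion together with the full faithfulness in \Cref{p-loc 2}(2), but they must be treated explicitly to keep the argument airtight.
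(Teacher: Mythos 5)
Your argument is correct and complete. The paper itself does not prove this proposition but cites \cite[B.1.7]{em}; your reduction to the vanishing of the $\Z$-module $A=\pi_0\Map_{\mc{C}}(M,M)/I$, with $I$ the (two-sided) ideal of endomorphisms factoring through an object of $\mc{D}$, together with the fact that $M\in\mc{D}$ iff $\id_M\in I$ (closure under retracts) and the local-global principle for $\Z$-modules, is exactly the standard mechanism and matches the argument of the cited source. The two points you flag are indeed where the content lies, and both check out as you indicate: the full faithfulness of $\mc{C}_\pp^{\mathrm{na}}\hookrightarrow\mc{C}_\pp$ and flatness of $\Z_\pp$ give $\pi_0\Map_{\mc{C}_\pp}(M,M)=\pi_0\Map_{\mc{C}}(M,M)\otimes_\Z\Z_\pp$ and, after clearing the common denominator of a factorization $M\to D\to M$ in $\mc{C}_\pp$, the identification $I'=I\otimes_\Z\Z_\pp$; and since every object of the idempotent completion $\mc{D}_\pp$ is a retract in $\mc{C}_\pp$ of an object of $\mc{D}$ (via the fully faithful embedding of \Cref{p-loc 2}(2)), the hypothesis $M\in\mc{D}_\pp$ forces $\id_M\in I'$, whence $A\otimes_\Z\Z_\pp=0$ for all maximal $\pp$ and thus $A=0$.
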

\subsection{Stable Categories of Étale motives}

\subsubsection{Small \'Etale Site}\label{small et site} We have a chain of premotivic adjunctions
$$\begin{tikzcd}
\Shv_{\et}(-,R)\ar[r,"L_\AAA",shift left=1]& \DA_{\et}^{\eff}(-,R)\ar[r,"\Sigma^\infty",shift left=1]\ar[l,"\iota",shift left=1]& \DM_{\et}(-,R)\ar[l,"\Omega^\infty",shift left=1]
\end{tikzcd}.$$

 We can add a pair of adjoint functors to left of this diagram, which is not premotivic in general. The inclusion of sites $\rho\colon S_{\text{ét}} \rightarrow \Sm_S$ indeed provides a pair of adjoint functors
$$\rho_\#\colon \Shv(S_{\text{ét}}, R) \rightleftarrows \Shv_{\text{ét}}(\Sm_S, R)\colon \rho^*$$
This extends to a morphism between objects in $\Fun(\mathrm{Sch}^\text{op}, \CAlg(\PrL_\Stb))$.

%We can add the category $\Shv(-_{\et},R)$ at the left of the above diagram using the adjunction: $$\rho_\#\colon \Shv(-_{\et},R)\leftrightarrows \Shv_{\et}(-,R)\colon \rho^*$$
%given by the inclusion of the small étale site into the big étale site. The functor $\rho_\#$ is fully faithful. 
We still denote by $$R_S\colon S_{\et}\rar \Shv(S_{\et},R)$$ 
the Yoneda embedding. This will be harmless as the sheaf on $S_{\et}$ representing an étale $S$-scheme $X$ is send by the functor $\rho_\#$ to the sheaf on $\Sm_S$ which represents $X$.

If $R$ is an $n$-torsion ring, where the integer $n$ is invertible on $S$, then the functor $L_{\AAA}\rho_\#$ and therefore, the functor $\rho_!=\Sigma^\infty L_{\AAA}\rho_\#$ is a monoidal equivalence (it is the rigidity theorem \cite[3.2]{bachmanrigidity}).

\subsubsection{Change of Coefficients}\label{Change of coefficients} Let $\mr{D}$ be one of the fibered categories $\DM_{\et}$, $\DA^{\eff}_{\et}$, $\Shv_{\et}$ and $\Shv(-_{\et})$.
Let $\sigma\colon R \rar R'$ be a ring homomorphism. 

The adjunction 
$$\sigma^* \colon \mathrm{Mod}_R\leftrightarrows \mathrm{Mod}_{R'} \colon \sigma_*$$
provides a premotivic adjunction 
$$\sigma^* \colon \mr{D}(-,R)\leftrightarrows \mr{D}(-,R') \colon \sigma_*,$$  except for $\mr{D}=\Shv(-_{\et})$ where the adjunction exists but is not premotivic.

If $S$ is a scheme, we can identify $\mr{D}(S,R)$ with the category of $R$-modules in $\mr{D}(S,\Z)$. The functor $\sigma_*$ is then the forgetful functor, and the functor $\sigma^*$ is the functor $-\otimes_R R'$.

Finally, the map $\sigma\mapsto \sigma^*$ comes from the functor 
$$\mathrm{CAlg}(\mbf{Ab})\xrightarrow{H}\mathrm{CAlg}(\mathcalboondox{Sp})\xrightarrow{h(S)\otimes \id}\mathrm{CAlg}(\widehat{\Sm_S}\otimes \mathcalboondox{Sp})\xrightarrow{L_{\AAA}L_{\et}}\mathrm{CAlg}(\mc{SH}(S))\xrightarrow{\mathrm{Mod}_{-}}\CAlg(\PrL_\Stb).$$

%\subsubsection{Change of coefficients}\label{Change of coefficients}  The fibered category $\mr{D}(-,R)$ is the category of $R$-module objects in $\mr{D}(-,\Z)$. Therefore, if $\sigma\colon R\rar R'$ is a ring morphism, we can construct an adjunction $$\sigma^*\colon \mr{D}(-,R)\leftrightarrows \mr{D}(-,R')\colon \sigma_*$$ which is $\Sm$-premotivic except in the case of $\Shv(-_{\et})$ (see \cite[5.4]{em}). The functor $\sigma_*$ is the forgetful functor and the functor $\sigma^*$ is the tensor product with $R'$.

\begin{proposition}\label{commutation} Let $\mr{D}$ be one of the fibered categories $\DM_{\et}$, $\DA^{\eff}_{\et}$ and $\Shv_{\et}$. Let $\sigma\colon R\rar R'$ be a ring morphism of the form $\sigma_A\colon R\rar R\otimes_\Z A$ or $\sigma_n\colon R\rar R/nR$, where $A$ is a localization of $\Z$ and $n$ is an integer.
\begin{enumerate}
    \item The functor $\sigma^*$ commutes with the functors $\rho_\#$, $\rho^*$, $L_{\AAA}$, $\iota$, $\Sigma^\infty$ and $\Omega^\infty$ defined in \Cref{small et site}.
    \item Let $n$ be an integer, let $S$ be a scheme and let $M$ and $N$ be objects of $\mr{D}(S,R)$. Then, the canonical maps
$$\sigma_n^*\Hom_{\mr{D}(S,R)}(M,N) \rar \Hom_{\mr{D}(S,R)}(M,\sigma_n^*N)$$
$$\sigma_n^*\Map_{\mr{D}(S,R)}(M,N) \rar \Map_{\mr{D}(S,R)}(M,\sigma_n^*N)$$
are equivalences.
    \item Let $A$ be a localization of $\Z$, let $S$ be a scheme and let $M$ and $N$ be objects of $\mr{D}(S,R)$. Assume that $M$ is constructible in the sense that it is in the thick subcategory generated by representable objects (and Tate twist in the case of non-effective étale motives). Then, the canonical maps
$$\sigma_A^*\Hom_{\mr{D}(S,R)}(M,N) \rar \Hom_{\mr{D}(S,R)}(M,\sigma_A^*N)$$
$$\sigma_A^*\Map_{\mr{D}(S,R)}(M,N) \rar \Map_{\mr{D}(S,R)}(M,\sigma_A^*N)$$
are equivalences.
    \item If $\mr{D}$ is one of the categories $\DM_{\et}$, $\DA^{\eff}_{\et}$ and $\Shv_{\et}$, the functor $\sigma^*$ commute with the six functors formalism.
\end{enumerate}
\end{proposition}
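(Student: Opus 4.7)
The plan is to prove the four items in order, exploiting the following guiding principles: $\sigma^*$ is an exact monoidal left adjoint whose right adjoint $\sigma_*$ is the forgetful functor on underlying sheaves/spectra; while (2) and (3) additionally require, respectively, the perfectness of $R/nR$ as an $R$-module and the compactness of constructible objects.

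For (1), the right adjoints $\rho^*$, $\iota$, $\Omega^\infty$ of $\rho_\#$, $L_{\AAA}$, $\Sigma^\infty$ are each defined by universal constructions (restriction of sections along the morphism of sites; inclusion of $\AAA$-local objects; zeroth space of a $\mb{P}^1$-spectrum) that involve only the underlying sheaves or spectra. Hence each commutes with the forgetful functor $\sigma_*$, and passing to left adjoints gives the commutation of $\sigma^*$ with $\rho_\#$, $L_{\AAA}$, $\Sigma^\infty$. For the right adjoints themselves, one verifies directly that $\sigma^*$ preserves each defining property (restriction to étale opens, $\AAA$-locality, $\Omega$-spectrum) since these are all stable under the $R$-linear base change $-\otimes_R R'$.

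For (2), the key observation is that $R/nR=\mathrm{cone}(R\xrightarrow{n}R)$ is a perfect $R$-module. The cofiber sequence $N\xrightarrow{n}N\to \sigma_n^* N$ in $\mr{D}(S,R)$ therefore yields, upon applying $\Hom_{\mr{D}(S,R)}(M,-)$, a morphism of cofiber sequences comparing with $\Hom(M,N)\xrightarrow{n}\Hom(M,N)\to\sigma_n^*\Hom(M,N)$ whose first two components are the identity; the third is thus forced to be an equivalence. The $\Map$ version follows since $\Omega^\infty$ is exact. For (3), I instead use compactness of $M$: writing $A=\Z[T^{-1}]$ as a filtered colimit of copies of $\Z$ with transition maps given by multiplication by primes in $T$, the object $\sigma_A^* N=N\otimes_\Z A$ decomposes as a filtered colimit in $\mr{D}(S,R)$, and constructibility of $M$ (which implies compactness) ensures that both $\Hom(M,-)$ and $\Map(M,-)$ commute with this colimit.

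For (4), the commutation of $\sigma^*$ with $f^*$ and $\otimes$ is part of the $\Sm$-premotivic structure of $\sigma^*$ recorded in \Cref{Change of coefficients}. Commutation with $f_!$ for $f$ of finite type follows by Nagata compactification, reducing to an open immersion (where $j_!=j_\#$ is premotivic datum) and a proper map (where $f_!=f_*$, to which proper base change applies). Commutation with the right adjoints $f_*$, $f^!$, and internal $\Hom$ is then obtained by passing to adjoints, invoking (2) and (3) to propagate the comparison from constructible $M$ to the general case by presentability. I expect the main obstacle to be (4) in the case of $f_*$ for non-proper $f$: one must combine the localization triangle, proper base change for $f_!$, and items (2)--(3) carefully; the other items are essentially formal once the relevant perfectness or compactness observation is identified.
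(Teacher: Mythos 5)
Your handling of (1) and (2) closely tracks the paper's own proof: both establish commutation of the right adjoints with the forgetful functor $\sigma_*$ and pass to the left adjoints, supplementing with separate arguments for the right adjoints; both prove (2) from the cofiber sequence $N\xrightarrow{n}N\rar\sigma_n^*N$. The framing of $R/nR$ as a perfect $R$-module is a nice gloss but does no extra work beyond the cofiber argument. The outline of (4) — premotivic structure for $f^*$, $\otimes$, $f_!$, then testing the exchange morphisms against constructible generators via (2)/(3) and conservativity of $\Map(M,-)$ — matches the paper's strategy as encoded in the references it cites.

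The real gap is the parenthetical in (3), ``constructibility of $M$ (which implies compactness).'' This is false in the generality needed. A central technical point of Cisinski–Déglise \cite{em}, emphasized in their introduction and reflected in the whole apparatus of Section~5.4 and Section~7 there, is that $\DM_{\et}(S,\Z)$ (and similarly $\DA^{\eff}_{\et}$, $\Shv_{\et}$ with integral coefficients) is \emph{not} compactly generated, and constructible objects are a thick subcategory playing the role that compacts would play but are not themselves compact. Your argument that $\Hom(M,-)$ and $\Map(M,-)$ commute with the multiplication tower $N\rar N\rar\cdots$ defining $N\otimes_{\Z}A$ therefore does not follow from a generic compactness appeal; the paper instead invokes the specific finiteness lemmas \cite[1.1.11]{em} (for the sheaf-theoretic cases, after reducing $\DA^{\eff}_{\et}$ to $\Shv_{\et}$ by noting that $-\otimes_\Z A$ preserves $\AAA$-local objects) and \cite[5.4.11]{em} (for $\DM_{\et}$). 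The same concern appears in your treatment of $\Omega^\infty$ for $\sigma_A^*$ in (1): ``$\sigma^*$ preserves $\Omega$-spectra'' quietly requires $\Omega_{\mb{P}^1}=\underline{\Hom}(\Sigma_{\mb{P}^1}\un,-)$ to commute with that same filtered colimit, i.e.\ a compactness-type claim about $\Sigma_{\mb{P}^1}\un$. The paper sidesteps it by citing \cite[5.5.5]{em} for $\sigma_A^*$ and handling $\sigma_n^*$ via the exact triangle $\id\xrightarrow{n}\id\rar\sigma_n^*$ and exactness of $\Omega^\infty$. You should either cite these results or replace the compactness appeal with a dévissage to $M=R_S(X)$ exhibiting the required finiteness directly; as written, (3), the $\Omega^\infty$ step in (1), and hence (4) are incomplete.
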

\begin{proof} The functor $\rho^*$ is the restriction to the small étale site and therefore, it commutes with the forgetful functor $\sigma_*$ and with the functor $\sigma^*$. Hence, its left adjoint $\rho_\#$ also commutes with the functor $\sigma^*$.

The functor $\iota$ is the inclusion of $\AAA$-local sheaves into the fibered category $\Shv_{\et}$. Since the $\AAA$-locality property does not depend on the ring of coefficients, the functor $\iota$ commutes with the forgetful functor $\sigma_*$. Therefore the functors $L_{\AAA}$ and $\sigma^*$ commute. Furthermore, \cite[5.4.5, 5.4.11]{em} ensure that if $M$ is $\AAA$-local, then so is $\sigma^*(M)$ which implies that $\iota$ and $\sigma^*$ commute.

Following \cite[5.4]{em} the functor $\sigma^*$ commutes with geometric sections. Since the functors $\sigma^*$ and $\Sigma^\infty$ are colimit-preserving and since by \Cref{generatorze}, the category $\DA^{\eff}_{\et}(S,R)$ is generated as a localizing subcategory of itself by geometric sections, those functors commute. The proof of \cite[5.5.5]{em} shows that the functor $\Omega^\infty$ commutes with the functor $\sigma_A^*$. Finally, we have exact triangle of functors
$$\id\overset{\times n}{\rar} \id \rar \sigma_n^*$$
in $\DM_{\et}$ and $\DA^{\eff}_{\et}$
which induce an exact triangles of functors when applying $\Omega^\infty$ to the left or to the right
$$\Omega^\infty\overset{\times n}{\rar} \Omega^\infty \rar \Omega^\infty\sigma_n^*$$
$$\Omega^\infty\overset{\times n}{\rar} \Omega^\infty \rar \sigma_n^*\Omega^\infty$$

Hence, the functors $\Omega^\infty$ and $\sigma_n^*$ commute, which finishes the proof of the first assertion.

The second assertion follows from the exactness of the triangle $$\id\overset{\times n}{\rar} \id \rar \sigma_n^*$$ in the categories $\mr{D}(S,R)$ and $\mc{D}(R)$. 

In the case where $\mr{D}=\DM_{\et}$, the third assertion follows from \cite[5.4.11]{em}. Tensoring with $A$ respects $\AAA$-local objects by \cite[1.1.11]{em}. Therefore, the third assertion in the case where $\mr{D}=\DA_{\et}^{\eff}$ follows from the same assertion when $\mr{D}=\Shv_{\et}$. The assertion in both of the cases when $\mr{D}=\Shv_{\et}$ and when $\mr{D}=\Shv(-_{\et})$ follows from \cite[1.1.11]{em}.

Using \cite[5.4.5, 5.4.11, A.1.1.16]{em}, we see that the six functors commute with $\sigma^*$ in the case of $\DM_{\et}$. We can mimic the proofs of \cite[5.4.5, 5.4.11]{em} in the other cases to prove the last assertion. 
\end{proof}
\begin{lemma}\label{conservativity}(\cite[5.4.12]{em}) Let $\mr{D}$ be one of the fibered categories $\DM_{\et}$, $\DA^{\eff}_{\et}$, $\Shv_{\et}$ and $\Shv(-_{\et})$. Let $S$ be a scheme and let $R$ be a good ring, then with the above notations, the family $(\sigma_\Q^*,\sigma_p^* \mid p \text{ prime })$ is conservative.
\end{lemma}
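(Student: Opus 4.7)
The plan is to show that any object $M$ of $\mr{D}(S,R)$ satisfying $\sigma_\Q^*(M) \simeq 0$ and $\sigma_p^*(M) \simeq 0$ for every prime $p$ must vanish. The main input is the exact triangle
$$M \xrightarrow{\times p} M \to \sigma_p^*(M)$$
in $\mr{D}(S,R)$, which is established in the proof of \Cref{commutation} and which is available because each of the four categories $\mr{D}(S,R)$ is an $R$-linear stable $\infty$-category. The vanishing of $\sigma_p^*(M)$ is therefore equivalent to multiplication by $p$ being an equivalence on $M$; requiring this for every prime $p$ forces multiplication by every nonzero integer to be an equivalence on $M$.

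I would then split into the two cases appearing in the definition of a good ring. If $R$ is noetherian of positive characteristic $n>0$, then $n\cdot 1_R=0$ in $R$, so $\times n\colon M\to M$ is the zero morphism; but the previous observation applied to the prime factorisation of $n$ shows that $\times n$ is simultaneously an equivalence on $M$, which forces $M\simeq 0$. If instead $R$ is a localization of the ring of integers of a number field, I would identify $\sigma_\Q^*(M)$ with $M\otimes_\Z^L \Q$, using that tensoring over $R$ with $R\otimes_\Z\Q$ agrees with tensoring over $\Z$ with $\Q$ on $R$-linear objects, and then compute this as the sequential colimit of the maps $M\xrightarrow{\times n}M$ for $n\geq 1$. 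Since every transition map is an equivalence by the previous paragraph, this colimit is equivalent to $M$ itself, and comparing with the hypothesis $\sigma_\Q^*(M)\simeq 0$ yields $M\simeq 0$.

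I do not expect any serious obstacle; the only points to verify carefully are the identification of the exact triangle $M\xrightarrow{\times p} M\to \sigma_p^*(M)$ and the colimit presentation of $\sigma_\Q^*$ inside each of the four fibered categories under consideration, both of which are formal consequences of \Cref{commutation} together with the fact that the categories are $R$-linear. Compared to the abelian-group analogue, where the argument is transparent at the level of cohomology groups ($p$-divisible and $p$-torsion-free for all $p$, combined with torsion, forces vanishing), the only subtlety here is to carry out the reasoning at the level of the stable category itself, which the exact triangle above permits directly.
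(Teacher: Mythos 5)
Your proof is correct. The paper itself gives no argument for this lemma, citing \cite[5.4.12]{em} instead; your reduction to showing that the zero object is the only one annihilated by all of $\sigma_\Q^*$ and $\sigma_p^*$, the use of the cofiber triangle $M \xrightarrow{\times p} M \to \sigma_p^*(M)$ (which is indeed established in the proof of \Cref{commutation}) to convert vanishing of $\sigma_p^*(M)$ into invertibility of $\times p$, and the subsequent case split on the two halves of the definition of a good ring, is the standard argument and is essentially what the cited reference does.
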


Recall the category $\DM_{\et,c}(S,R)$ of constructible motives defined as the thick subcategory generated by the $M_S(X)(n)$ for $X$ smooth over $S$.
\begin{proposition}\label{p-loc cons 1} Let $\pp$ be a prime ideal of $\Z$ and let $S$ be a scheme. Then, with the notations of \Cref{p-loc} the functor $$\DM_{\et,c}(S,R)_\pp\rar \DM_{\et,c}(S,R\otimes_\Z \Z_\pp)$$ is an equivalence.
\end{proposition}
\begin{proof} With the notations of \Cref{p-loc}, the functor $$\DM_{\et,c}(S,R)_\pp^\mathrm{na}\rar \DM_{\et,c}(S,R\otimes_\Z \Z_\pp)$$ is fully faithful by \Cref{commutation}. Since the category $\DM_{\et,c}(S,R\otimes_\Z \Z_\pp)$ is idempotent complete, the functor $$\DM_{\et,c}(S,R)_\pp\rar \DM_{\et,c}(S,R\otimes_\Z \Z_\pp)$$ is therefore fully faithful. Its essential image is therefore a thick subcategory which contains the generators of $\DM_{\et,c}(S,R\otimes_\Z \Z_\pp)$. Therefore, it is an equivalence.
\end{proof}

%All the properties of the pair $(\sigma^*,\sigma_*)$ described in \cite[5.4]{em} hold for any of the categories $\DM_{\et}$, $\DA^{\eff}_{\et}$, $\mc{D}(\Sh_{\et})$ and $\mc{D}(\Sh(-_{\et}))$. In particular, we have the following conservativity lemma:

We finish this section with an analog of \Cref{commutation}(2) for infinite torsion. If $p$ is a prime number denote by $\Z(p^\infty)=\Z[1/p]/\Z$ the Prüfer $p$-group.

\begin{proposition}\label{torsion commutation} Keep the same notations as in \Cref{commutation}. Let $S$ be a scheme and let $M$ and $P$ be objects of $\DM_{\et}(S,R)$. Assume that $M$ is constructible.

\begin{enumerate}
\item The canonical functor $$\Map_{\DM_{\et}(S,R)}(M,P)\otimes_\Z \Q/\Z \rar \Map_{\DM_{\et}(S,R)}(M,P\otimes_\Z \Q/\Z)$$ is an equivalence.
\item Let $p$ be a prime number. The canonical functor $$\Map_{\DM_{\et}(S,R)}(M,P)\otimes_\Z \Z(p^\infty) \rar \Map_{\DM_{\et}(S,R)}(M,P\otimes_\Z \Z(p^\infty))$$ is an equivalence.
\item The canonical functor $$\bigoplus \limits_{p\text{ prime}} \Map_{\DM_{\et}(S,R)}(M,P\otimes_\Z \Z(p^\infty)) \rar \Map_{\DM_{\et}(S,R)}(M,P\otimes_\Z \Q/\Z)$$ is an equivalence.
\item The canonical functor $$\colim_n \Map(M,\sigma_n^*(P)) \rar  \Map(M,\colim_n \sigma_n^*(P))$$ is an equivalence.
\item Let $p$ be a prime number. The canonical functor $$\colim_n \Map(M,\sigma_{p^n}^*(P)) \rar  \Map(M,\colim_n \sigma_{p^n}^*(P))$$ is an equivalence.
\end{enumerate}
\end{proposition}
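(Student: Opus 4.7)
The plan is to prove (4) and (5) first, and to deduce (1), (2) as reformulations and (3) from the combination of (1) and (2). The key inputs from earlier in the paper are \Cref{commutation}(2) (commutation of $\Map$ with finite torsion coefficient change, for any $M$) and \Cref{commutation}(3) (commutation of $\Map$ with rational localization, for constructible $M$). The key auxiliary observation is that the exact triangles $P \xrightarrow{\times n} P \to \sigma_n^*(P)$ in $\mr{D}(S,R)$ identify $\sigma_n^*(P)$ with the derived tensor product $P \otimes_\Z^L \Z/n\Z$, so passing to the filtered colimit along divisibility (respectively along powers of $p$) and using that $\Q/\Z = \colim_n \Z/n\Z$ and $\Z(p^\infty) = \colim_n \Z/p^n\Z$ yields cofiber sequences
$$P \rar P \otimes_\Z \Q \rar \colim_n \sigma_n^*(P), \qquad P \rar P \otimes_\Z \Z[1/p] \rar \colim_n \sigma_{p^n}^*(P)$$
in $\mr{D}(S,R)$, exhibiting $\colim_n \sigma_n^*(P) \simeq P \otimes_\Z \Q/\Z$ and $\colim_n \sigma_{p^n}^*(P) \simeq P \otimes_\Z \Z(p^\infty)$.

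For (4), I would apply $\Map(M,-)$ to the first cofiber sequence. Because $M$ is constructible, \Cref{commutation}(3) applied to $\sigma_\Q$ identifies $\Map(M, P \otimes_\Z \Q)$ with $\Map(M,P) \otimes_\Z \Q$, so the cofiber becomes $\Map(M,P) \otimes_\Z \Q/\Z$. Separately, \Cref{commutation}(2) gives $\Map(M, \sigma_n^*(P)) \simeq \Map(M,P) \otimes_\Z \Z/n\Z$, and passing to the filtered colimit over divisibility produces $\Map(M,P) \otimes_\Z \Q/\Z$ again; comparing the two identifications yields (4). Part (5) is identical after replacing $\Q$ by $\Z[1/p]$ and divisibility by powers of $p$. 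Parts (1) and (2) are then immediate reformulations, since the first step identifies $\colim_n \sigma_n^*(P)$ with $P \otimes_\Z \Q/\Z$ and analogously in the $p$-primary case; the maps in (1) and (2) coincide up to canonical equivalence with those constructed in the proofs of (4) and (5).

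For (3), I would use the abelian group decomposition $\Q/\Z = \bigoplus_p \Z(p^\infty)$ together with distributivity of the derived tensor product over direct sums: this yields compatible isomorphisms $P \otimes_\Z \Q/\Z \simeq \bigoplus_p P \otimes_\Z \Z(p^\infty)$ and $\Map(M,P) \otimes_\Z \Q/\Z \simeq \bigoplus_p \Map(M,P) \otimes_\Z \Z(p^\infty)$. Combined with the equivalences provided by (1) and (2), the map of (3) is identified with this canonical splitting and is therefore an equivalence. The subtlest point, and the one I expect to require the most care, is the first paragraph: one must check that the cofiber-sequence description of $\sigma_n^*$ is compatible with the transition maps (multiplication by $m/n$ when $n \mid m$) of the divisibility colimit diagram, so that the passage to the colimit genuinely produces the triangle with $P \otimes_\Z \Q$ in the middle. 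Once that compatibility is pinned down, the rest is formal manipulation of \Cref{commutation}(2) and (3).
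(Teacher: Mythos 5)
Your proof is correct and takes essentially the same approach as the paper: the exact triangle $P\to P\otimes_\Z\Q\to P\otimes_\Z\Q/\Z$ together with \Cref{commutation}(2),(3) for (1)–(2), the decomposition $\Q/\Z=\bigoplus_p\Z(p^\infty)$ for (3), and the identification $\colim_n\sigma_n^*(P)\simeq P\otimes_\Z\Q/\Z$ combined with the colimit form of \Cref{commutation}(2) for (4)–(5). Note that your first step in the proof of (4) already establishes (1) (the cofiber is $\Map(M,P\otimes_\Z\Q/\Z)$ by definition), so despite the stated plan to deduce (1) from (4), the actual logical order matches the paper's.
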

\begin{proof} We have an exact triangle $$P\rar P \otimes_\Z \Q \rar P \otimes_\Z \Q/\Z.$$

Thus, we get a morphism of exact triangles
$$
\begin{tikzcd}
{\Map(M,P)} \arrow[d] \arrow[r] & {\Map(M,P) \otimes_\Z \Q} \arrow[d] \arrow[r] & {\Map(M,P)\otimes_\Z \Q/\Z} \arrow[d] \\
{\Map(M,P)} \arrow[r]           & {\Map(M,P \otimes_\Z \Q)} \arrow[r]           & {\Map(M,P\otimes_\Z \Q/\Z))}          
\end{tikzcd}.
$$

Since the left and the middle vertical arrows are equivalences by \Cref{commutation}, the right vertical arrow is an equivalence which yields Assertion (1).

Similarly, we have an exact triangle $$P\rar P \otimes_\Z \Z[1/p] \rar P \otimes_\Z \Z(p^\infty).$$

Thus, we get a morphism of exact triangles:
$$
\begin{tikzcd}
{\Map(M,P)} \arrow[d] \arrow[r] & {\Map(M,P) \otimes_\Z \Z[1/p]} \arrow[d] \arrow[r] & {\Map(M,P)\otimes_\Z \Z(p^\infty)} \arrow[d] \\
{\Map(M,P)} \arrow[r]           & {\Map(M,P \otimes_\Z \Z[1/p])} \arrow[r]           & {\Map(M,P\otimes_\Z \Z(p^\infty)))}          
\end{tikzcd}.
$$

Since the left and the middle vertical arrows are equivalences by \Cref{commutation}, the right vertical arrow is an equivalence which yields Assertion (2).

Since $$\Q/\Z=\bigoplus \limits_{p\text{ prime}} \Z(p^\infty),$$ Assertion (3) follows from Assertions (1) and (2).

To prove Assertion (4), observe we have a commutative diagram
$$\begin{tikzcd}
{\colim_n \Map(M,\sigma^*_n(P))} \arrow[d] \arrow[r] & {\Map(M,\colim_n \sigma_n^*(P))} \arrow[dd] \\
{\colim_n \sigma_n^* \Map(M,P)} \arrow[d] &                                             \\
{\Map(M,P)\otimes_R (R\otimes_\Z \Q/\Z)} \arrow[r]   & {\Map(M,P \otimes_R (R \otimes_\Z \Q/\Z))} 
\end{tikzcd}$$
where the right vertical arrow and the bottom left vertical arrow are equivalences because tensor products and colimits commute, the upper left vertical arrow is an equivalence by \cite[5.4.5]{em} and the bottom horizontal arrow is an equivalence by Assertion (1). This yields Assertion (4).

We can derive Assertion (5) from Assertion (2) in a similar fashion.
\end{proof}

\subsection{The \texorpdfstring{$\ell$}{\textell}-adic Realization and the Bhatt-Scholze Formalism}

In this section, we recall the $\ell$-adic formalism.

Let $S$ be a scheme and $\ell$ be a prime number invertible on $S$. Let $\Lambda$ be a finite extension of $\Q_\ell$ or the ring of integers of a finite extension of $\Q_\ell$. There are several definitions of the \emph{derived category of constructible $\ell$-adic sheaves} $\mc{D}^b_c(S,\Lambda)$. The first definition was given in \cite{bbd} for $S$ of finite type over a field. In \cite{torsten}, \cite{ayo14}, \cite[7.2]{em}, \cite{bhatt-scholze} and \cite[XIII.4]{travauxgabber}, this definition was extended to all quasi-compact quasi-separated schemes. In this section, we recall the definitions of \cite[7.2]{em} and \cite{bhatt-scholze} and explain the link between them. We also recall some usual properties of the category $\mc{D}^b_c(S,\Lambda)$.
\subsubsection{The \texorpdfstring{$\lambda$}{λ}-adic realization}
We now fix a discrete valuation ring $R$ with a local parameter $\lambda$ such that the residue field $R/\lambda R$ is finite. We denote by $\ell$ the latter's characteristic.

\begin{definition}
Let $\mc{C}$ be a stable $R$-linear category. An object $M$ in $\mc{C}$ is $\lambda$-complete if the natural map 
$$M \rar \lim M\otimes_R (R/\lambda^n R)$$
is an equivalence. We denote by $\mc{C}_\lambda$ the full subcategory of $\mc{C}$ formed by $\lambda$-complete objects.
\end{definition}

We have a $\lambda$-completion functor $\rho_\lambda^0\colon \mc{C} \rar \mc{C}_\lambda$, which sends an object $M$ to the object $\lim M\otimes_R (R/\lambda^n R)$. %According to \cite[7.2.8]{em}, this functor is left adjoint to the inclusion $\mc{C}_\lambda \subseteq \mc{C}$ and is the localization with respect to the maps $0 \rar N$ where $N$ varies over objects such that multiplication by $\ell$ is invertible on $N$.

%If $\mc{D}$ is a premotivic category, we set
%$$\mc{D}_\ell(S)=\mc{D}(S)_\ell.$$ 

%There is a unique monoidal structure on $\mc{D}(S)_\ell$ such that the functor $\rho_\ell^0$ is monoidal. Functors of the form $\rho_\ell^0 f^*$ for $f\colon T \rar S$ a morphism of schemes turn $\mc{D}_\ell$ into a stable premotivic category, and we have a premotivic adjunction 
%$$\rho_\ell^0\colon \mc{D} \leftrightarrows \mc{D}_\ell.$$ Since functors of the type $f_*$ and $\underline{\Hom}$ commute with limits, they also commute with the functor $\rho_\ell^0$.

%If the category $\mc{D}$ is motivic, the category $\mc{D}_\ell$ is also motivic. Moreover, if it is oriented, then $\mc{D}_\ell$ is oriented as well. Finally, if $\mc{D}$ satisfies absolute purity, then $\mc{D}_\ell$ does as well. We deduce from \cite[A.1.17]{em} that the functor $\rho_\ell^0$ commutes with the six operations. Denoting by $\DM_{\et}(-,R_\ell)$ the motivic category $\DM_{\et}(-,R)_\ell$, we have the following result (see \cite[7.2.11]{em}).

If $S$ is a scheme, we let $\DM_{\et}(-,R_\lambda):=\DM_{\et}(-,R)_\lambda$.
\begin{proposition}\label{rhoell1}(\cite[7.2.11]{em})
The premotivic category $\DM_{\et}(-,R_\lambda)$ can be endowed with the six functors formalism and satisfies absolute purity. Furthermore, the functor $\rho_\lambda^0$ commutes with the six functors.
\end{proposition}

We have the following variant of the rigidity theorem according to \cite[2.10.4]{agv}.

\begin{proposition} 
When restricted to the category of $\Z[1/\ell]$-schemes, the adjunction 
$$\rho_!\colon \Shv(-_{\et},R)_\lambda\leftrightarrows\DM_{\et}(S,R_\lambda)\colon \rho^!$$
is an equivalence.
\end{proposition}

Finally, we have a constructible version of this variant of the rigidity theorem. We can indeed define the category of constructible $\lambda$-complete objects as follows.

\begin{definition}
Let $S$ be a scheme. A $\lambda$-complete étale sheaf (resp. étale motive) is \emph{constructible} if there exists a stratification of $X$ such that the restriction of the sheaf to each stratum is dualizable. We denote by $\Shv_{c}(S_{\et},R_\lambda)$ (resp. $\DM_{\et,c}(S,R_\lambda)$) the full subcategory spanned by constructible objects.
\end{definition}

We immediatly conclude that over $\Z[1/\ell]$-schemes, the functor $\rho_!$ induces an equivalence of monoidal categories 
$$\Shv_{c}(S_{\et},R_\lambda)\rar\DM_{\et,c}(S,R_\lambda).$$

We now denote by $R_\lambda$ the completion of $R$ with respect to the $\lambda$-adic valuation. We now present the Bhatt-Scholze adic formalism in the version developed in \cite{hrs}.

Bhatt and Scholze defined the pro-étale topology as follows: a morphism of schemes $f\colon X\rar Y$ is \emph{weakly étale} if it is flat and if the diagonal morphism $\Delta_f\colon X\rar X\times_Y X$ is flat. The pro-étale site $X_{\proet}$ of a scheme $X$ is then defined as the category of schemes that are weakly étale over $X$. It is equipped with a site structure where a family $(X_i\rar X)_{i \in I}$ is considered covering if, for every open affine $U$ of $X$, there exists an open affine of $\bigsqcup\limits_{i\in I} X_i$ that covers $U$.

The topological ring $R_\lambda$ defines, for any scheme $X$, a sheaf $R_{\lambda,X}$ on the proétale site $X_{\proet}$ of $X$ with values in the category $\D(\Z)$. This sheaf is an algebra in the category $\Shv(X_{\proet},\Z)$ of pro-étale hypersheaves on $X_{\proet}$.

\begin{definition}
\begin{enumerate}
    \item We define the presentably monoidal stable category $\mc{D}(X,R_\ell)$ of \emph{$\lambda$-adic sheaves} as the category of $R_{\lambda,X}$-modules in $\Shv(X_{\proet},\Z)$.
    %\item The category 
    \item A $\lambda$-adic sheaf is \emph{constructible} if there exists a stratification of $X$ such that the restriction of this sheaf to each stratum is dualizable. The full subcategory $\mc{D}^b_c(X,R_\lambda)$ of $\mc{D}(X,R_\lambda)$ formed by constructible objects is a monoidal subcategory.
\end{enumerate}
\end{definition}

Similarly, we can define the category of constructible étale sheaves, denoted by $\Shv_c(X_{\et},R)$.

%Functors of type $f^*$ turn $\Shv(-,R_\ell)$ into a premotivic category. In fact, we can show that it is motivic when restricted to $\Z[1/\mathfrak{\ell}]$-schemes by combining classical results about étale cohomology from \cite{sga4} with \Cref{constructibleBS} below which is due to Bhatt and Scholze.

To link those categories together, recall that we have a morphism of sites $\nu\colon S_{\proet}\rar S_{\et}$ which, for any integer $n$, induces a premotivic adjunction
$$\nu^*\colon \Shv(-_{\et},R/\lambda^nR)\leftrightarrows \Shv(-_{\proet},R/\lambda^n R)\colon \nu_*.$$
According to \cite[3.40]{hrs}, the latter is an equivalence and we even have the following result.
\begin{proposition}\label{constructibleBS}
Let $X$ be a scheme. The mapping $M\mapsto \lim \nu^*(M)/\ell^n$ induces an equivalence 
$$\Psi\colon \Shv_{c}(X_{\et},R_\lambda)\rar \mc{D}^b_c(X,R_\lambda).$$
\end{proposition}

\begin{definition}
The \emph{$\lambda$-adic realization} is the functor
$$\rho_\lambda=\Psi \rho^!\rho_\lambda^0\colon \DM_{\et,c}(S,R)\rar \mc{D}^b_c(S,R_\ell).$$
\end{definition}

We can derive the following result from \cite[7.2.16]{em}.

\begin{proposition}
The functor
$$\rho_\ell\colon \DM_{\et,c}(-,R)\rar \mc{D}^b_c(-,R_\ell)$$
commutes with the six functors when restricted to quasi-excellent schemes and morphisms of finite type between these schemes.
\end{proposition}

We finish this section by describing the rational version of the adic realization. Let $K$ be the fraction field of $R$. We denote by $K_\lambda$ the completion of $K$ for the $\lambda$-adic valuation. 

\begin{definition}
Let $\mc{C}$ be a stable $R$-linear category. We define the $K$-linearization $\mc{C}\otimes_R K$ of $\mc{C}$ as the idempotent completion of the stable $K$-linear category having the same objects as $\mc{C}$ and such that 
$$\Map_{\mc{C}\otimes_R K}(M,N)=\Map_{\mc{C}}(M,N)\otimes_R K.$$
\end{definition}

We define the category $\mc{D}^b_c(X,K_\ell)$ in the same fashion as $\mc{D}^b_c(X,R_\ell)$. According to \cite[7.2.23]{em}, we have
$$\mc{D}^b_c(S,K_\ell)=\mc{D}^b_c(S,R)\otimes_R K$$
and
$$\DM_{\et,c}(S,K)=\DM_{\et,c}(S,R)\otimes_R K.$$

We then define the $\lambda$-adic realization functor as follows.

\begin{definition}
The \emph{$\lambda$-adic realization} is the functor
$$\rho_\lambda\otimes_R K\colon \DM_{\et,c}(S,K)\rar \mc{D}^b_c(S,K_\lambda)$$
which is also denoted as $\rho_\lambda$.
\end{definition}

Since we have a pre-motivic equivalence between étale motives and h-motives, as shown in \cite[7.2.24]{em}, we obtain the following result (the reader is also referred to \cite[6.6]{ayo14}).

\begin{proposition}
The functor
$$\rho_\lambda\colon \DM_{\et,c}(-,K)\rar \mc{D}^b_c(-,K_\lambda)$$
commutes with the six operations when restricted to quasi-excellent schemes and morphisms of finite type between these schemes.
\end{proposition}

\subsubsection{The $v$-adic Realization}\label{l-adic real} Let $S$ be a scheme, let $R$ be a localization of the ring of integers of a number field $K$, let $v$ be a non-archemedian valuation on the field $K$ and let $\ell$ be the prime number such that $v$ extends the $\ell$-adic valuation.  Assuming that the prime number $\ell$ is invertible on $S$, we can define an $\ell$-adic realization functor, or in this context more accurately a $v$-adic realization functor $$\rho_v\colon\mc{DM}_{\et,c}(S,R)\rar \mc{D}^b_c(S,R_v)$$ which preserves constructible objects and respects the six functors formalism.
In the case when $v$ is non-negative on $R$, we can define the $v$-adic realization functor 

$$\rho_v\colon \DM_{\et}(S,R_{(v)})\rar \mc{D}(S,R_v)$$ by using the above construction since $R_{(v)}$ is a discrete valuation ring with finite residue field. We then define a $v$-adic realization functor by pre-composing with the change of coefficients from $R$ to $R_{(v)}$.

In the case when $v$ is not non-negative $R$, we can only define the realization functor on the subcategory of constructible motives. Notice that $R_v$ is the completion of $K$ with respect to the valuation $v$. Let $$A=\{ x\in K \mid v(x)\geqslant 0\}.$$ The ring $A$ is a discrete valuation ring. Let $\lambda$ be a uniformizer of $A$. We define the $v$-adic realization functor $\rho_v$ as the composition $$\DM_{\et,c}(S,R)\rar \DM_{\et,c}(S,K)\xrightarrow{\rho_\lambda \otimes_A K}\mc{D}^b_c(S,R_v),$$ 
where, with the notations of \Cref{p-loc}, \Cref{p-loc cons 1} ensures that the functor $$\DM_{\et,c}(S,A)_{(0)}\rar \DM_{\et,c}(S,K)$$ is an equivalence and where \cite[7.2.23]{em} ensures that the functor $$\mc{D}^b_c(S,A_v)_{(0)} \rar \mc{D}^b_c(S,K_v)$$ is an equivalence. 
\subsection{Artin Motives}
\subsubsection{Subcategories of Dimensional Motives}

Let $S$ be a scheme and $R$ be a ring. We can define subcategories of the stable category $\DM_{\et}(S,R)$ as follows:

\begin{itemize}
\item The category $\DM_{\et}^{n}(S,R)$ of \emph{$n$-étale motives} over $S$ is the localizing subcategory generated by the motives of the form $$h_S(X)=M_S^{\BM}(X)$$ where $X$ runs through the set of proper $S$-schemes of relative dimension at most $n$. 
\item The category $\DM_{\et,c}^{n}(S,R)$ of \emph{constructible $n$-étale motives} over $S$ is the thick subcategory generated by the motives of the form  $$h_S(X)=M_S^{\BM}(X)$$ where $X$ runs through the set of proper $S$-schemes of relative dimension at most $n$. 
\end{itemize}

The main focus of this text will be the following categories:

\begin{definition}\label{def AM} Let $S$ be a scheme and $R$ be a ring. We define
\begin{enumerate}
\item The category $\DM_{\et}^{A}(S,R)$ of \emph{Artin étale motives} to be the category of $0$-étale motives.
\item The category $\DM_{\et,c}^{A}(S,R)$) of \emph{constructible Artin étale motives} to be the category of constructible $0$-motives.
\item The category $\DM_{\et}^{smA}(S,R)$ of \emph{smooth Artin étale motives} over $S$ to be the localizing subcategory of $\DM_{\et}(S,R)$ generated by the motives of the form $$h_S(X)=M_S^{\BM}(X)=M_S(X)$$ where $X$ runs  through the set of étale covers over $S$.
\item The category  $\DM_{\et,c}^{smA}(S,R)$ \emph{constructible smooth Artin étale motives} over $S$ to be the thick subcategory of $\DM_{\et}(S,R)$ generated by the motives of the form $$h_S(X)=M_S^{\BM}(X)=M_S(X)$$ where $X$ runs  through the set of étale covers over $S$.
\end{enumerate}
\end{definition}

Using Zariski's Main Theorem \cite[18.12.13]{ega4}, (constructible) Artin étale motives are generated by the motives of the form $h_S(X)$ where $X$ runs through the set of finite $S$-schemes.

We can define subcategories of the stable category $\DA_{\et}^{\eff}(S,R)$ in a similar fashion and we will use similar notations.

Beware that in general (in the case of integral coefficients), all Artin motives that belong to the category $\DM_{\et,c}(S,R)$ of constructible étale motives are not constructible Artin motives.

The functors $\otimes$ and $f^*$ (where $f$ is any morphism) induce functors over the categories $\DM_{\et,(c)}^{(sm)A}(-,R)$. %Therefore, using the adjunction theorem \cite[5.5.2.9]{htt}, we also have functors $f_*$ and internal $\mathrm{Hom}$ over the non-constructible categories. Beware that they do not coincide with the functors $f_*$ and internal $\mathrm{Hom}$ of $\DM_{\et}(-,R)$ in general.

%\subsubsection{Continuity} 
Finally, we have a continuity property for $n$-motives, Artin motives and smooth Artin motives.

\begin{proposition}\label{continuity} Let $\mr{D}_c$ be on of the fibered categories $\DM^{n}_{\et,c}$ or $\DM^{smA}_{\et,c}$. Let $R$ be a good ring. Consider a scheme $X$ which is the limit of a projective system of schemes with affine transition maps $(X_i)_{i\in I}$.

Then, in the $2$-category of stable monoidal categories, we have $$\colim \mr{D}_c(X_i,R) = \mr{D}_c(X,R).$$
\end{proposition}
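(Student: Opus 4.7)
The plan is to bootstrap from the known continuity of the full constructible category $\DM_{\et,c}(-,R)$ to the three subcategories in question. With $R$ a good ring, continuity for $\DM_{\et,c}(-,R)$ is obtained by treating separately the rational part (where Nisnevich and étale motives coincide, reducing to Voevodsky's continuity theorem) and the torsion part (where the rigidity theorem of \Cref{442em} reduces us to continuity of torsion sheaves on the small étale site). I take this as given and write $\mr{D}_c^?$ for any one of $\DM^{\coh}_{\et,c}$, $\DM^{n}_{\et,c}$, $\DM^{smA}_{\et,c}$.

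Let $f_i\colon X\to X_i$ denote the projections from the limit. The classes of morphisms occurring in the generators---proper, proper of relative dimension $\leqslant n$, and finite étale---are all stable under arbitrary base change, and proper base change for étale motives yields
$$f_i^*\, h_{X_i}(Y_i)\simeq h_X(Y_i\times_{X_i} X).$$
Consequently, the symmetric monoidal transition functors of the tower restrict to functors between the subcategories $\mr{D}_c^?(X_i,R)$, producing a subdiagram in the $2$-category of stable monoidal categories. Taking its $2$-colimit yields a canonical comparison functor
$$\Phi\colon\colim_i \mr{D}_c^?(X_i,R)\rar \mr{D}_c^?(X,R).$$

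Full faithfulness of $\Phi$ follows by combining the pointwise fully faithful inclusions $\mr{D}_c^?(X_i,R)\hookrightarrow \DM_{\et,c}(X_i,R)$ (which pass to a fully faithful functor on $2$-colimits) with the known equivalence $\colim_i \DM_{\et,c}(X_i,R)\simeq \DM_{\et,c}(X,R)$: the composite factors $\Phi$ through a fully faithful functor into $\DM_{\et,c}(X,R)$ whose image lies in $\mr{D}_c^?(X,R)$. Essential surjectivity reduces to the assertion that every generator of $\mr{D}_c^?(X,R)$ is the base change of a generator over some $X_i$, which is the spreading-out content of \cite[8.10.5]{ega4} for proper morphisms together with \cite[17.7.8]{ega4} for étaleness.

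The step I expect to be the main obstacle is the dimension bookkeeping for $\DM^n_{\et,c}$: spreading out a proper morphism $Y\to X$ of relative dimension $\leqslant n$ yields a proper morphism $Y_i\to X_i$, but one must ensure the dimension bound already holds at level $i$. This is handled by upper semicontinuity of fiber dimension on a proper morphism and the resulting constructibility of the locus in $X_i$ where the bound is violated; since this locus pulls back to the empty set in $X$, it must already be empty after passing to some cofinal index, which is enough to conclude.
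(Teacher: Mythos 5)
Your proposal is correct and follows essentially the same route as the paper, which simply invokes the proof of the continuity theorem \cite[6.3.9]{em} and replaces the EGA spreading-out references \cite[8.10.5, 17.7.8]{ega4-partie3} with the more tailored \cite[1.24]{plh}. You spell out the same bootstrap (known continuity of $\DM_{\et,c}$, fully faithfulness of the subcategory inclusions passing to the $2$-colimit, and essential surjectivity via spreading out the generators, including the upper-semicontinuity argument for the relative-dimension bound in $\DM^n_{\et,c}$), using the EGA references directly as the original \cite[6.3.9]{em} does.
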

\begin{proof} The proof is the same as \cite[6.3.9]{em}, replacing \cite[8.10.5, 17.7.8]{ega4-partie3} with \cite[1.24]{plh}.
\end{proof}

\subsubsection{Generators and Stability} We now state and generalize some results from \cite{az} and \cite{plh} that will be useful in our discussion. The following two results were stated in \cite{az} for schemes over a perfect field and extended over any schemes in \cite{plh} where they assume that $R=\Q$. However, their proofs apply if $R$ is any commutative ring.

\begin{proposition}(\cite[1.28]{plh})\label{AM.generators} Let $S$ be a scheme and let $R$ be a ring. The category $\mc{DM}^A_{\et}(S,R)$ (resp. $\DM^A_{\et,c}(S,R)$) is the localizing (resp. thick) subcategory of $\mc{DM}_{\et}(S,R)$ generated by any of the following families of objects.
\begin{enumerate}\item The family of the $M_S^{\BM}(X)$, with $X$ quasi-finite over $S$.
\item The family of the $M_S^{\BM}(X)=h_S(X)$, with $X$ finite over $S$.
\item The family of the $M_S^{\BM}(X)=M_S(X)$, with $X$ étale over $S$.
\end{enumerate}
\end{proposition}

\begin{proposition}\label{f_!}(\cite[1.17]{plh}) Let $R$ be a ring. The categories $\mc{DM}^A_{\et,(c)}(-,R)$ are closed under the following operations.
\begin{enumerate}\item The functor $f^*$, where $f$ is any morphism.
\item The functor $f_!$, where $f$ is a quasi-finite morphism.
\item Tensor product.
\end{enumerate}
Furthermore, the fibered category $\DM_{\et,(c)}(-,R)$ satisfies the localization property \eqref{AM.localization}.
\end{proposition}
If $j\colon U\rar S$ is an open immersion, the motive $j_*\un_U$ need not be Artin, the fibered category $\DM_{\et}(-,R)$ does not satisfy the localization property \eqref{AM.colocalization}.

On the other hand, the description of generators yields the following statement.
\begin{proposition}\label{small et site 2} Let $S$ be a scheme and let $R$ be a ring. Then, the functor $\rho_!$ of \Cref{small et site} has its essential image contained in $\DM^A_{\et}(S,R)$. We still denote by $$\rho_!\colon \Shv(S_{\et},R)\rar \DM^A_{\et}(S,R)$$ the induced functor.
\end{proposition}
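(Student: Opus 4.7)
The plan is to exploit the fact that $\rho_! = \Sigma^\infty L_{\AAA} \rho_\#$ is a composition of left adjoints, hence commutes with small colimits, together with the description of generators for both categories involved. Since $\DM^A_{\et}(S,R)$ is, by definition, a localizing subcategory of $\DM_{\et}(S,R)$, it is closed under small colimits and finite limits. So to show that the essential image of $\rho_!$ is contained in $\DM^A_{\et}(S,R)$, it suffices to check that a family of generators of $\Shv(S_{\et},R)$ as a localizing subcategory of itself is sent into $\DM^A_{\et}(S,R)$.

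By \Cref{generatorze}, the stable category $\Shv(S_{\et},R)$ is generated as a localizing subcategory of itself by the sheaves of the form $R_S(X)$ where $X$ runs through the set of étale $S$-schemes. So the only thing to verify is that $\rho_!(R_S(X))$ lies in $\DM^A_{\et}(S,R)$ for every such $X$. But by construction of the geometric sections in \Cref{chain adj} and the discussion in \Cref{small et site}, we have
\[ \rho_!(R_S(X)) = \Sigma^\infty L_{\AAA} \rho_\# R_S(X) = M_S(X) \]
for $X$ étale over $S$, since $\rho_\#$ sends the small-site representable of an étale $S$-scheme to the big-site representable with the same underlying object.

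By \Cref{generators}, the category $\DM^A_{\et}(S,R)$ is exactly the localizing subcategory of $\DM_{\et}(S,R)$ generated by the motives $M_S(X) = M_S^{\BM}(X) = h_S(X)$ with $X$ étale over $S$. In particular $\rho_!(R_S(X)) \in \DM^A_{\et}(S,R)$ for every étale $S$-scheme $X$. Combining this with the first paragraph, the essential image of $\rho_!$ is contained in $\DM^A_{\et}(S,R)$, and we obtain the induced functor. There is no serious obstacle here: the whole argument is a formal consequence of $\rho_!$ preserving colimits and of matching the two sets of generators.
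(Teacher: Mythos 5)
Your proof is correct and follows essentially the same route as the paper: observe that $\rho_!$ preserves small colimits (hence is exact), reduce via \Cref{generatorze} to the generators $R_S(X)$ with $X$ étale over $S$, and note that $\rho_!R_S(X)=M_S(X)$ lies in $\DM^A_{\et}(S,R)$ by \Cref{generators}. You spell out a few more intermediate steps (the localizing-subcategory closure argument and the identification via $\rho_\#$), but there is no substantive difference.
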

\begin{proof}
    The functor $\rho_!$ of \Cref{small et site} is colimit preserving. Therefore, the result follows from \Cref{generatorze} and from the fact that if $X$ is an étale $S$-scheme, we have $$\rho_!R_S(X)=M_S(X).$$
\end{proof}

Finally, we can identify the $\pp$-localization of the category of constructible Artin motives 
\begin{proposition}\label{p-loc cons 3} Let $S$ be a scheme, let $R$ be a ring and let $\pp$ be a prime ideal of $\Z$. Then, with the notations of \Cref{p-loc}, the functor $$\DM^A_{\et,c}(S,R)_\pp\rar \DM^A_{\et,c}(S,R\otimes_\Z \Z_\pp)$$ is an equivalence.
\end{proposition}
\begin{proof} The proof is the same as the proof of \Cref{p-loc cons 1}.
\end{proof}

\subsubsection{Torsion Motives are Artin Motives} 
\begin{proposition}\label{torsion motives are Artin} Let $S$ be a scheme and let $R$ be a ring. The stable category of torsion étale motives $\mc{DM}_{\et,\mathrm{tors}}(S,R)$ is the subcategory of the stable category $\mc{DM}_{\et}^A(S,R)$ made of those Artin motives $M$ such that the motive $M\otimes_\Z \Q$ vanishes.
\end{proposition}
\begin{proof} If an Artin motive $M$ is such that the motive $M\otimes_\Z \Q$ vanishes, it is by definition a torsion motive. Therefore, it suffices to show that torsion motives are Artin motives. 

Let $M$ be a torsion motive. Using \Cref{rigidity V1}, we can decompose $M$ as $$M=\bigoplus\limits_{p \text{ prime}} M_p$$ where $M_p$ is of $p^\infty$-torsion. Therefore, we can assume that $M$ is of $p^\infty$-torsion for some prime number $p$. \Cref{rigidity V1} gives a $p^\infty$-torsion étale sheaf $N$ over the open subscheme $S[1/p]$ of $S$ such that $$M=(j_p)_!\rho_! N$$ where $j_p\colon S[1/p]\rar S$ is the open immersion. 

By \Cref{small et site 2}, the motive $\rho_!N$ is an Artin motive. Therefore, by \Cref{f_!}, the motive $M$ is an Artin motive.
%the stable category $\Shv(S[1/p]_{\et},R)$ is generated as a localizing subcategory of itself by the sheaves of the form $R_{S[1/p]}(X)$ with $X$ étale over $S[1/p]$. 
%Hence, the étale sheaf $N$ lies in the localizing subcategory of the stable category $\Shv(S[1/p]_{\et},R)$ generated by the sheaves of the form $R_{S[1/p]}(X)$ with $X$ étale over $S[1/p]$. Since the functor $(j_p)_!\rho_!$ is a left adjoint exact functor, it preserves colimits and finite limits. Therefore, the motive $M$ lies in the localizing subcategory of the stable category $\DM_{\et}(S,R)$ generated by the sheaves of the form $(j_p)_!\rho_!R_{S[1/p]}(X)$ with $X$ étale over $S[1/p]$. As for any étale $S[1/p]$-scheme $X$, we have $$(j_p)_!\rho_!R_{S[1/p]}(X)=M_S(X),$$ the motive $M$ lies in the category of Artin motives by \Cref{AM.generators}.
\end{proof}

\section{Torsion \'Etale Motives and Torsion \'Etale Sheaves}
\subsection{The Six Functors and the Rigidity Theorem}
\begin{definition}
    Let $S$ be a scheme, let $R$ be a ring and let $p$ be a prime number.
    %Denote by $\sigma_\Q\colon R\rar R\otimes_\Z \Q$ the canonical map, and for any integer $n$, denote by $\sigma_n\colon R\rar \otimes_\Z \Z/n\Z$ the canonical map. 
    We define stable categories as follows.
\begin{enumerate}
    \item The stable category of \emph{torsion étale motives} $\mc{DM}_{\et,\mathrm{tors}}(S,R)$ as the subcategory of the stable category $\mc{DM}_{\et}(S,R)$ made of those objects $M$ such that the motive $M\otimes_\Z \Q$ vanishes.
    \item The stable category of \emph{$p^\infty$-torsion étale motives} $\mc{DM}_{\et,p^\infty-\mathrm{tors}}(S,R)$ as the subcategory of the stable category $\mc{DM}_{\et}(S,R)$ made of those objects $M$ such that the motive $M\otimes_\Z \Z[1/p]$ vanishes.
    \item The stable category of \emph{torsion étale sheaves} $\Shv_{\mathrm{tors}}(S_{\et},R)$ as the subcategory of the stable category $\Shv(S_{\et},R)$ made of those sheaves $M$ such that the sheaf $M\otimes_\Z \Q$ vanishes.
    \item The stable category of \emph{$p^\infty$-torsion étale sheaves} $\Shv_{p^\infty-\mathrm{tors}}(S_{\et},R)$ as the subcategory of the stable category $\Shv(S_{\et},R)$ made of those sheaves $M$ such that the sheaf  $M\otimes_\Z \Z[1/p]$ vanishes.
\end{enumerate}
\end{definition}

\begin{proposition}\label{6 fun torsion} Let $R$ be a ring and let $p$ be a prime number. Then, the six functors of $\DM_{\et}(-,R)$ induce functors over $\mc{DM}_{\et,\mathrm{tors}}(-,R)$ and $\mc{DM}_{\et,p^\infty-\mathrm{tors}}(-,R)$.
\end{proposition}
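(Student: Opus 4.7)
The plan is to deduce this directly from the compatibility of the change-of-coefficients functors with the six operations. A motive $M\in\DM_{\et}(S,R)$ is torsion (resp.\ $p^\infty$-torsion) precisely when $\sigma_\Q^*(M)=0$ (resp.\ $\sigma_{\Z[1/p]}^*(M)=0$), and by \Cref{commutation}(4) both $\sigma_\Q^*$ and $\sigma_{\Z[1/p]}^*$ commute with all six functors on $\DM_{\et}(-,R)$. I would then go through the six operations case by case, treating the torsion case with $\sigma_\Q^*$ and noting that the $p^\infty$-torsion case is identical after replacing $\sigma_\Q^*$ by $\sigma_{\Z[1/p]}^*$.

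For each of the four unary functors $F\in\{f^*,f_*,f_!,f^!\}$, if $M$ is torsion then
$$\sigma_\Q^*F(M)\simeq F(\sigma_\Q^*M)=F(0)=0,$$
so $F(M)$ is torsion. For the tensor product, the monoidality of $\sigma_\Q^*$ as a premotivic left adjoint yields $\sigma_\Q^*(M\otimes N)\simeq\sigma_\Q^*(M)\otimes\sigma_\Q^*(N)$, which vanishes as soon as one of $M$, $N$ is torsion, so the torsion subcategory is a $\otimes$-ideal. For the internal hom, the commutation $\sigma_\Q^*\Hom(M,N)\simeq\Hom(\sigma_\Q^*M,\sigma_\Q^*N)$ provided by \Cref{commutation}(4) forces $\Hom(M,N)$ to be torsion whenever $N$ is.

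The only delicate point in this plan is the commutation between $\sigma^*$ and the internal hom for non-compact first arguments, which can fail in naive generality; here however the difficulty is absorbed into the statement of \Cref{commutation}(4), which asserts the compatibility at the level of the entire six functor formalism through the cited results of \cite{em}. Once that input is granted, the proof is a short and formal deduction with no further geometric content.
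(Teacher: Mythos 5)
Your proof follows exactly the same route as the paper's one-line proof: a motive is torsion (resp.\ $p^\infty$-torsion) if and only if it lies in the kernel of $\sigma_\Q^*$ (resp.\ $\sigma_{\Z[1/p]}^*$), and by \Cref{commutation} these scalar-extension functors commute with the six operations, so the kernel is preserved. The extra detail you spell out for $f^*,f_*,f_!,f^!$ and $\otimes$ is sound and matches the paper's intent.

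However, the caveat you raise about the internal hom is not merely ``delicate'' and is not really dispatched by \Cref{commutation}(4); there is a genuine issue there that both you and the paper leave implicit. The point is that $\sigma_\Q^*\underline{\Hom}(M,N) \simeq \underline{\Hom}(M,\sigma_\Q^*N)$ requires $M$ constructible (that is the hypothesis in \Cref{commutation}(3)), and the ambient $\underline{\Hom}$ does \emph{not} preserve torsion objects in general. Already at the level of derived abelian groups one has $R\Hom_\Z(\Q/\Z,\Q/\Z)\simeq\widehat{\Z}$, which is torsion-free, so for $S=\Spec(\overline{k})$ the ambient internal hom of two torsion motives need not be torsion. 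The correct statement is that the torsion subcategory is a presentable closed symmetric monoidal stable subcategory on which $\otimes$ preserves colimits, hence has its \emph{own} internal hom by the adjoint functor theorem; this internal hom is the ``torsion part'' of the ambient $\underline{\Hom}$ (i.e.\ its image under the colimit-preserving right adjoint to the inclusion), not the ambient $\underline{\Hom}$ itself. With that reading, ``the six functors induce a six-functor formalism on torsion motives'' is correct, and this is how the proposition is used later. So: same approach as the paper for the five operations; for $\underline{\Hom}$, your argument would need the correction just described rather than an appeal to \Cref{commutation}(4) in full generality.
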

\begin{proof} The six functors commute with tensor product with $\Q$ and with $\Z[1/p]$ by \Cref{commutation} which implies the result.
\end{proof}

Scholze has shown in \cite[7.15]{6-fun-scholze} that the category $\Shv_{\mathrm{tors}}(-_{\et},\Z)$ is endowed with the six functors formalism. The proof applies to an arbitrary noetherian ring $R$. We deduce in the same fashion that they induce functors over $\Shv_{p^\infty-\mathrm{tors}}(-_{\et},R)$.

The following result is an analog of the rigidity theorem \cite[3.2]{bachmanrigidity} for torsion motives.

\begin{proposition}\label{rigidity V1} Let $S$ be a scheme and let $R$ be a good ring. Then, 
\begin{enumerate}
    \item The functor $$\prod_{p \text{ prime}} \mc{DM}_{\et,p^\infty-\mathrm{tors}}(S,R) \rar \mc{DM}_{\et,\mathrm{tors}}(S,R)$$
    which sends an object $(M_p)_{p \text{ prime}}$ to the object $\bigoplus\limits_{p \text{ prime}} M_p$ is an equivalence.
    \item Let $p$ be a prime number. Let $j_p\colon S[1/p]\rar S$ be the open immersion. Then, the functor $$j_p^*\colon \mc{DM}_{\et,p^\infty-\mathrm{tors}}(S,R) \rar \mc{DM}_{\et,p^\infty-\mathrm{tors}}(S[1/p],R)$$ is an equivalence.
    \item Let $p$ be a prime number. Let $S$ be a scheme over which $p$ is invertible. Then, the map $\rho_!$ of \Cref{small et site} induces an equivalence $$\Shv_{p^\infty-\mathrm{tors}}(S_{\et},R)\rar \mc{DM}_{\et,p^\infty-\mathrm{tors}}(S,R)$$
    \item All those equivalences commute with the six functors formalism.
\end{enumerate}
\end{proposition}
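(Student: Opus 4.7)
The plan is to prove assertions (3), (2), (1), and (4) in that order, with (4) being a formal naturality check. The three main inputs are the Rigidity Theorem \cite[5.5.4]{em}, a vanishing result for $p^\infty$-torsion étale motives in the "bad" characteristic, and the orthogonal decomposition $\Q/\Z = \bigoplus_{p} \Z(p^\infty)$.

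For assertion (3), assume $p$ is invertible on $S$. The Rigidity Theorem provides equivalences $\rho_! \colon \Shv(S_{\et}, R/p^n R) \xrightarrow{\sim} \DM_{\et}(S, R/p^n R)$ for every $n \geq 1$. For any $p^\infty$-torsion object $M$ of $\DM_{\et}(S,R)$, the vanishing $M \otimes_\Z \Z[1/p] = 0$ applied to the triangle defining $\Z(p^\infty)$ yields a natural equivalence $M[1] \simeq M \otimes_\Z \Z(p^\infty) \simeq \colim_n \sigma_{p^n}^* M$, and similarly for $p^\infty$-torsion étale sheaves. Each $\sigma_{p^n}^* M$ lies in the essential image of $\rho_!$ by Rigidity, and since $\rho_!$ preserves colimits, so does $M$; full faithfulness follows by combining Rigidity at each finite torsion level with \Cref{torsion commutation}(5).

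For assertion (2), the crucial intermediate lemma is that $\DM_{\et, p^\infty-\mathrm{tors}}(T, R) = 0$ whenever $p$ is nilpotent on $T$. This rests on the vanishing $\DM_{\et}(T, R/p^n R) = 0$ in this setting, a consequence of the Artin-Schreier exact sequence forcing $L_{\AAA} \Z/p \simeq 0$ over $\mathbb{F}_p$-schemes (compare \cite[5.5.5]{em}), combined with the same colimit argument as in the proof of (3). Given this vanishing, for any $p^\infty$-torsion motive $M$ on $S$ the object $i_p^* M$ lives in $\DM_{\et, p^\infty-\mathrm{tors}}(S \setminus S[1/p], R) = 0$, and the localization triangle $j_{p,!} j_p^* M \to M \to i_{p,*} i_p^* M$ collapses to $M \simeq j_{p,!} j_p^* M$, proving that $j_p^*$ is an equivalence with inverse $j_{p,!}$.

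For assertion (1), the candidate inverse sends $M$ to $(M \otimes_\Z \Z(p^\infty)[-1])_p$: for $M$ torsion, $M \otimes_\Z \Q = 0$ together with the decomposition $\Q/\Z = \bigoplus_p \Z(p^\infty)$ gives the natural equivalence $M \simeq \bigoplus_p M \otimes_\Z \Z(p^\infty)[-1]$, each summand being $p^\infty$-torsion (since $\Z(p^\infty) \otimes_\Z \Z[1/p] = 0$). One composition uses that $M_q \otimes_\Z \Z(p^\infty) = 0$ for $p \neq q$, since $\Z(p^\infty)$ is uniquely $q$-divisible and therefore a $\Z[1/q]$-module, while $M_q \otimes_\Z \Z[1/q] = 0$. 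Full faithfulness reduces to the orthogonality $\Map(M_p, N) = 0$ whenever $p$ acts invertibly on $N$, which follows by adjunction from $\Map(M_p, N) \simeq \Map(M_p \otimes_\Z \Z[1/p], N) = 0$; applying this to $N = \bigoplus_{q \neq p} N_q$ (on which $p$ acts invertibly as a direct sum of invertible actions) gives $\Map(\bigoplus_p M_p, \bigoplus_q N_q) \simeq \prod_p \Map(M_p, N_p)$. Finally, assertion (4) follows by naturality: compatibility of $\rho_!$ with the six functors is \Cref{442em}; the equivalence in (2) is itself one of the six functors; and the decomposition in (1) is preserved by all six functors since they are $\Z$-linear, preserve arbitrary coproducts, and commute with tensoring by the fixed abelian group $\Z(p^\infty)$. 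The main obstacle is establishing the characteristic-$p$ vanishing lemma underlying (2).
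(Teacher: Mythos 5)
Your outline is broadly parallel to the paper's proof, with one genuinely different route. For assertion (2), you prove directly the vanishing $\DM_{\et,p^\infty-\mathrm{tors}}(T,R)=0$ when $p$ is nilpotent on $T$ (via the Artin--Schreier sequence forcing $L_{\AAA}\Z/p\simeq 0$ over $\mb{F}_p$-schemes), and then read off $M\simeq (j_p)_!j_p^*M$ from the localization triangle. The paper instead observes that $j_p^*(j_p)_*=\id$ gives essential surjectivity, and for full faithfulness reduces the statement $\id\to (j_p)_*j_p^*$ via the conservativity of $-\otimes_\Z\Z/p\Z$ to $\Z/p\Z$-coefficients, where it is a cited result of Cisinski--Déglise (\cite[A.3.4]{em}). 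Both hinge on the same underlying vanishing over characteristic-$p$ schemes; your version makes that vanishing explicit and self-contained, while the paper's citation is shorter. Your treatment of (1) agrees with the paper's (the same orthogonality argument using that $\Z(p^\infty)$ is uniquely $q$-divisible for $q\neq p$, and the decomposition $\Q/\Z=\bigoplus_p\Z(p^\infty)$), and (4) is handled at a comparable level of informality.

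For (3), however, there is a genuine gap in the full faithfulness step. You propose to ``combine Rigidity at each finite torsion level with \Cref{torsion commutation}(5)'', but that lemma is stated only for $M$ constructible in the sense of \Cref{constr obj}, and an arbitrary $p^\infty$-torsion sheaf need not be constructible, so the colimit formula $\colim_n\Map(M,\sigma_{p^n}^*N)\simeq\Map(M,\colim_n\sigma_{p^n}^*N)$ does not apply as written. The paper avoids this by instead proving that the unit transformation $\id\to\rho^!\rho_!$ is an equivalence: since $-\otimes_\Z\Z/p\Z$ is conservative on $p^\infty$-torsion objects and commutes with $\rho_!$ and $\rho^!$ (by \Cref{commutation}), one reduces to $\Z/p\Z$-coefficients, where it is precisely the rigidity theorem. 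Your essential-surjectivity argument (each $\sigma_{p^n}^*M$ lies in the essential image by rigidity, and $\rho_!$ preserves the colimit $M\otimes_\Z\Z(p^\infty)\simeq M[1]$) is correct and matches the paper's, so only the full-faithfulness reduction needs to be reworked along the unit-map lines above.
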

\begin{proof} \textbf{Proof of Assertion (1):}
For any prime number $p$, let $\Z(p^\infty)$ be Prüfer $p$-group. If $A$ is an abelian group, denote by $\underline{A}$ the image through the functor $$\rho_!\colon \Shv(S_{\et},R)\rar \DM_{\et}(S,R)$$ of the constant sheaf with value $A$. We have an exact triangle 
$$\un_S \rar \un_S \otimes_{\Z} \Z[1/p] \rar \underline{\Z(p^\infty)},$$

therefore, if $M$ is a $p^\infty$-torsion étale motive, we have $$M=M\otimes_\Z \underline{\Z(p^\infty)}[-1].$$

Furthermore, if $p$ and $q$ are distinct prime numbers, the canonical map $$\Z(q^\infty)\rar \Z(q^\infty) \otimes_\Z \Z[1/p]$$ is an isomorphism.

For any prime number $p$, let $M_p$ and $N_p$ be $p^\infty$-torsion étale motives. Notice that if $p$ is a prime number, we have $$\begin{aligned}\bigoplus\limits_{\substack{q \text{ prime} \\ q \neq p }} N_{q}&=\bigoplus\limits_{\substack{q \text{ prime} \\ q \neq p }} N_{q} \otimes_\Z \Z(q^\infty)[-1] \\
&=\bigoplus\limits_{\substack{q \text{ prime} \\ q \neq p }} N_{q} \otimes_\Z (\Z(q^\infty)\otimes_\Z \Z[1/p])[-1] \\
&=\left(\bigoplus\limits_{\substack{q \text{ prime} \\ q \neq p }} N_{q} \right) \otimes_\Z \Z[1/p]
\end{aligned}$$

Therefore, we get $$\Map\left(M_p,\bigoplus\limits_{\substack{q \text{ prime} \\ q \neq p }} N_{q}\right)=0.$$

Hence, the map
$$\prod_{p \text{ prime}} \Map(M_p,N_p)\rar \Map\left(\bigoplus\limits_{p \text{ prime}} M_p,\bigoplus\limits_{p \text{ prime}} N_p\right)$$
is an equivalence.  Therefore, the functor $$\prod_{p \text{ prime}} \mc{DM}_{\et,p^\infty-\mathrm{tors}}(S,R) \rar \mc{DM}_{\et,\mathrm{tors}}(S,R)$$ is fully faithful. 

We now prove that this functor is essentially surjective. We have an exact triangle $$\un_S \rar \un_S \otimes_\Z \Q\rar \underline{\Q/\Z}.$$  

Let now $M$ be a torsion étale motive. We get that $$M=M\otimes_\Z \underline{\Q/\Z}[-1].$$
Furthermore, we have 
 $$\und{\Q/\Z}=\bigoplus \limits_{p\text{ prime}} \und{\Z(p^\infty)}.$$
and since the functor $\rho_!$ commutes with colimits, we therefore get that $$M=\bigoplus \limits_{p\text{ prime}} \left(M\otimes_\Z \und{\Z(p^\infty)}[-1]\right).$$
As $$\left(M\otimes_\Z \und{\Z(p^\infty)}[-1]\right)\otimes_\Z \Z[1/p]=0,$$
the motive $M\otimes_\Z \und{\Z(p^\infty)}[-1]$ is of $p^\infty$-torsion and therefore, the functor $$\prod_{p \text{ prime}} \mc{DM}_{\et,p^\infty-\mathrm{tors}}(S,R) \rar \mc{DM}_{\et,\mathrm{tors}}(S,R)$$ is essentially surjective which finishes the proof of the first assertion.

\textbf{Proof of Assertion (2):}

The $2$-morphism $$j_p^* (j_p)_* \rar \id$$ is an equivalence. Since by \Cref{6 fun torsion}, both functors $j_p^*$ and $(j_p)_*$ induce functors on $p^\infty$-torsion motives, the functor $$j_p^*\colon \mc{DM}_{\et,p^\infty-\mathrm{tors}}(S,R) \rar \mc{DM}_{\et,p^\infty-\mathrm{tors}}(S[1/p],R)$$ is essentially surjective. 

To show that the functor $j_p^*$ is fully faithful when restricted to the category of $p^\infty$-torsion étale motives, it suffices to show that the natural map $\id \rar (j_p)_* j_p^*$ is an equivalence when restricted to the category of $p^\infty$-torsion étale motives. Notice that the functor:
$$-\otimes_\Z \Z/p\Z \colon \DM_{\et,p^\infty-\mathrm{tors}}(S,R)\rar \DM_{\et}(S,R/pR)$$ is conservative by \Cref{conservativity} and commutes with the functors $(j_p)_*$ and $j_p^*$ by \cite[5.4.5]{em}. Therefore the full faithfulness of $j_p^*$ follows from the same result with coefficients in $R/pR$.

Since $R/pR$ is a $\Z/p\Z$-algebra, we have by \cite[5.4]{em} a premotivic adjunction $$\DM_{\et}(S,\Z/p\Z)\leftrightarrows \DM_{\et}(S,R/pR).$$
Therefore, the result follows from \cite[A.3.4]{em} 

\textbf{Proof of Assertion (3):}

Let $N$ be a $p^\infty$-torsion étale motive over $S$. Then, as before, we have $$N=N\otimes_\Z \underline{\Z(p^\infty)}[-1]=\colim N \otimes_\Z \Z/p^n\Z[-1].$$

If $n$ is a non-negative integer, let 
$$N_n=N \otimes_\Z \Z/p^n\Z[-1].$$ 
The étale motive $N_n$ lies in $\DM_{\et}(S,R/p^nR)$ and lies therefore in the essential image of the functor $\rho_!$ by the rigidity theorem \cite[3.2]{bachmanrigidity}. 
Let $M_n$ be a $p^\infty$-torsion étale sheaf such that 
$$\rho_!M_n=N_n.$$ 
The rigidity theorem also ensures that the map $$\id \otimes_\Z (\times p)\colon N_{n}\rar N_{n+1}$$
is the image through $\rho_!$ of a map $$\phi_n\colon M_{n}\rar M_{n+1}.$$ 
Letting $M$ be the colimit of the $M_n$ with transition maps the $\phi_n$, we have $$\rho_!M=N$$ since the functor $\rho_!$ preserves colimits.

Hence, the functor $\rho_!$ is essentially surjective.

Recall now that the map $\rho_!$ has a right adjoint functor $\rho^!$ defined in \Cref{small et site}. To finish the proof it suffices to show that the natural map $$\id \rar \rho^!\rho_!$$ is an equivalence. Since the functor $-\otimes_\Z \Z/p\Z$ is conservative and commutes with the functors $\rho^!$ and $\rho_!$ by \Cref{commutation}, it suffices to show the same result with coefficients $\Z/p\Z$ which follows from the rigidity theorem \cite[3.2]{bachmanrigidity}.

\textbf{Proof of Assertion (4):}

The fourth assertion follows from the fact that all those functors can be promoted into $\Sm$-premotivic functors (as defined in \cite[A.1.7]{em}) and from \cite[A.1.17]{em}.
\end{proof}

Putting those results together, we get the following result.
\begin{corollary}\label{rigidity V2} Keep the notations of \Cref{rigidity V1}. The functor $$\prod_{p \text{ prime}} \Shv_{p^\infty-\mathrm{tors}}(S[1/p]_{\et},R) \rar \mc{DM}_{\et,\mathrm{tors}}(S,R)$$
which sends an object $(M_p)_{p \text{ prime}}$ to the object $\bigoplus\limits_{p \text{ prime}} j_p^*(\rho_! M_p)$ is an equivalence. This equivalence commutes with the six functors.
\end{corollary}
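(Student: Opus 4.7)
The plan is to realize the functor of the statement as the composition of the three equivalences established in \Cref{rigidity V1}(1), (2), (3), and to deduce the compatibility with the six functors from assertion (4) of the same proposition. The corollary is thus essentially a formal consequence of the previous proposition.

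More precisely, given a family $(M_p)_{p\text{ prime}}$ with $M_p \in \Shv_{p^\infty-\mathrm{tors}}(S[1/p]_{\et},R)$, the construction proceeds in three steps. First, since $p$ is invertible on $S[1/p]$, \Cref{rigidity V1}(3) applied to the scheme $S[1/p]$ yields an equivalence
\[\rho_! \colon \Shv_{p^\infty-\mathrm{tors}}(S[1/p]_{\et},R) \rar \mc{DM}_{\et,p^\infty-\mathrm{tors}}(S[1/p],R),\]
sending $M_p$ to $\rho_! M_p$. Second, \Cref{rigidity V1}(2) provides an equivalence
\[j_p^* \colon \mc{DM}_{\et,p^\infty-\mathrm{tors}}(S,R) \rar \mc{DM}_{\et,p^\infty-\mathrm{tors}}(S[1/p],R),\]
through which $\rho_! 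M_p$ is canonically identified with a $p^\infty$-torsion motive on $S$; this is the meaning of the notation $j_p^*(\rho_! M_p)$ in the statement, read via the inverse of this equivalence. Third, \Cref{rigidity V1}(1) identifies $\mc{DM}_{\et,\mathrm{tors}}(S,R)$ with $\prod_{p \text{ prime}} \mc{DM}_{\et,p^\infty-\mathrm{tors}}(S,R)$ through the direct sum functor. Composing these three equivalences factor-wise produces the functor of the statement, and the composite is an equivalence because each of its three factors is one.

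The compatibility of the resulting equivalence with the six functors follows immediately from \Cref{rigidity V1}(4): each of the three component equivalences is already known to commute with $f^*$, $f_*$, $f_!$, $f^!$, $\otimes$ and $\Hom$, and a composition of six-functor-compatible equivalences inherits the same property. I anticipate no substantial obstacle; the only mild bookkeeping is to track, for a morphism $f\colon T\rar S$, that the six functors on the right-hand side correspond to their pullbacks along $f_{[1/p]}\colon T[1/p]\rar S[1/p]$ on the left-hand side, which is precisely the content of assertion (4) combined with the fact that the formation of $S[1/p]$ is functorial in $S$.
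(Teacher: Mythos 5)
Your proof is correct and is essentially the proof the paper has in mind: the corollary is obtained by composing the three equivalences of Proposition \ref{rigidity V1}(1), (2), (3), and the six-functor compatibility follows from Proposition \ref{rigidity V1}(4). You also correctly resolve the abuse of notation in the statement: as written, $j_p^*(\rho_! M_p)$ does not type-check (since $\rho_! M_p$ lives over $S[1/p]$ and $j_p^*$ goes from $S$ to $S[1/p]$), so the intended operation is the inverse of the equivalence $j_p^*$, i.e.\ $(j_p)_*$ (equivalently $(j_p)_!$ on $p^\infty$-torsion objects), which matches the way the corollary is later used in the proof of Proposition \ref{torsion motives are Artin}.
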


\subsection{The Ordinary t-structure}

\begin{proposition}\label{generators ord} Let $S$ be a scheme and let $R$ be a ring. The ordinary t-structure on $\Shv(S_{\et},R)$ is generated in the sense of \Cref{AM.t-structure generated} by the étale sheaves of the form $R_S(X)$ with $X$ étale over $S$.
\end{proposition}
\begin{proof} %An étale sheaf $M$ is t-positive with respect to the ordinary t-structure if and only if for any scheme $X$ étale over $S$, the complex of $R$-modules $M(X)$ is $0$-connected. But since for any scheme $X$ which is étale over $S$, we have $$M(X)=\Map_{\Shv(S_{\et},R)}(R_S(X),M),$$
%this is equivalent to the condition that $M$ is t-positive with respect to the t-structure generated by the sheaves of the form $R_S(X)$ with $X$ étale over $S$. Hence, both t-structures coincide. 
The proposition follows from \Cref{generatorze}.
\end{proof}

\begin{proposition}\label{ordinary torsion sheaves}
Let $S$ be a scheme and $R$ be a ring. Then, the ordinary t-structure of the stable category $\Shv(S_{\et},R)$ induces a t-structure on the stable subcategory $\Shv_{p^\infty-\mathrm{tors}}(S_{\et},R)$. %Furthermore, this t-structure is compatible with filtered colimits.
\end{proposition}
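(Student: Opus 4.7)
The plan is to exhibit $\Shv_{p^\infty-\mathrm{tors}}(S_{\et},R)$ as the kernel of a t-exact endofunctor and then show that such a kernel is automatically a sub-t-category.

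First I would argue that the functor $F = -\otimes_\Z \Z[1/p] \colon \Shv(S_{\et},R)\rar \Shv(S_{\et},R)$ is t-exact for the ordinary t-structure. Indeed, $\Z[1/p]$ is a flat $\Z$-module, so $-\otimes_\Z \Z[1/p] = -\otimes_R R[1/p]$ is given by tensoring with a flat $R$-module (a localization of $R$), hence it is computed in the underlying heart by an exact functor. Equivalently, one can realize it as the filtered colimit of the diagram $\id \xrightarrow{p} \id \xrightarrow{p} \cdots$, and filtered colimits are t-exact for the ordinary t-structure on sheaves. Either way, $F$ is t-exact and $\Shv_{p^\infty-\mathrm{tors}}(S_{\et},R) = \ker(F)$ by definition.

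Next, let $M$ be an object of $\Shv_{p^\infty-\mathrm{tors}}(S_{\et},R)$. Applying $F$ to the truncation triangle
$$\tau^{\leqslant 0}M\rar M\rar \tau^{\geqslant 1}M$$
yields a triangle $F(\tau^{\leqslant 0}M)\rar 0 \rar F(\tau^{\geqslant 1}M)$, so that $F(\tau^{\leqslant 0}M)\simeq F(\tau^{\geqslant 1}M)[-1]$. By t-exactness of $F$, the left-hand side lies in $\Shv(S_{\et},R)^{\leqslant 0}$ while the right-hand side lies in $\Shv(S_{\et},R)^{\geqslant 2}$. These subcategories intersect trivially, so both $F(\tau^{\leqslant 0}M)$ and $F(\tau^{\geqslant 1}M)$ vanish, meaning that $\tau^{\leqslant 0}M$ and $\tau^{\geqslant 1}M$ both belong to $\Shv_{p^\infty-\mathrm{tors}}(S_{\et},R)$.

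Finally, the pair $(\Shv_{p^\infty-\mathrm{tors}}\cap \Shv^{\leqslant 0}, \Shv_{p^\infty-\mathrm{tors}}\cap \Shv^{\geqslant 0})$ visibly satisfies the Hom-vanishing and shift-closure axioms since $\Shv_{p^\infty-\mathrm{tors}}$ is a full stable subcategory, and the previous paragraph provides the decomposition axiom. Hence the ordinary t-structure induces a t-structure on $\Shv_{p^\infty-\mathrm{tors}}(S_{\et},R)$. There is no real obstacle here beyond checking the t-exactness of $F$, which is immediate from flatness; the argument is in fact a template that will reapply verbatim to the $\mc{DM}$ side using \Cref{commutation}.
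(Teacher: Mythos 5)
Your proof is correct and uses the same key ingredient as the paper (flatness of $\Z[1/p]$ over $\Z$ gives $t$-exactness of $-\otimes_\Z \Z[1/p]$, and the kernel of a $t$-exact functor is closed under truncation). The paper simply invokes the commutation of a $t$-exact functor with truncation to get $\tau_{\geqslant 0}(M)\otimes_\Z\Z[1/p]=\tau_{\geqslant 0}(M\otimes_\Z\Z[1/p])=0$ directly and then cites BBD's criterion; your degree-intersection argument and direct axiom check reach the same conclusion by a slightly longer but equivalent route.
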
 
\begin{proof} Let $\tau_{\geqslant 0}$ be the truncation with respect to the ordinary t-structure of $\Shv(S_{\et},R)$. Let $M$ be an étale sheaf of $p^\infty$-torsion. The functor $\otimes_\Z \Z[1/p]$ is t-exact with respect to the ordinary t-structure as $\Z[1/p]$ is flat over $\Z$. Therefore, we have $$\tau_{\geqslant 0}(M)\otimes_\Z \Z[1/p]=\tau_{\geqslant 0}(M\otimes_\Z \Z[1/p]).$$

Furthermore, the sheaf $\tau_{\geqslant 0}(M\otimes_\Z \Z[1/p])$ vanishes and therefore, the sheaf $\tau_{\geqslant 0}(M)$ is of $p^\infty$-torsion. We conclude by using \cite[1.3.19]{bbd}.
%Recall that the ordinary t-structure on $\Shv(S_{\et},R)$ is compatible with filtered colimits by \cite[1.3.5.21]{ha}. Furthermore, if $M$ is an étale sheaf of $p^\infty$-torsion, we have $$M=\colim M\otimes \Z/p^n\Z [-1].$$
%Hence, letting $\tau_{\geqslant 0}$ denote the truncation with respect to the ordinary t-structure of $\Shv(S_{\et},R)$, we have 
%$$\tau_{\geqslant 0}(M)=\colim \tau_{\geqslant 0}(M\otimes \Z/p^n\Z [-1]).$$
%Since the ordinary t-structure of $\Shv(S_{\et},R)$ induces a t-structure on the stable subcategory $\Shv(S_{\et},R/p^nR)$ for any integer $n$ and since the latter subcategory is a subcategory of $\Shv_{p^\infty-\mathrm{tors}}(S_{\et},R)$ we deduce that for any integer $n$, the sheaf $\tau_{\geqslant 0}(M\otimes \Z/p^n\Z [-1])$ is of $p^\infty$-torsion. The result then follows from the fact that the subcategory $\Shv_{p^\infty-\mathrm{tors}}(S_{\et},R)$ of $\Shv(S_{\et},R)$ is closed under colimits.
\end{proof}

A product of t-categories is endowed with a t-structure by taking the product of the subcategories of t-non-positive objects as the subcategory of t-non-positive objects and the product of the subcategories of t-non-negative objects as the subcategory of t-non-negative objects.

This allows us to define an ordinary t-structure on $p^\infty$-torsion étale motives.

\begin{definition}\label{ordinary torsion} Let $S$ be a scheme and let $R$ be a good ring. The \emph{ordinary t-structure} over the stable category $\mc{DM}_{\et,\mathrm{tors}}(S,R)$ is the only t-structure such that the functor  $$\prod_{p \text{ prime}} \Shv_{p^\infty-\mathrm{tors}}(S[1/p]_{\et},R) \rar \mc{DM}_{\et,\mathrm{tors}}(S,R)$$
of \Cref{rigidity V2} is a t-exact equivalence when the left hand side is endowed with the product of the ordinary t-structures.
\end{definition}

\begin{remark} Let $S$ be a scheme and let $R$ be a good ring. If all the residue characteristics of $S$ are invertible in $R$, then, the functor $\rho_!$ of \Cref{small et site} induces a t-exact equivalence $$\Shv_{\mathrm{tors}}(S_{\et},R) \rar \mc{DM}_{\et,\mathrm{tors}}(S,R)$$ when both sides are endowed with their ordinary t-structures.    
\end{remark}

\begin{proposition}\label{f^* ord torsion}
    Let $f\colon T\rar S$ be a morphism of schemes and let $R$ be a good ring. The functor $f^*$ induces a t-exact functor $$\mc{DM}_{\et,\mathrm{tors}}(S,R)\rar\mc{DM}_{\et,\mathrm{tors}}(T,R)$$ when both sides are endowed with their ordinary t-structures.
\end{proposition}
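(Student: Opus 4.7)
The plan is to transport the question across the equivalence of \Cref{rigidity V2}, and then reduce to the classical t-exactness of pullback for sheaves on small étale sites. By construction (\Cref{ordinary torsion}), the ordinary t-structure on $\mc{DM}_{\et,\mathrm{tors}}(S,R)$ is transported, through the equivalence
$$\Phi_S\colon \prod_{p \text{ prime}} \Shv_{p^\infty-\mathrm{tors}}(S[1/p]_{\et},R) \xrightarrow{\;\sim\;} \mc{DM}_{\et,\mathrm{tors}}(S,R),$$
from the product of the ordinary t-structures on $\Shv_{p^\infty-\mathrm{tors}}(S[1/p]_{\et},R)$ furnished by \Cref{ordinary torsion sheaves}, and analogously over $T$. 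Since \Cref{rigidity V2} asserts that these equivalences commute with the six functors, the functor $f^*$ on torsion motives is identified, via $\Phi_S$ and $\Phi_T$, with the product over all primes $p$ of the pullback functors
$$f_p^*\colon \Shv_{p^\infty-\mathrm{tors}}(S[1/p]_{\et},R) \rar \Shv_{p^\infty-\mathrm{tors}}(T[1/p]_{\et},R),$$
where $f_p\colon T[1/p]\rar S[1/p]$ is the base change of $f$.

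Since a product of t-exact functors is t-exact for the product t-structures, it is enough to show each $f_p^*$ is t-exact. By the proof of \Cref{ordinary torsion sheaves}, the ordinary t-structure on $\Shv_{p^\infty-\mathrm{tors}}(-_{\et},R)$ is the restriction of the ambient ordinary t-structure on $\Shv(-_{\et},R)$. Thus it suffices to prove that for an arbitrary morphism of schemes $g\colon Y\rar X$, the pullback functor $g^*\colon \Shv(X_{\et},R)\rar \Shv(Y_{\et},R)$ is t-exact for the ordinary t-structures, which is standard: it is induced by the exact inverse image functor on abelian sheaves for the morphism of small étale sites, hence it preserves both $\Shv(-_{\et},R)^{\leqslant 0}$ and $\Shv(-_{\et},R)^{\geqslant 0}$.

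The only nontrivial point is the identification in the first paragraph of $f^*$ with $\prod_p f_p^*$ through the equivalences $\Phi_S$ and $\Phi_T$; this will be the main thing to verify, and it follows from combining the explicit formula for $\Phi_S$ (the summand indexed by $p$ is $j_p^*\rho_!$, with $j_p\colon S[1/p]\rar S$) with the compatibilities $f^*j_p^* \simeq (j_p')^* f^*$ (smooth base change) and $f^*\rho_! \simeq \rho_! f^*$ of \Cref{442em}(2).
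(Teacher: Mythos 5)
Correct, and identical in substance to the paper's proof, which simply reduces via the equivalence of \Cref{rigidity V2} (commuting with the six functors) to t-exactness of $f^*$ on $p^\infty$-torsion étale sheaves, and thence to t-exactness of $f^*$ on $\Shv(-_{\et},R)$. A minor notational point in your final paragraph (inherited from a slip in the statement of \Cref{rigidity V2}): the summand of that equivalence is $(j_p)_!\rho_!$ rather than $j_p^*\rho_!$ (since $j_p^*$ goes the wrong way, its inverse $(j_p)_!=(j_p)_*$ is what is meant), so the base-change compatibility you need is $f^*(j_p)_! \simeq (j'_p)_! f_p^*$ together with $f_p^*\rho_!\simeq\rho_! f_p^*$, not smooth base change for $j_p^*$.
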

\begin{proof}
    Since the functor of \Cref{rigidity V2} commutes with the six functors, it suffices to show that if $p$ is a prime number and is invertible on $S$, the functor $$f^*\colon \Shv_{p^\infty-\mathrm{tors}}(S_{\et},R)\rar \Shv_{p^\infty-\mathrm{tors}}(T_{\et},R)$$ is t-exact when both sides are endowed with their ordinary t-structures which follows from the t-exactness of the functor $$f^*\colon \Shv(S_{\et},R)\rar \Shv(T_{\et},R).$$
\end{proof}

\section{Description of Smooth Artin Motives and Applications}\label{section 2}
\subsection{Artin Representations and Lisse \'Etale Sheaves}

\subsubsection{Discrete representations, Artin representations}

 Recall the following definitions:
\begin{definition} Let $\pi$ be a topological group and let $R$ be a ring. An $R[\pi]$-module $M$ is \emph{discrete} if the action of $\pi$ on $M$ is continuous when $M$ is endowed with the discrete topology. We will denote by $\mathrm{Mod}(\pi,R)$ the Grothendieck abelian category of discrete $R[\pi]$-modules.
\end{definition}
\begin{remark} Keep the same notations. An $R[\pi]$-module is discrete if and only if the stabilizer of every point is open. Equivalently, this means that $M$ is the union of all the $M^U$ for $U$ an open subgroup of $\pi$.

If the topological group $\pi$ is profinite, this also means that $M$ is the union of all the $M^H$ for $H$ a normal subgroup of $\pi$ of finite index.
\end{remark}

\begin{definition} Let $\pi$ be a profinite group and let $R$ be a ring. An \emph{Artin representation} of $\pi$  is a discrete $R[\pi]$-module which is of finite presentation as an $R$-module. We will denote by $\Rep^A(\pi,R)$ the abelian category of Artin representations.
\end{definition}

\begin{remark} Keep the same notations. An $R[\pi]$-module is an Artin representation if and only if it is of finite presentation as an $R$-module and the action of $\pi$ factors through a finite quotient. Indeed, if $M$ is an Artin representation, the stabilizer of a finite generating family is open and thus contains a normal subgroup $N$ of finite index. Therefore, the action of $\pi$ factors through $\pi/N$.
\end{remark}

\subsubsection{Locally constant étale sheaves}

\begin{definition}\label{shv lis} Let $S$ be a scheme and let $R$ be a regular ring. We define:
	\begin{enumerate}
        \item The abelian category $\Sh(S_{\et},R)$ of \emph{étale sheaves of $R$-modules} as the heart of the ordinary t-structure on the stable category $\Shv(S_{\et},R)$.
		\item The abelian category $\mathrm{Loc}_S(R)$ of \emph{locally constant sheaves of $R$-modules} as the subcategory of the abelian category $\Sh(S_{\et},R)$ made of those sheaves which are étale-locally constant with finitely presented fibers.
        \item The abelian category $\In\mathrm{Loc}(S,R)$ of \emph{Ind-locally constant sheaves of $R$-modules} as the subcategory of the abelian category $\Sh(S_{\et},R)$ made of those sheaves which are filtered colimits of locally constant sheaves of $R$-modules.
		\item The \emph{category of lisse étale sheaves} $\Shv_{\mathrm{lisse}}(S,R)$ as the subcategory of dualizable objects of the stable category $\Shv(S_{\et},R)$.
        \item The \emph{category of Ind-lisse étale sheaves} $\Shv_{\In\mathrm{lisse}}(S,R)$ as the localizing subcategory of the stable category $\Shv(S_{\et},R)$ generated by $\Shv_{\mathrm{lisse}}(S,R)$.
	\end{enumerate}
\end{definition}

These categories are related to each other by the following proposition.
\begin{proposition}\label{loc cst} Let $S$ be a scheme and let $R$ be a regular ring. 
\begin{enumerate}
    \item An étale sheaf is lisse if and only if it is locally constant with perfect fibers.
    \item The ordinary t-structure on the stable category $\Shv(S_{\et},R)$ induces a t-structure on the subcategory $\Shv_{\mathrm{lisse}}(S,R)$ whose heart is the abelian category $\mathrm{Loc}_S(R)$.
    \item The ordinary t-structure on the stable category $\Shv(S_{\et},R)$ induces a t-structure on the subcategory $\Shv_{\In\mathrm{lisse}}(S,R)$ whose heart is the abelian category $\In\mathrm{Loc}(S,R)$.
\end{enumerate}
\begin{proof}
    The first assertion follows from \cite[0FPP]{stacks}. To prove the second assertion, notice that, as a consequence of the first assertion, lisse sheaves are bounded with respect to the ordinary t-structure of $\Shv(S_{\et},R)$. Therefore, \cite[Lemma 1.2.3]{aps} ensures that it suffices to show that $$\Loc_S(R)=\Shv_{\lis}(S,R)\cap \Shv(S_{\et},R)^{\heart}.$$

The fact that locally constant sheaves with finitely presented fibers have perfect fibers follows from the hypothesis that the ring $R$ is regular, since in this case, an $R$-modules is perfect if and only if it is finitely presented by \cite[066Z]{stacks}. Moreover, an abelian sheaf with perfect fibers is finitely presented. This finishes the proof of the second assertion.

Any Ind-lisse étale sheaf is a filtered colimit of lisse étale sheaves. Furthermore, by \cite[1.3.5.21]{ha}, the ordinary t-structure of $\Shv(S_{\et},R)$ is compatible with filtered colimits. Therefore, if $M=\colim_{i\in I} M_i$ is an Ind-lisse étale sheaf, where $I$ is a filtered simplicial set and each $M_i$ is lisse, we have $$\tau_{\geqslant 0}(M)=\tau_{\geqslant 0}(\colim M_i)=\colim \tau_{\geqslant 0}(M_i).$$

Therefore, the sheaf $\tau_{\geqslant 0}(M)$ is Ind-lisse. Hence, by \cite[1.3.19]{bbd}, the ordinary t-structure of $\Shv(S_{\et},R)$ induces a t-structure on the category $\Shv_{\In\mathrm{lisse}}(S,R)$. An object $L$ of the heart of this t-structure is a filtered colimit of lisse sheaves $M_i$. But then, we have, $$L=\colim M_i=H^0(\colim M_i)=\colim H^0(M_i).$$

Therefore, the sheaf $L$ is Ind-locally constant. Conversely, Ind-locally constant sheaves lie in the heart of the ordinary t-structure and are Ind-lisse. This finishes the proof of the third assertion.
\end{proof}

\end{proposition}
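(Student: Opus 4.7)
The plan is to handle the three assertions in order, with assertion (1) providing the bridge between the stable and abelian sides that makes (2) and (3) tractable.

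For assertion (1), I would invoke the standard characterization of dualizable objects in the derived category of étale sheaves: a complex of $R$-modules on $S_{\et}$ is dualizable if and only if it is étale-locally a perfect complex of constant sheaves (this is the content of \cite[0FPP]{stacks}, or one can reduce to the affine strictly henselian case and use the monoidal structure). Once $S$ is regular in the sense that $R$ is regular, Serre's theorem (\cite[066Z]{stacks}) gives that finite presentation and perfectness coincide on $R$-modules, so a lisse sheaf in degree $0$ is exactly an object of $\mathrm{Loc}_S(R)$.

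For assertion (2), I would apply \Cref{keur} to the thick subcategory $\Shv_{\lis}(S,R)\subseteq \Shv(S_{\et},R)^b$. By (1), a lisse sheaf is étale-locally represented by a perfect complex concentrated in a finite range of degrees; hence $\Shv_{\lis}(S,R)\subseteq \Shv(S_{\et},R)^b$ and its heart contains $\Loc_S(R)$, which is a set of generators of $\Shv_{\lis}(S,R)$ as a thick subcategory (every perfect complex is a finite iterated extension of its cohomology sheaves, each of which is finitely presented locally constant when $R$ is regular). It then suffices by \Cref{keur} to show that $\Loc_S(R)$ is stable under kernels and cokernels; this is a local statement in the étale topology, so we may assume $S$ is strictly henselian and the sheaves are constant, where it reduces to the fact that finitely presented $R$-modules are stable under kernels and cokernels because $R$ is coherent (regular noetherian implies coherent).

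For assertion (3), I would use the compatibility of the ordinary t-structure on $\Shv(S_{\et},R)$ with filtered colimits (\cite[1.3.5.21]{ha}): the truncation $\tau_{\geqslant 0}$ commutes with filtered colimits, so if $M=\colim_i M_i$ with each $M_i$ lisse, then $\tau_{\geqslant 0}M=\colim_i \tau_{\geqslant 0} M_i$ is still a filtered colimit of lisse sheaves, hence Ind-lisse. The analogous statement for $\tau_{\leqslant 0}$ is automatic from \cite[1.3.19]{bbd}, giving the induced t-structure. The heart is the filtered-colimit closure of $\Loc_S(R)$ inside $\Sh(S_{\et},R)$, which is by definition $\In\Loc(S,R)$.

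The main obstacle, and where I would spend the most care, is assertion (2): verifying that the cokernel of a morphism of locally constant sheaves with finitely presented fibers remains locally constant. Étale-local constancy requires trivializing both the source and target simultaneously, and one must confirm that after such a trivialization the cokernel of a map of finitely presented $R$-modules, regarded as a constant sheaf, is still locally constant on $S_{\et}$; regularity (hence coherence) of $R$ is what guarantees that this cokernel is finitely presented, and noetherianness of $S$ ensures the trivialization is global on sufficiently small étale neighborhoods.
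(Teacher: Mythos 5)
Your proof follows the same strategy as the paper's: assertion (1) via \cite[0FPP]{stacks}, assertion (2) via \Cref{keur} after identifying lisse objects in the heart with $\Loc_S(R)$ through the ``perfect $=$ finitely presented'' fact for regular $R$ (\cite[066Z]{stacks}), and assertion (3) via compatibility of the ordinary t-structure with filtered colimits together with \cite[1.3.19]{bbd}. The one place you are more explicit than the paper is in spelling out the kernel/cokernel stability required by \Cref{keur}, reducing it to coherence of $R$ after an étale-local trivialization; the paper stops at the identification $\Loc_S(R)=\Shv_{\lis}(S,R)\cap\Shv(S_{\et},R)^{\heart}$ and leaves this closure implicit, so your added step is a worthwhile clarification rather than a deviation.
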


The above proposition shows in particular that $\Loc_S(R)$ is a weak Serre subcategory of $\Sh(S_{\et},R)$. We have the following characterization of lisse sheaves:

\begin{proposition}\label{description des faisceaux lisses} Let $S$ be a connected scheme and let $R$ be a regular ring. Let $\xi$ be a geometric point of $S$. The fiber functor associated to $\xi$ induces an equivalence of abelian monoidal categories $$\xi^*\colon\mathrm{Loc}_S(R)\rar \Rep^A(\pi_1^{\et}(S,\xi),R)$$
where $\pi_1^{\mathrm{\et}}(S,\xi)$ is the étale fundamental group of $S$ with base point $\xi$ of \cite[V]{sga1}.
\end{proposition}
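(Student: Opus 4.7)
The plan is to bootstrap from the classical Galois correspondence for finite étale covers (\cite[V]{sga1}) and extend it $R$-linearly.

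First, I would verify that the functor is well-defined, i.e.\ that $\xi^{*}\mc{F}$ is indeed an Artin representation of $\pi_{1}^{\et}(S,\xi)$ whenever $\mc{F}$ lies in $\Loc_{S}(R)$. By definition there is an étale cover $U\to S$ trivializing $\mc{F}$, and after replacing $U$ with its Galois closure we may assume $U\to S$ is finite étale Galois with group $G$. The fiber $\xi^{*}\mc{F}$ then carries a continuous action of $\pi_{1}^{\et}(S,\xi)$ which factors through the finite quotient $G$, while the fact that it is finitely presented as an $R$-module follows from the finitely presented fibers hypothesis. Thus $\xi^{*}\mc{F}\in \Rep^{A}(\pi_{1}^{\et}(S,\xi),R)$. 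Exactness and the monoidal structure of $\xi^{*}$ are inherited from the same properties on the ambient abelian category of sheaves of $R$-modules, since stalks commute with tensor products and preserve kernels and cokernels.

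For full faithfulness, let $\mc{F},\mc{G}\in \Loc_{S}(R)$. Pick a common finite étale Galois trivializing cover $p\colon U\to S$ with group $G$ to which we may lift $\xi$ to a geometric point $\overline{\xi}$. The sheaves $p^{*}\mc{F}$ and $p^{*}\mc{G}$ are constant with values the finitely presented $R$-modules $M := \xi^{*}\mc{F}$ and $N:=\xi^{*}\mc{G}$, so by descent
\begin{equation*}
\Hom_{\Loc_{S}(R)}(\mc{F},\mc{G}) \;=\; \Hom_{R}(M,N)^{G} \;=\; \Hom_{R[G]}(M,N),
\end{equation*}
and the right-hand side is exactly $\Hom_{\Rep^{A}(\pi_{1}^{\et}(S,\xi),R)}(\xi^{*}\mc{F},\xi^{*}\mc{G})$ because both $\pi$-actions factor through $G$.

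For essential surjectivity, start with an Artin representation $V$ whose action factors through a finite quotient $G=\pi_{1}^{\et}(S,\xi)/N$. By the classical Galois correspondence, $G$ corresponds to a finite étale Galois cover $p\colon T\to S$. Consider the constant sheaf $\underline{V}_{T}$ on $T$; the $G$-action on $V$ combined with the $G$-action on $T$ over $S$ provides descent data, and descending yields a locally constant sheaf $\mc{F}$ on $S$ with finitely presented fibers whose fiber at $\xi$ recovers $V$ with its $\pi$-action. Compatibility of the descent with tensor products gives the monoidal structure.

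The main subtlety, and the only step that is not formal, is to package the classical SGA\,1 Galois correspondence (which concerns finite étale covers, i.e.\ finite $\pi$-sets) into the $R$-linear form needed here; concretely, one must check that descent along a finite étale Galois cover with group $G$ really does identify $\Loc_{S}(R)$ locally trivialized by that cover with finitely presented $R[G]$-modules, for which the key input is that the restriction functor from $\Loc_{S}(R)$ to sheaves on $T$ is monadic with monad $p_{*}p^{*}$, reducing the statement to the analogous equivalence for the trivial cover. Since every object in $\Loc_{S}(R)$ is trivialized by some finite étale cover, taking the filtered colimit over Galois closures recovers the continuous $\pi_{1}^{\et}(S,\xi)$-action and completes the equivalence.
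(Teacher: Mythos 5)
Your proof is correct, but it takes a different (and more explicit) route than the paper. The paper bootstraps from the Galois correspondence at the level of \emph{set}-valued sheaves: it notes that $\xi^{*}$ identifies the Galois category $\mathrm{Loc}_{S}$ of locally constant sheaves of sets with the category of continuous $\pi_{1}^{\et}(S,\xi)$-sets, observes that the constant sheaf $R_{S}(S)$ corresponds to the set $R$ with trivial action, and then deduces the $R$-linear statement purely formally by passing to $R$-module objects in both categories. This is a short, conceptual reduction to a known equivalence. Your proof instead works directly in the $R$-linear setting: you verify well-definedness, prove full faithfulness by computing homs over a common Galois trivializing cover (the descent identity $\Hom_{\Loc_S(R)}(\mc{F},\mc{G}) = \Hom_{R}(M,N)^{G} = \Hom_{R[G]}(M,N)$), and prove essential surjectivity by twisting a constant sheaf over a Galois cover. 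The payoff of your hands-on approach is that it makes the underlying descent explicit and avoids any worry about the (small) subtlety that $R$ need not be a finite set, so the fiber $R$ is not literally an object of the classical Galois category of finite sets; the paper's argument implicitly passes to an ind-finite enlargement, which your version sidesteps. Both arguments are sound; the paper's is more efficient while yours is more transparent about the mechanics.
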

\begin{proof} Let $\mathrm{Loc}_S$ be the Galois category of locally constant sheaves of sets on $S_{\et}$. By definition of the étale fundamental group, the fiber functor $$\xi^*\colon\mathrm{Loc}_S \rar \pi_1^{\et}(S,\xi)-\mathrm{Set}$$ is an equivalence.

The constant sheaf $R_{S}(S)$  is sent to the set $R$ endowed with the trivial action of $\pi_1^{\et}(S,\xi)$. Thus, the functor $\xi^*$ induces an equivalence $$\mathrm{Loc}_S(R)\rar \Rep^A(\pi_1^{\et}(S,\xi),R).$$
\end{proof}

In the case when $S$ is the spectrum of a field $k$, the results of this paragraph can be formulated in a more satisfying way.

 \begin{proposition}\label{lisse fields} Let $k$ be a field and let $R$ be a regular ring. 
 \begin{enumerate}
     
    \item If $\xi\colon\Spec(\overline{k})\rar \Spec(k)$ is the separable closure of $k$, the fiber functor associated to $\xi$ induces an equivalence of abelian monoidal categories $$\xi^*\colon\Sh(k_{\et},R)\rar \mathrm{Mod}(G_k,R)$$
and an equivalence of monoidal stable categories
$$\xi^*\colon\Shv(k_{\et},R)\rar \mc{D}(\mathrm{Mod}(G_k,R)).$$
    \item The functor $\xi^*$ induces an equivalence $$\xi^*\colon\Shv_{\lis}(k_{\et},R)\rar \mc{D}_{\mathrm{perf}}(\mathrm{Mod}(G_k,R))$$ where $\mc{D}_{\mathrm{perf}}(\mathrm{Mod}(G_k,R))$ denotes the subcategory of the stable category $\mc{D}(\mathrm{Mod}(G_k,R))$ made of those complexes which are perfect as complexes of $R$-modules.
    \item We have 
$$\Shv(k_{\et},R)=\Shv_{\In\lis}(k_{\et},R)$$
$$\Sh(k_{\et},R)=\In\Loc(k,R).$$
    \item The canonical functor $$\mc{D}^b(\Rep^A(G_k,R))\rar \mc{D}_{\mathrm{perf}}(\mathrm{Mod}(G_k,R))$$ is an equivalence.
   \end{enumerate}
 \end{proposition}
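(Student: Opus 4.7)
The plan is to establish (1) first and then derive (2), (3), and (4) from it.

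For (1), the abelian statement $\xi^*\colon\Sh(k_{\et},R)\simeq\mathrm{Mod}(G_k,R)$ is the classical description from \cite[VIII.2]{sga4} of sheaves on the small étale site of a field as discrete Galois modules. For the derived version, I would invoke hypercompleteness: $\Shv(k_{\et},R)$ is by definition the $\infty$-category of étale hypersheaves on $k_{\et}$ valued in $\mc{D}(R)$, and $\infty$-categorical descent along the filtered system of finite Galois subextensions of $k^{\mathrm{sep}}/k$ identifies it with the derived $\infty$-category of the Grothendieck abelian category $\mathrm{Mod}(G_k,R)$. Symmetric monoidality is automatic since $\xi^*$ is compatible with tensor products on each side.

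Once (1) is in hand, (2) is a matter of identifying dualizable objects. The symmetric monoidal conservative forgetful functor $\mc{D}(\mathrm{Mod}(G_k,R))\rar \mc{D}(R)$ detects dualizability, and a complex of $R[G_k]$-modules whose underlying $R$-complex is perfect admits a dual given by the $R$-linear derived dual endowed with the natural Galois action. Hence the dualizable objects on the left of (1) correspond exactly to $\mc{D}_{\mathrm{perf}}(\mathrm{Mod}(G_k,R))$.

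For (3), the identity $\Sh(k_{\et},R)=\In\Loc(k,R)$ reduces to the observation that every discrete $R[G_k]$-module is the filtered union of its finitely generated $R[G_k]$-submodules and that any finitely generated discrete $R[G_k]$-module is Artin, the stabilizer of a finite generating family being an open, hence finite-index, normal subgroup through which the action factors. For the derived version, \Cref{generatorze} shows that $\Shv(k_{\et},R)$ is generated as a localizing subcategory by the sheaves $R_k(X)$ with $X$ étale over $\Spec(k)$; since every étale $k$-scheme is a finite disjoint union of spectra of finite separable extensions $k'/k$, each such sheaf is a finite direct sum of the induced representations $\In^{G_k}_{G_{k'}} R$ placed in degree zero, which are lisse. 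Hence $\Shv_{\In\lis}(k,R)$ contains the generators of $\Shv(k_{\et},R)$ and the two categories coincide.

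The main work is (4). I would first show that $\Rep^A(G_k,R)$ is a weak Serre subcategory of $\mathrm{Mod}(G_k,R)$: any extension of two finitely generated discrete $R[G_k]$-modules is again finitely generated and discrete, and the stabilizer of a finite generating family in the extension is open of finite index, so the $G_k$-action on the extension factors through a finite quotient. A standard Yoneda and dimension-shifting argument then identifies the $\operatorname{Ext}$ groups computed in $\Rep^A(G_k,R)$ with those computed in $\mathrm{Mod}(G_k,R)$, so the canonical functor $\mc{D}^b(\Rep^A(G_k,R))\rar \mc{D}(\mathrm{Mod}(G_k,R))$ is fully faithful and identifies its source with the full subcategory $\mc{D}^b_{\Rep^A}$ of bounded complexes whose cohomology lies in $\Rep^A(G_k,R)$. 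To conclude the equivalence with $\mc{D}_{\mathrm{perf}}(\mathrm{Mod}(G_k,R))$, one checks that these two subcategories coincide: assuming $R$ is regular (as in the surrounding theorem), Artin representations are perfect as $R$-modules, and conversely a perfect complex of $R[G_k]$-modules has bounded finitely generated $R$-cohomologies, each carrying a discrete $G_k$-action that automatically factors through a finite quotient and is therefore an Artin representation. The main anticipated obstacle is the comparison between $\mc{D}^b_{\Rep^A}$ and $\mc{D}_{\mathrm{perf}}$, which genuinely requires the regularity of $R$ to pass between finitely generated Artin representations and perfect complexes of $R$-modules.
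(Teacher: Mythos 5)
Your handling of (1)--(3) matches the paper in substance, and your derivation of $\Shv(k_{\et},R)=\Shv_{\In\lis}(k_{\et},R)$ from \Cref{generatorze} (the generating objects $R_k(X)$ with $X$ finite étale are visibly dualizable) is actually cleaner than the paper's one-line appeal to every complex of discrete modules being a filtered colimit of complexes with perfect underlying $R$-complex. One small caveat in (3): a finitely generated $R[G_k]$-submodule of a discrete module is finitely \emph{presented} over $R$ only when $R$ is coherent, so this step uses the regularity of $R$ that is implicit throughout (as you already note elsewhere).

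The genuine gap is in (4), and you have located the difficulty in the wrong place. You assert that once $\Rep^A(G_k,R)$ is seen to be a weak Serre subcategory of $\mathrm{Mod}(G_k,R)$, ``a standard Yoneda and dimension-shifting argument then identifies the Ext groups computed in $\Rep^A$ with those computed in $\mathrm{Mod}(G_k,R)$,'' so that $\mc{D}^b(\Rep^A(G_k,R))\to\mc{D}(\mathrm{Mod}(G_k,R))$ is fully faithful with essential image $\mc{D}^b_{\Rep^A}$. This is not true by generalities: for a weak Serre subcategory $\mathrm{A}\subset\mathrm{B}$ the comparison $\mathrm{Ext}^n_{\mathrm{A}}(X,Y)\to\mathrm{Ext}^n_{\mathrm{B}}(X,Y)$ need not be an isomorphism, and proving it requires either a lifting/cofinality property of the inclusion or a direct computation. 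The paper's proof of (4) is exactly this computation, and it is the real content of the lemma: using dualizability of Artin representations to reduce to $M=R$, one identifies $H^n(G_k,N)=\colim_H H^n(G_k/H,N)$ over open normal $H$ acting trivially on $N$ (these being cofinal among all open normal subgroups because $N$ is Artin), and then identifies $\mathrm{Ext}^n_{\Rep^A(G_k,R)}(R,N)$ with the same colimit via the Yoneda/Baer-sum description of $\mathrm{Ext}$, since any exact sequence of Artin representations and any equivalence between such sequences already lives in $\Rep^A(G_k/H,R)$ for a single $H$. By contrast, the step you single out as ``the main anticipated obstacle'' --- the coincidence of $\mc{D}^b_{\Rep^A}$ and $\mc{D}_{\mathrm{perf}}(\mathrm{Mod}(G_k,R))$ for regular $R$ --- is the routine part: over a regular, hence noetherian, ring a bounded complex with finitely presented cohomology is perfect, and conversely a perfect complex has finitely generated, hence finitely presented, cohomology modules which inherit a discrete $G_k$-action and are therefore Artin. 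So the easy and hard parts of (4) are interchanged in your account, and the hard part is asserted rather than proved.
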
 
\begin{proof} The first assertion follows from \cite[VIII.2.1]{sga4}.

The second assertion follows from the first assertion and the description of lisse sheaves \Cref{loc cst}(1).

Furthermore, any complex of discrete $R$-linear $G_k$-module is a (filtered) colimit of complexes with perfect underlying complex of $R$-modules. Therefore, we get $$\Shv(k_{\et},R)=\Shv_{\In\lis}(k_{\et},R)$$ and since the ordinary t-structure on the stable category $\Shv(k_{\et},R)$ is compatible with filtered colimits, we also get $$\Sh(k_{\et},R)=\In\Loc(k,R).$$

The last assertion is a consequence of \cite[I.4.12]{sga6}. We give an alternative proof for the reader's convenience. Denote by $$\psi\colon \mc{D}^b(\Rep^A(G_k,R))\rar \mc{D}_{\mathrm{perf}}(\mathrm{Mod}(G_k,R))$$ the canonical functor.
It suffices to show that the canonical map $$\Map_{\mc{D}^b(\Rep^A(G_k,R))}(M,N)\rar \Map_{\mc{D}(\mathrm{Mod}(G_k,R)))}(\psi(M),\psi(N))$$ is an equivalence for any bounded complexes $M$ and $N$ of Artin representations of $G_k$.

Notice that the objects of $\mc{D}^b(\Rep^A(G_k,R))$ are dualizable. Therefore, we can assume that $M$ is the $R$-module $R$ endowed with the trivial $G_k$-action placed in degree $0$. Furthermore, by dévissage, we can assume that $N$ is placed in degree $0$.

Therefore, it suffices to show that for all $n$, the map $$\mathrm{Ext}^n_{\Rep^A(G_k,R)}(R,N)\rar H^n(G_k,N)$$ is an equivalence.

Now, we have  $$H^n(G_k,N)=\colim H^n(G_k/H,N^H)$$ where $H$ runs through the set of normal subgroups of finite index of $G_k$. 

Since $N$ is an Artin representation, any normal subgroup $H$ of $G_k$ contains a normal subgroup $H'$ such that the action of $H'$ on $N$ is trivial. Hence, the set of normal subgroups $H$ of finite index of $G_k$ such that $N^H=N$ is final in the set of all normal subgroups of finite index of $G_k$. 

Therefore, we have $H^n(G_k,N)=\colim H^n(G_k/H,N)$ where $H$ runs through the set of normal subgroups of finite index of $G_k$ such that the action of $H$ on $N$ is trivial.

In addition, if $H$ is a normal subgroup of $G_k$ which acts trivially on $N$, the abelian group $H^n(G_k/H,N)$ coincides with the $\mathrm{Ext}^n_{\Rep^A(G_k/H,R)}(R,N)$ and can by \cite[06XU]{stacks} be computed as the abelian group of classes of sequences $$0\rar R\rar M_1\rar \cdots \rar M_n \rar N\rar 0$$
where each $M_i$ is an $R$-linear representation of $G_k/H$ and two sequences $M$ and $M'$ are equivalent if there exists a third sequence $M''$ and morphisms of sequences $M\leftarrow M''\rar M'$.

Furthermore, using \cite[06XU]{stacks} again, the abelian group $\mathrm{Ext}^n_{\Rep^A(G_k,R)}(R,N)$ can be computed as the abelian group of classes of sequences $$0\rar R\rar M_1\rar \cdots \rar M_n \rar N\rar 0$$
where each $M_i$ is an Artin representation of $G_k$ and two sequences $M$ and $M'$ are equivalent if there exists a third sequence $M''$ and morphisms of sequences $M\leftarrow M''\rar M'$.

Any such sequence in fact is a sequence of $R$-linear representation of $G_k/H$ for some normal subgroup $H$ and any equivalence $M\leftarrow M''\rar M'$ between two sequences $M$ and $M'$ comes from an equivalence of sequences of $R$-linear representation of $G_k/H$ for some normal subgroup $H$. 
Therefore, the map $$\colim\mathrm{Ext}^n_{\Rep^A(G_k/H,R)}(R,N)\rar\mathrm{Ext}^n_{\Rep^A(G_k,R)}(R,N),$$ where $H$ runs through the set of normal subgroups of finite index of $G_k$ such that the action of $H$ on $N$ is trivial, is an equivalence.
This finishes the proof.
\end{proof}

\subsubsection{Generators of lisse and Ind-lisse sheaves}

\begin{proposition}\label{smooth generators}
    Let $S$ be a scheme and let $R$ be a regular ring. 
    \begin{enumerate}
        \item The thick stable subcategory of $\Shv(S_{\et},R)$ generated by the sheaves of the form $R_S(X)$ with $X$ finite and étale over $S$ is $\Shv_{\lis}(S,R)$.
        \item The localizing stable subcategory of $\Shv(S_{\et},R)$ generated by the sheaves of the form $R_S(X)$ with $X$ finite and étale is $\Shv_{\In\lis}(S,R)$.
    \end{enumerate}
\end{proposition}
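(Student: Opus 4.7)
The strategy is to reduce the claim via Balmer-Gallauer's finite permutation resolution theorem (\Cref{resolution de permutation}) to the case of a single Artin representation, for which permutation representations are directly related to finite étale covers of $S$.

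First I reduce to the case where $S$ is connected: since $S$ is noetherian, it has finitely many connected components $S = \coprod_j S_j$, and $\Shv(S_\et,R) = \prod_j \Shv((S_j)_\et,R)$ respects both $\Shv_{\lis}$ and the generators $R_S(X)$ (using that any finite étale $S$-scheme is a disjoint union of finite étale $S_j$-schemes). I then verify that $R_S(X)$ is lisse for $X\rar S$ finite étale: its fibers at a geometric point $\xi$ are free $R$-modules of finite rank (hence perfect) and it is locally constant, so by \Cref{loc cst}(1) it is lisse. Call $\mathcal{T}$ the thick subcategory of $\Shv(S_\et,R)$ generated by the $R_S(X)$, so $\mathcal{T}\subseteq \Shv_{\lis}(S,R)$.

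For the reverse inclusion, let $L\in \Shv_{\lis}(S,R)$. By \Cref{loc cst}(2), $L$ is bounded with cohomology sheaves in $\Loc_S(R)$. Using the truncation triangles of the ordinary t-structure and induction on the cohomological amplitude, it suffices to show that every $M\in \Loc_S(R)$, viewed in degree $0$, belongs to $\mathcal{T}$. Fix a geometric point $\xi$ and set $\pi=\pi_1^{\et}(S,\xi)$; by \Cref{description des faisceaux lisses}, $\xi^*M$ is an Artin representation, so the action factors through a finite quotient $G=\pi/N$. Apply \Cref{resolution de permutation} to $\xi^*M$ as an $R[G]$-module: the complex $\xi^*M\oplus \xi^*M[1]$ is quasi-isomorphic, as a complex of $R[G]$-modules, to a bounded complex $P^\bullet$ whose terms are finite direct sums of permutation modules $R[G/H_i]$. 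Writing $\widetilde{H_i}$ for the preimage of $H_i$ in $\pi$ (an open subgroup), each $R[G/H_i]=R[\pi/\widetilde{H_i}]$ corresponds under $\xi^*$ to $R_S(X_i)$ for the connected finite étale cover $X_i$ of $S$ classified by $\widetilde{H_i}$; thus $P^\bullet$ lifts to a bounded complex $\widetilde{P}^\bullet$ in $\Loc_S(R)\subset \Sh(S_\et,R)$ with terms finite sums of $R_S(X)$'s.

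Since $\Loc_S(R)$ is a weak Serre subcategory of the heart $\Sh(S_\et,R)$ (\Cref{loc cst}), the quasi-isomorphism $\widetilde{P}^\bullet\simeq M\oplus M[1]$ holds in $\Shv(S_\et,R)$ via the canonical realization $\mathcal{D}^b(\Loc_S(R))\rar \Shv(S_\et,R)$. The image of $\widetilde{P}^\bullet$ in $\Shv(S_\et,R)$ is built by finitely many iterated cofiber sequences from the shifts $\widetilde{P}^{-k}[k]$, so it belongs to $\mathcal{T}$. Hence $M\oplus M[1]\in \mathcal{T}$, and $M$, being a retract, also lies in $\mathcal{T}$. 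This completes part (1).

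Part (2) is then formal: by definition $\Shv_{\In\lis}(S,R)$ is the localizing subcategory of $\Shv(S_\et,R)$ generated by $\Shv_{\lis}(S,R)$, and (1) gives $\Shv_{\lis}(S,R)=\mathcal{T}$, so the localizing subcategory generated by $\Shv_{\lis}(S,R)$ coincides with the localizing subcategory generated by the $R_S(X)$ for $X$ finite étale. The main obstacle is the transfer of the Balmer-Gallauer quasi-isomorphism from the derived category of $R[G]$-modules to the $\infty$-category $\Shv(S_\et,R)$; everything else is dévissage and the identification of permutation modules with representable sheaves $R_S(X)$ through \Cref{description des faisceaux lisses}.
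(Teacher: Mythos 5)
Your proof is correct and follows the same strategy as the paper: dévissage to the heart via \Cref{loc cst}, application of the Balmer--Gallauer permutation resolution (\Cref{resolution de permutation}) to $M\oplus M[1]$, identification of permutation modules $R[G/H]$ with the sheaves $R_S(X)$ for $X$ finite étale, transfer through an exact realization functor into $\Shv(S_{\et},R)$, and the retract trick to extract $M$. The only cosmetic difference is that you factor the transfer through $\mc{D}^b(\Loc_S(R))$ whereas the paper's \Cref{permutation resolution 2} uses $\mc{D}^b(\Rep(G,R))$ directly; these are interchangeable ways of phrasing the same step.
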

\begin{proof} By definition of Ind-lisse sheaves the second assertion follows from the first. 

To prove the first assertion, first note that $\Shv_{\lis}(S,R)$ is a thick stable subcategory of $\Shv(S_{\et},R)$. Furthermore, if $X$ is a finite étale $S$-scheme, the sheaf $R_S(X)$ is dualizable with dual itself and is therefore lisse. Hence, the thick stable subcategory of $\Shv(S_{\et},R)$ generated by the sheaves of the form $R_S(X)$ with $X$ finite and étale over $S$ is contained in $\Shv_{\lis}(S,R)$.

By \Cref{loc cst}, lisse sheaves are bounded with respect to the ordinary t-structure. Therefore, it suffices by dévissage to prove that any locally constant sheaf lies in the thick subcategory generated by the sheaves of the form $R_S(X)$ with $X$ finite and étale over $S$ which follows from the stronger \Cref{permutation resolution 2} below. 
\end{proof}
\begin{lemma}\label{permutation resolution 2} Let $S$ be a scheme and let $R$ be a regular ring and let $\mc{C}(S,R)$ be the smallest subcategory of $\Shv(S_{\et},R)$ which contains the sheaves of the form $R_S(X)$ with $X$ finite and étale over $S$ and is closed under finite colimits, extensions and retracts. Then, the category $\Loc_S(R)$ is contained in $\mc{C}(S,R)$.
\end{lemma}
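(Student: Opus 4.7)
The plan is to reduce the statement to the Balmer--Gallauer permutation resolution result (Proposition \ref{resolution de permutation}) via the equivalence between locally constant sheaves and Artin representations of the étale fundamental group (Proposition \ref{description des faisceaux lisses}). Throughout, write $\mc{E}$ for the smallest subcategory of $\Shv(S_{\et},R)$ containing the sheaves $R_S(X)$ with $X$ finite étale over $S$ and closed under finite colimits, extensions and retracts.

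First, I would reduce to the case where $S$ is connected. Since all schemes are noetherian, $S$ has finitely many connected components $S_1,\dots,S_n$ which are open and closed in $S$, and $\Shv(S_{\et},R) \simeq \prod_i \Shv((S_i)_{\et},R)$. A sheaf $L\in\Loc_S(R)$ decomposes as a finite direct sum of sheaves $L_i\in\Loc_{S_i}(R)$ pushed forward from $S_i$, and similarly finite étale $S_i$-schemes are, via the inclusion $S_i \hookrightarrow S$, finite étale $S$-schemes. Hence it suffices to treat each connected component, so we may assume $S$ is connected.

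Next, fix a geometric point $\xi$ of $S$ and write $\pi=\pi_1^{\et}(S,\xi)$. By Proposition \ref{description des faisceaux lisses}, the fiber functor $\xi^*$ identifies $\Loc_S(R)$ with $\Rep^A(\pi,R)$, so $L$ corresponds to an Artin representation $M$, whose action factors through some finite quotient $G=\pi/N$. Let $p\colon \widetilde{X}\to S$ be the Galois cover corresponding to $N$; its group is $G$. For any subgroup $H\leq G$, the finite étale $S$-scheme $\widetilde{X}/H$ satisfies $\xi^*R_S(\widetilde{X}/H)=R[G/H]$ by Proposition \ref{description des faisceaux lisses}, viewed as an $R[\pi]$-module via $\pi\twoheadrightarrow G$.

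Apply Proposition \ref{resolution de permutation} to $G$ and the representation $M$: the complex $M\oplus M[1]$ of $R[G]$-modules is quasi-isomorphic to a bounded complex $P_\bullet$ whose terms are finite direct sums of permutation modules $R[G/H_i]$. Pulling back along $(\xi^*)^{-1}$, the object $L\oplus L[1]$ of $\Shv(S_{\et},R)$ is quasi-isomorphic to a bounded complex $\mc{P}_\bullet$ each of whose terms is a finite direct sum of sheaves of the form $R_S(\widetilde{X}/H_i)$, which are generators of $\mc{E}$. A bounded complex in a stable category is built from its terms by finitely many iterated cofibers (finite colimits) and shifts, and in particular lies in $\mc{E}$; therefore $L\oplus L[1]\in\mc{E}$. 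Finally, $L$ is a direct summand of $L\oplus L[1]$ in $\Shv(S_{\et},R)$, hence a retract, so $L\in\mc{E}$ by closure under retracts.

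The main subtlety is the correct identification, under the fiber functor, of the permutation module $R[G/H]$ with the sheaf $R_S(\widetilde{X}/H)$ associated to an explicit finite étale $S$-scheme, so that the Balmer--Gallauer resolution, a priori a statement about representations, really translates into a resolution by sheaves of the required form. Once this dictionary is set up, the argument is purely formal.
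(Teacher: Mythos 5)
Your proof is correct and follows essentially the same route as the paper: reduce to $S$ connected, pass through the equivalence $\Loc_S(R)\simeq\Rep^A(\pi_1^{\et}(S,\xi),R)$, factor through a finite quotient $G$, apply the Balmer--Gallauer permutation resolution of $M\oplus M[1]$, translate permutation modules $R[G/H]$ into sheaves $R_S(X)$ for finite étale $X/S$, and conclude by closure under finite colimits, extensions and retracts. The only (harmless) stylistic difference is that you make the Galois cover $\widetilde{X}$ and the schemes $\widetilde{X}/H$ explicit, whereas the paper invokes the Galois--Grothendieck correspondence abstractly.
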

\begin{proof}
We can assume that the scheme $S$ is connected. Let $\xi$ be a geometric point of $S$ and let $M$ be a locally constant étale sheaf. By \Cref{description des faisceaux lisses}, the sheaf $M$ corresponds to an Artin representation $N$ of $\pi_1^{\et}(S,\xi)$ through the fiber functor associated to $\xi$.
Let $G$ be a finite quotient of $\pi_1^{\et}(S,\xi)$ through which $N$ factors. Then, by \cite[1.3.1]{aps} (which is a result of Balmer and Gallauer) the complex $M \oplus M[1]$ of finite $R[G]$-modules is quasi-isomorphic to a complex of the form $$P=\cdots \rar 0 \rar P_n \rar P_{n-1} \rar \cdots \rar P_0 \rar 0 \rar \cdots$$ where each $P_i$ is placed in degree $-i$ and is isomorphic to $R[G/H_1]\oplus \cdots \oplus R[G/H_m]$ for some subgroups $H_1,\ldots,H_m$ of $G$.
	
Furthermore, we have an exact functor 
$$\mc{D}^b(\Rep(G,R))\rar \Shv(S_{\et},R)$$
and if $H$ is a subgroup of $G$, the finite set $G/H$ is endowed with an action of $\pi_1^{\et}(S,\xi)$ and therefore corresponds to a finite locally constant sheaf on $S_{\et}$, and therefore to a finite étale $S$-scheme $X$. The image of $R[G/H]$ through the functor above is then $R_S(X)$.
	
Thus, the object $M\oplus M[1]$, and therefore the object $M$ itself, belong to the smallest subcategory which contains the sheaves of the form $R_S(X)$ with $X$ finite and étale over $S$ and is closed under finite colimits, extensions and retracts.
\end{proof}

Another consequence of \Cref{permutation resolution 2} is the following statement. 
\begin{proposition}\label{smooth generators t-structure}
    Let $S$ be a scheme and let $R$ be a regular ring. 
    \begin{enumerate}
        \item The smallest subcategory of $\Shv_{\lis}(S,R)$ which contains the sheaves of the form $R_S(X)$ with $X$ finite and étale over $S$ and is closed under finite colimits, extensions and retracts is the subcategory $\Shv_{\lis}^{\leqslant 0}(S,R)$ of t-non-positive objects of the ordinary t-structure on $\Shv_{\lis}(S,R)$.
        \item The ordinary t-structure on the stable category $\Shv_{\In\lis}(S,R)$ is generated by the sheaves of the form $R_S(X)$ with $X$ finite and étale over $S$.
    \end{enumerate}
\end{proposition}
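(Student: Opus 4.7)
The plan is to deduce both assertions from the finite permutation resolution of \Cref{permutation resolution 2} together with the fact that the ordinary t-structure on $\Shv(S_{\et},R)$ is compatible with filtered colimits; part (2) will be reduced to part (1).

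For (1), let $\mc{S}$ denote the smallest subcategory of $\Shv_{\lis}(S,R)$ containing the sheaves $R_S(X)$ for $X$ finite étale over $S$ and closed under finite colimits, extensions and retracts. The inclusion $\mc{S} \subseteq \Shv_{\lis}^{\leqslant 0}(S,R)$ is immediate: the generators lie in the heart of the ordinary t-structure, and the non-positive part of any t-structure on a stable category is automatically closed under finite colimits (suspension is a finite colimit and preserves non-positivity), extensions and retracts. For the reverse inclusion, I would argue by induction on the amplitude of the cohomology. Any $M \in \Shv_{\lis}^{\leqslant 0}(S,R)$ has cohomology concentrated in some range $[a,0]$ with $a \leqslant 0$, since lisse sheaves have perfect, hence bounded, fibers by \Cref{loc cst}(1). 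The base case $a=0$ reduces via \Cref{loc cst}(2) to $M \in \Loc_S(R)$ and is handled by \Cref{permutation resolution 2}. For the inductive step, I would use the truncation triangle $\tau^{\leqslant a} M \to M \to \tau^{\geqslant a+1} M$, which lies in $\Shv_{\lis}(S,R)$ by \Cref{loc cst}(2); the left term is a positive shift of an object of $\Loc_S(R)$ (hence in $\mc{S}$ by \Cref{permutation resolution 2} together with closure of $\mc{S}$ under suspensions), while the right term lies in $\mc{S}$ by the induction hypothesis.

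For (2), let $t_0$ denote the t-structure of \Cref{t-structure generated} generated by the sheaves $R_S(X)$ for $X$ finite étale over $S$. By definition $t_0^{\leqslant 0}$ is the smallest subcategory of $\Shv_{\In\lis}(S,R)$ closed under small colimits and extensions and containing these generators. The inclusion $t_0^{\leqslant 0} \subseteq \Shv_{\In\lis}^{\leqslant 0}(S,R)$ proceeds as in (1), noting additionally that the ordinary t-structure on $\Shv(S_{\et},R)$ is compatible with filtered colimits so that $\Shv_{\In\lis}^{\leqslant 0}(S,R)$ is closed under small colimits. Conversely, given $M \in \Shv_{\In\lis}^{\leqslant 0}(S,R)$, I would invoke the description, recorded in the proof of \Cref{loc cst}(3), of an Ind-lisse sheaf as a filtered colimit $M = \colim_i M_i$ of lisse sheaves. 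Applying $\tau^{\leqslant 0}$, which commutes with filtered colimits, yields $M = \colim_i \tau^{\leqslant 0} M_i$ with each $\tau^{\leqslant 0} M_i \in \Shv_{\lis}^{\leqslant 0}(S,R)$; by (1) each of these lies in $\mc{S} \subseteq t_0^{\leqslant 0}$, and closure of $t_0^{\leqslant 0}$ under small colimits yields $M \in t_0^{\leqslant 0}$.

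The argument is essentially formal once \Cref{permutation resolution 2} is in hand and offers no serious obstacle. The only minor technical point is the verification that the subcategory $\mc{S}$ of (1) is closed under positive shifts of locally constant sheaves, needed in order to promote \Cref{permutation resolution 2} from degree zero to an arbitrary non-positive degree, which I would handle by observing that suspension in a stable category is a finite colimit.
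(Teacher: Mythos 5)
Your proof is correct and follows essentially the same approach as the paper's: dévissage from \Cref{permutation resolution 2} for part (1), and reduction to part (1) via compatibility of the ordinary t-structure with filtered colimits for part (2). The paper merely writes ``by dévissage'' where you have spelled out the induction on cohomological amplitude, but the content is identical.
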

\begin{proof} By dévissage, the first assertion follows from \Cref{permutation resolution 2}.

Let $M$ be a t-non-positive Ind-lisse étale sheaf. Write $M$ as a filtered colimit of lisse étale sheaves $M_i$. Since the ordinary t-structure is compatible with filtered colimits, we have $$M=\tau_{\leqslant 0}(M)=\tau_{\leqslant 0}(\colim M_i)=\colim\tau_{\leqslant 0}(M_i).$$

Using the first assertion, each sheaf $\tau_{\leqslant 0}(M_i)$ belongs to the smallest subcategory of the stable category $\Shv_{\In\lis}(S,R)$ which contains the sheaves of the form $R_S(X)$ with $X$ finite and étale over $S$ and is closed under small colimits and extensions. Therefore, so does the sheaf $M$. This finishes the proof. 
\end{proof}

\begin{proposition}\label{p-loc cons 2} Let $S$ be a scheme and let $R$ be a regular ring. Then, with the notations of \Cref{p-loc}, the canonical functor $$\Shv_{\lis}(S,R)_\pp\rar \Shv_{\lis}(S,R\otimes_\Z \Z_\pp)$$ is an equivalence.   
\end{proposition}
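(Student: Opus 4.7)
The plan is to mirror the proof of \Cref{p-loc cons 1} in the lisse setting, using the description of lisse sheaves as a thick subcategory generated by nice objects. First I would check that the functor is well-defined: since $\sigma_{\pp}^{*}$ is symmetric monoidal and preserves dualizable objects, it sends $\Shv_{\lis}(S,R)$ into $\Shv_{\lis}(S,R\otimes_{\Z}\Z_{\pp})$; since the target is a thick subcategory of a presentable stable category, it is idempotent complete and $\Z_{\pp}$-linear, so the universal property of $\pp$-localization (\Cref{p-loc 1}(1)) gives the canonical functor
$$\Phi\colon \Shv_{\lis}(S,R)_{\pp}\longrightarrow \Shv_{\lis}(S,R\otimes_{\Z}\Z_{\pp}).$$

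Next I would establish full faithfulness. By \Cref{smooth generators}, every lisse étale sheaf lies in the thick subcategory generated by the $R_{S}(X)$ with $X$ étale over $S$, hence is constructible in the sense of \Cref{constr obj}. Therefore \Cref{commutation}(3), applied to the morphism $\sigma_{\Z_{\pp}}\colon R\to R\otimes_{\Z}\Z_{\pp}$, shows that for any lisse $M$ and any $N$ in $\Shv(S_{\et},R)$ the canonical map
$$\sigma_{\pp}^{*}\Map_{\Shv(S_{\et},R)}(M,N)\longrightarrow \Map_{\Shv(S_{\et},R\otimes_{\Z}\Z_{\pp})}(\sigma_{\pp}^{*}M,\sigma_{\pp}^{*}N)$$
is an equivalence. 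Taking $N$ also lisse, this shows that the naive $\pp$-localization functor $\Shv_{\lis}(S,R)_{\pp}^{\mathrm{na}}\to \Shv_{\lis}(S,R\otimes_{\Z}\Z_{\pp})$ is fully faithful. Since the target is idempotent complete, the induced functor $\Phi$ from the idempotent completion $\Shv_{\lis}(S,R)_{\pp}$ remains fully faithful.

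Finally, for essential surjectivity, I would observe that the essential image of $\Phi$ is a thick subcategory of $\Shv_{\lis}(S,R\otimes_{\Z}\Z_{\pp})$, as $\Phi$ is exact between idempotent-complete stable categories and fully faithful. For $X$ finite étale over $S$, the sheaf $R_{S}(X)$ is in $\Shv_{\lis}(S,R)$ and its image under $\Phi$ is the corresponding generator $(R\otimes_{\Z}\Z_{\pp})_{S}(X)$ of the target. By \Cref{smooth generators}, these generators exhaust $\Shv_{\lis}(S,R\otimes_{\Z}\Z_{\pp})$ as a thick subcategory, so $\Phi$ is essentially surjective and therefore an equivalence.

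There is no real obstacle here: the only point to verify carefully is that lisse sheaves lie in the class of constructible objects to which \Cref{commutation}(3) applies, which is immediate from \Cref{smooth generators}.
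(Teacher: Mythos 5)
Your proof is correct and follows essentially the same path as the paper, which simply asserts ``the proof is the same as the proof of \Cref{p-loc cons 1}'': full faithfulness on the naive $\pp$-localization via \Cref{commutation}(3), passage to the idempotent completion, and essential surjectivity via thickness of the image together with the generating family from \Cref{smooth generators}. The one point you spell out that the paper leaves implicit — that lisse sheaves are constructible in the sense of \Cref{constr obj} because $R$ is regular, by \Cref{smooth generators} — is exactly the right observation to make.
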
 
\begin{proof} The proof is the same as the proof of \Cref{p-loc cons 1}.
\end{proof}

\subsection{Computations of \'Etale Cohomology and \'Etale Motivic Cohomology}

Recall that the étale motivic cohomology groups of a scheme $X$ are the groups 
$$\Hl^{n,m}_{\mc{M},\et}(X,R)=\Hom_{\DMtri_{\et}(X,R)}(\un_X,\un_X[n](m)).$$

We will need the following computations. The first one is a slight generalization of a \cite[11.1]{ayo14}.

\begin{lemma}\label{n<0 m<=0}Let $S$ be a scheme, let $R$ be a good ring and let $n<0$ and $m\leqslant 0$ be integers. Then, $$\Hl^{n,m}_{\mc{M},\et}(S,R)=0.$$
\end{lemma}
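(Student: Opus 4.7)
The plan is to invert the Tate twist to reduce the statement to a Hom computation inside the effective category $\DA_{\et}^{\eff}(S,R)$, then to identify the resulting Hom group as reduced étale cohomology of an iterated smash product of $\mathbb{G}_{m,S}$ with itself, and finally to conclude by a connectivity argument.

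First, tensor invertibility of $\un_S(1)$ in $\DMtri_{\et}(S,R)$ gives
$$H^{n,m}_{\mc{M},\et}(S,R)=\Hom_{\DMtri_{\et}(S,R)}(\un_S(-m),\un_S[n]).$$
Since $-m\geqslant 0$, both arguments lie in the essential image of the stabilization functor $\Sigma^\infty$ of \Cref{chain adj}, and by the cancellation property for étale motives (cf. the $h$-motive description of \cite[5.5.7]{em}), this Hom group may be computed inside $\DA_{\et}^{\eff}(S,R)$.

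Second, writing $k=-m\geqslant 0$ and iterating the split exact triangle
$$\un_S\rar M_S^{\eff}(\mathbb{G}_{m,S})\rar \un_S(1)[1]$$
attached to the unit section $S\hookrightarrow \mathbb{G}_{m,S}$, one obtains the motivic Künneth decomposition
$$M_S^{\eff}(\mathbb{G}_{m,S}^k)\;\simeq\; \bigoplus_{j=0}^{k}\binom{k}{j}\,\un_S(j)[j].$$
Equivalently, $\un_S(k)[k]$ is identified with the $k$-fold tensor power of the reduced motive $\widetilde{M}_S^{\eff}(\mathbb{G}_{m,S}):=\operatorname{cofib}\bigl(\un_S\to M_S^{\eff}(\mathbb{G}_{m,S})\bigr)$, and the target Hom group reads as the reduced étale cohomology $\widetilde{H}^{n+k}_{\et}(\mathbb{G}_{m,S}^{\wedge k},R)$.

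Third, I conclude by a connectivity argument. The vanishing $\widetilde{H}^{N}_{\et}(\mathbb{G}_{m,S},R)=0$ for $N\leqslant 0$ is clear, because the unit section $S\to \mathbb{G}_{m,S}$ provides a retraction of the structural map $\mathbb{G}_{m,S}\to S$, splitting étale cohomology into the pullback from $S$ and a ``reduced'' complement; the two already agree in degrees $\leqslant 0$ since étale cohomology vanishes in negative degrees. Iterating this via the split triangle, the $k$-fold tensor power $\widetilde{M}_S^{\eff}(\mathbb{G}_{m,S})^{\otimes k}$ has reduced cohomology concentrated in degrees $\geqslant k$, so $\widetilde{H}^{n+k}_{\et}(\mathbb{G}_{m,S}^{\wedge k},R)=0$ whenever $n+k<k$, i.e., whenever $n<0$.

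The main obstacle will be making the last ``connectivity adds under tensor products'' step precise: while $\widetilde{M}_S^{\eff}(\mathbb{G}_{m,S})$ is clearly $1$-connective in the sense that $\Hom(\widetilde{M}_S^{\eff}(\mathbb{G}_{m,S}),\un_S[N])$ vanishes for $N\leqslant 0$, the propagation to iterated tensor powers requires a careful analysis of the split exact triangle above tensored with $\widetilde{M}_S^{\eff}(\mathbb{G}_{m,S})^{\otimes(k-1)}$. Concretely, one must isolate the top-weight summand $\Hom(\un_S(k),\un_S[n])$ from $H^{n+k}_{\et}(\mathbb{G}_{m,S}^k,R)$ via the motivic decomposition and a joint induction on $k$, discarding the lower-weight contributions $\Hom(\un_S(j),\un_S[n+k-j])$ for $j<k$ by the inductive hypothesis together with the split triangles coming from inserting the value $1$ in each factor of $\mathbb{G}_{m,S}^k$.
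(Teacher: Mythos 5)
Your approach is genuinely different from the paper's. The paper follows Ayoub's dévissage (\cite[11.1, \'Etape A]{ayo14}) to reduce to $R=\Z/p\Z$ and $R$ a $\Q$-algebra, disposing of the first case by the rigidity theorem (where the Hom group becomes negative-degree étale cohomology) and the second by quoting Ayoub's theorem. You instead try to stay inside the effective category and prove the vanishing by a Künneth decomposition of $M_S^{\eff}(\mathbb{G}_{m,S}^k)$ and a connectivity induction. Unfortunately, I don't think the induction closes.

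There are two genuine gaps. First, the passage from $\DM_{\et}$ to $\DA_{\et}^{\eff}$ requires the cancellation theorem for étale motives with \emph{integral} coefficients. This is not a free move: for $\Z$-coefficients the full faithfulness of $\Sigma^\infty$ is itself established by exactly the kind of dévissage (to torsion via rigidity and to $\Q$ via Voevodsky's cancellation) that the paper's proof uses, so you have not really avoided the dévissage. Second, and more seriously, the ``connectivity adds under tensor products'' step breaks at the boundary $n=-1$. Using the projection formula for $g\colon \mathbb{G}_{m,S}\to S$, the split triangle gives, for each $k\geqslant 1$,
$$\Hom_S(\un(k),\un[n]) \oplus \Hom_S(\un(k-1),\un[n+1]) \;\simeq\; \Hom_{\mathbb{G}_{m,S}}(\un(k-1),\un[n+1]).$$
The inductive hypothesis (vanishing for weight $k-1$ and strictly negative shift, over all base schemes) kills the right-hand side only when $n+1<0$, i.e.\ $n\leqslant -2$. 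When $n=-1$ the right-hand side is $\Hom_{\mathbb{G}_{m,S}}(\un(k-1),\un)$, a degree-zero weight-$(k-1)$ group that is in general nonzero, and the direct-sum decomposition alone does not force the top-weight summand $\Hom_S(\un(k),\un[-1])$ to vanish. Concretely, étale cohomology with torsion constant coefficients is not $\mathbb{G}_m$-invariant — already $H^1_{\et}(\mathbb{G}_{m,\mathbb{C}},\Z/n)=\Z/n\neq 0=H^1_{\et}(\Spec\mathbb{C},\Z/n)$ — so the lower-weight pieces you hope to ``discard by the inductive hypothesis'' do not all vanish, and the top-weight piece is not isolated by the split triangles. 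Your argument as written proves the vanishing for $n\leqslant -2$ but leaves $n=-1$ open, which is precisely where the content of the lemma lies.
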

\begin{proof} By \Cref{conservativity}, we are reduced to the case where $R$ is a $\Z/p\Z$-algebra or a $\Q$-algebra. 

In the first case, by the rigidity theorem \cite[3.2]{bachmanrigidity}, the category $\DM_{\et}(S,R)$ is equivalent to the category $\mc{D}(S[1/p]_{\et},R)$ and we have $$\Hl^{n,m}_{\mc{M},\et}(X,R)=\Hl^n_{\et}(X,R(m))$$ which vanishes since the integer $n$ is negative. In the case when the ring $R$ is a $\Q$-algebra, Ayoub's result \cite[11.1]{ayo14} applies.
\end{proof}

We will also need to compute étale cohomology with coefficients invertible on the base scheme and with constant coefficients. First recall:
\begin{proposition}\label{den}(Deninger) Let $X$ be a geometrically unibranch scheme which is the disjoint union of its irreducible components. Then, we have $$\Hl^i_{\et}(X,\Q)=\begin{cases} \Q^{\pi_0(X)} \text{ if }i=0. \\
0 \text{ otherwise.}\end{cases}$$
\end{proposition}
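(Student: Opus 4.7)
The plan is to prove the proposition by successive reductions to a setting in which one can either invoke Deninger's original computation or reduce to the continuous cohomology of a profinite group.

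First, since we work with noetherian schemes of finite dimension, $X$ has finitely many connected components, each of which is again geometrically unibranch (being both open and closed). Étale cohomology commutes with finite disjoint unions and $\Hl^0_{\et}(X,\Q)=\Q^{\pi_0(X)}$ is immediate on a disjoint union of connected schemes. I can therefore assume $X$ is connected and the goal becomes $\Hl^i_{\et}(X,\Q)=0$ for $i\geqslant 1$ (with $\Hl^0_{\et}(X,\Q)=\Q$).

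Next I reduce to the case where $X$ is normal. Because $X$ is geometrically unibranch, the normalization morphism $\nu\colon\widetilde{X}\rar X$ is finite, surjective and radicial; in particular it is a universal homeomorphism. By the topological invariance of the small étale site (\cite[VIII.1.1]{sga4}, compare \cite[04DY]{stacks}), the functor $\nu^{*}$ induces an equivalence of small étale topoi, hence $\Hl^{i}_{\et}(X,\Q)=\Hl^{i}_{\et}(\widetilde{X},\Q)$. Thus I may assume $X$ is normal and connected.

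Now I trade the $\Q$-coefficients for $\Z$-coefficients. Since $X$ is noetherian, the functor $\Hl^{i}_{\et}(X,-)$ commutes with filtered colimits of abelian sheaves. Writing $\underline{\Q}=\colim_{n}\underline{\Z}$ along the transition maps given by multiplication by $n$, one gets the natural identification
$$\Hl^{i}_{\et}(X,\Q)=\Hl^{i}_{\et}(X,\Z)\otimes_{\Z}\Q,$$
so it is enough to show that $\Hl^{i}_{\et}(X,\Z)$ is a torsion abelian group for $i\geqslant 1$.

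Finally, the computation on a normal connected noetherian base follows from the interpretation of étale cohomology with a locally constant coefficient sheaf as continuous cohomology of the étale fundamental group. Choosing a geometric point $\bar{x}$ of $X$, one has $\Hl^{i}_{\et}(X,\Z)=\Hl^{i}_{\mathrm{cont}}(\pi_{1}^{\et}(X,\bar{x}),\Z)$. In degree $1$ the right hand side is $\Hom_{\mathrm{cont}}(\pi_{1}^{\et}(X,\bar{x}),\Z)$, which vanishes because the source is profinite and the target is discrete Hausdorff and torsion-free. In higher degrees, one uses the long exact sequences attached to $0\rar \Z\overset{n}{\rar}\Z\rar \Z/n\rar 0$: since $\Hl^{j}_{\mathrm{cont}}(G,\Z/n)$ is torsion (and finite when $G$ is topologically finitely generated), multiplication by $n$ is surjective on $\Hl^{i}_{\mathrm{cont}}(G,\Z)$ modulo torsion, forcing $\Hl^{i}_{\mathrm{cont}}(G,\Z)\otimes\Q=0$ for $i\geqslant 1$. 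The hard part of the argument is this last step — establishing that continuous $\Z$-cohomology of a profinite group is torsion in positive degree — which is the content of Deninger's original result \cite{deninger} to which one can simply refer.
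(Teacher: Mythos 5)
Your opening reductions are sound and match the paper's: passing to connected components and replacing $X$ by its normalization via the topological invariance of the small étale site (using that the normalization of a geometrically unibranch scheme is a universal homeomorphism) is exactly what the paper does. The reduction of the $\Q$-vanishing to the torsion property of $\Hl^{i}_{\et}(X,\Z)$ for $i\geqslant 1$ is also legitimate.

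The gap is in the final step. The claimed identification
$\Hl^{i}_{\et}(X,\Z)=\Hl^{i}_{\mathrm{cont}}(\pi_{1}^{\et}(X,\bar{x}),\Z)$
is false for general normal connected noetherian schemes; étale cohomology with a locally constant coefficient sheaf agrees with continuous cohomology of $\pi_1^{\et}$ only in degrees $\leqslant 1$ (or when $X$ is an étale $K(\pi,1)$), not in general. A concrete counterexample already appears for $X=\mb{P}^1_{\overline{k}}$ over an algebraically closed field $\overline{k}$: here $\pi_1^{\et}(X)=1$, so the right-hand side vanishes in all positive degrees for any coefficient module, whereas $\Hl^2_{\et}(\mb{P}^1_{\overline{k}},\mu_n)\simeq \Z/n\Z$ for $n$ invertible in $\overline{k}$ (and, as the paper's \Cref{h-cohomologie motivique sur un corps} shows, $\Hl^3_{\et}(\mb{P}^1_{\overline{k}},\Z)$ is a nonzero torsion group). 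So one cannot reduce the question to profinite group cohomology. Relatedly, the last sentence mischaracterizes Deninger's theorem: the statement that continuous $\Z$-cohomology of a profinite group is torsion in positive degree is a standard and easy fact, whereas Deninger's result \cite[2.1]{deninger} is precisely the harder scheme-theoretic statement that $\Hl^{i}_{\et}(X,\Q)$ vanishes for $i\geqslant 1$ when $X$ is normal (proved by comparing the étale and Zariski topologies rationally). The paper's proof simply cites Deninger for the normal case and then performs the unibranch-to-normal reduction; your attempt to replace that citation by a $\pi_1^{\et}$-argument does not go through.
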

\begin{proof}By \cite[2.1]{deninger}, this result is true when the scheme $X$ is normal. If $X$ is a geometrically unibranch scheme which is the disjoint union of its irreducible components, the normalization map $\nu\colon X^\nu \rar X$ is a universal homeomorphism by \cite[0GIQ]{stacks} and therefore, the lemma follows from the topological invariance of the small étale site \cite[18.1.2]{ega4}.
\end{proof}

If $X$ is a scheme and $\ell$ is a prime number which is invertible on $X$, write $b_n(X,\ell)$ the dimension (over $\Q_\ell$) of the $n$-th $\ell$-adic cohomology group of $X$. Let $T_n(X,\ell)$ be the torsion subgroup of the group $\Hl^n_{\et}(X,\Z_\ell)$. Finally denote by $\Z(\ell^\infty)$ the Prüfer $\ell$-group. Then we have the following result. 
\begin{proposition}\label{h-cohomologie motivique sur un corps} Let $X$ be a geometrically unibranch scheme which is the disjoint union of its irreducible components, let $R$ be a ring which is flat over $\Z$ and in which all the residue characteristics of $X$ are invertible and let $e(X)$ be the set of residue characteristic exponents of $X$. Then, $$\Hl^{n}_{\et}(X,R)=\begin{cases}R^{\pi_0(X)} &\text{ if }n=0\\
0 &\text{ if }n=1\\
R \otimes_\Z \left[\bigoplus\limits_{\ell\notin e(X)}\left(\Z(\ell^\infty)^{b_{n-1}(X,\ell)}\oplus T_{n-1}(X,\ell)\right)\right]  &\text{ if }n\geqslant 2
\end{cases}$$
\end{proposition}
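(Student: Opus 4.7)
The plan is to reduce to the case $R=\Z$ via flatness, and then to combine two Bockstein long exact sequences with Deninger's vanishing (\Cref{den}): the integral sequence $0\to \Z\to\Q\to\Q/\Z\to 0$ and, for each prime $\ell$ invertible on $X$, the $\ell$-adic fiber sequence $\Z_\ell\to\Q_\ell\to\Z(\ell^\infty)$. Since $R$ is flat over $\Z$, hence torsion-free, it is a filtered colimit of finitely generated free $\Z$-submodules; étale cohomology on the noetherian scheme $X$ commutes with filtered colimits of abelian sheaves and with finite direct sums, so there is a canonical identification $\Hl^n_{\et}(X,R)\simeq \Hl^n_{\et}(X,\Z)\otimes_\Z R$ for every $n$. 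It will therefore suffice to compute $\Hl^n_{\et}(X,\Z)$ and to monitor the effect of tensoring with $R$.

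The case $n=0$ is immediate: $\Hl^0_{\et}(X,\Z)=\Z^{\pi_0(X)}$. For $n\geq 1$ I apply the Bockstein long exact sequence attached to $0\to \Z\to\Q\to\Q/\Z\to 0$. Since $X$ is geometrically unibranch, \Cref{den} gives $\Hl^m_{\et}(X,\Q)=0$ for $m\geq 1$, while $\Hl^0_{\et}(X,\Q)=\Q^{\pi_0(X)}$ surjects onto $\Hl^0_{\et}(X,\Q/\Z)=(\Q/\Z)^{\pi_0(X)}$. The long exact sequence then forces $\Hl^1_{\et}(X,\Z)=0$ and, for $n\geq 2$, the isomorphism $\Hl^n_{\et}(X,\Z)\simeq \Hl^{n-1}_{\et}(X,\Q/\Z)$.

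To finish, I decompose $\Q/\Z=\bigoplus_\ell \Z(\ell^\infty)$ and use compatibility of étale cohomology with filtered colimits to write $\Hl^{n-1}_{\et}(X,\Q/\Z)=\bigoplus_\ell \Hl^{n-1}_{\et}(X,\Z(\ell^\infty))$. For $\ell\in e(X)$, the prime $\ell$ is by hypothesis invertible in $R$, so $R\otimes_\Z \Z(\ell^\infty)=0$ and that summand disappears after tensoring. For $\ell\notin e(X)$, $\ell$ is invertible on $X$ and the $\ell$-adic formalism applies: the long exact sequence of $\Z_\ell\to\Q_\ell\to\Z(\ell^\infty)$, combined with the identification $\Hl^m_{\et}(X,\Q_\ell)\simeq \Hl^m_{\et}(X,\Z_\ell)\otimes_{\Z_\ell}\Q_\ell\simeq \Q_\ell^{b_m(X,\ell)}$, shows that the natural map $\Hl^m_{\et}(X,\Z_\ell)\to \Hl^m_{\et}(X,\Q_\ell)$ has kernel the torsion subgroup $T_m(X,\ell)$ and cokernel $\Z(\ell^\infty)^{b_m(X,\ell)}$. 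Splicing yields, for each $\ell\notin e(X)$, a short exact sequence whose outer terms are $\Z(\ell^\infty)^{b_{n-1}(X,\ell)}$ and an appropriate $\ell$-adic torsion piece; it splits because $\Z(\ell^\infty)$ is divisible, hence injective in abelian groups. Summing over $\ell\notin e(X)$ and tensoring with $R$ yields the formula. The main technical point is the identification $\Hl^m_{\et}(X,\Q_\ell)\simeq \Hl^m_{\et}(X,\Z_\ell)\otimes_{\Z_\ell}\Q_\ell$, which rests on Mittag--Leffler for the inverse system $\{\Hl^m_{\et}(X,\Z/\ell^k)\}_k$, automatic since these groups are finite $\Z/\ell^k$-modules on the noetherian scheme $X$.
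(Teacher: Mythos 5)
Your argument follows the paper's essentially line for line: Deninger's vanishing (\Cref{den}), the Bockstein sequence for $\Z\subset\Q$, and the $\ell$-primary decomposition of $\Q/\Z$ combined with the $\ell$-adic fiber sequence $\Z_\ell\to\Q_\ell\to\Z(\ell^\infty)$. The only (cosmetic) difference is that the paper runs the Bockstein for $0\to\Z[1/e(X)]\to\Q\to\bigoplus_{\ell\notin e(X)}\Z(\ell^\infty)\to 0$, while you use the integral Bockstein and dispose of the $\ell\in e(X)$ summands after tensoring with $R$.

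The gap is exactly where you write ``an appropriate $\ell$-adic torsion piece.'' Carrying out the splice, the long exact sequence for $\Z_\ell\to\Q_\ell\to\Z(\ell^\infty)$ in degrees $n-1$ and $n$, together with the identification of kernel and cokernel you gave, yields
$$0\to \Z(\ell^\infty)^{b_{n-1}(X,\ell)}\to \Hl^{n-1}_{\et}(X,\Z(\ell^\infty))\to T_n(X,\ell)\to 0,$$
with the torsion of $\Hl^n$ — not of $\Hl^{n-1}$ — on the right, since the right-hand term is $\ker(\Hl^n(X,\Z_\ell)\to\Hl^n(X,\Q_\ell))$. So your argument actually produces the stated formula with $T_n(X,\ell)$ in place of $T_{n-1}(X,\ell)$, and you cannot conclude ``yields the formula'' from what you wrote. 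Your indexing is in fact the correct one: for an Enriques surface $X$ over $\C$ one has $\pi_1^{\et}(X)=\Z/2\Z$, so $\Hl^2_{\et}(X,\Z)\cong\Hl^1_{\et}(X,\Q/\Z)=\Hom(\Z/2\Z,\Q/\Z)=\Z/2\Z$, while $b_1(X,\ell)=T_1(X,\ell)=0$ for every $\ell$; the nonzero answer is accounted for by $T_2(X,2)=\Z/2\Z$, not $T_1$. You should make the torsion term explicit and flag the mismatch with the proposition as printed. A secondary issue: the claim that Mittag--Leffler is ``automatic since these groups are finite $\Z/\ell^k$-modules on the noetherian scheme $X$'' is false in this generality ($\Hl^1(\Spec(\Q),\Z/\ell\Z)$ is already infinite); the finiteness of $\Hl^m(X,\Z/\ell^k\Z)$, needed both for $\Hl^m(X,\Q_\ell)\cong\Hl^m(X,\Z_\ell)\otimes_{\Z_\ell}\Q_\ell$ and for $b_m$, $T_m$ to be meaningful finite invariants, has to be assumed, as the paper implicitly does.
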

\begin{proof}As $R$ is flat over $\Z$, we may suppose that $R$ is the subring $$\Z[1/e(X)]=\left\{\frac{n}{ p_1^{m_1}\cdots p_r^{m_r}}\mid p_i \in e(X)\right\}$$ of $\Q$. We have an exact sequence 

$$0\rar \Z[1/e(X)]\rar \Q\rar \bigoplus\limits_{\ell\notin e(X)}\Z(\ell^\infty)\rar 0.$$

Hence, by \Cref{den}, we get $$\Hl^n_{\et}(X,\Z[1/e(X)])=\begin{cases}\Z[1/e(X)]^{\pi_0(X)} &\text{ if }n=0\\
0 &\text{ if }n=1\\
\bigoplus\limits_{\ell\notin e(X)}\colim_N \Hl^{n-1}_{\et}(X,\Z/\ell^N\Z) &\text{ if }n\geqslant 2
\end{cases}$$
This yields the result.
\end{proof}
\begin{remark}\label{etale coh of curves} If a complete theory of abelian motives was available, all the $b_i(X,\ell)$ would coincide. Thus, for $n\geqslant 2$, we would have  $$\Hl^{n}_{\et}(X,R)=R\otimes_\Z \left[\Q/\Z[1/e(X)]^{b_i(X)}
\oplus\bigoplus\limits_{\ell\notin e(X)} T_{n-1}(X,\ell) \right]$$

Notice that, using Artin's Theorem \cite[XI.4.4]{sga4} or Deligne's work on Weil's conjectures (see \cite{WeilII}) all the $b_i(X,\ell)$ coincide if $X$ is defined over an algebraically closed field of characteristic $0$ or over $\overline{\mb{F}}_p$.

Finally, if $X$ is a curve over any separably closed field $k$, using \cite[IX.4.7]{sga4}, we have $$\Hl^{2}_{\et}(X,R)=(R\otimes_\Z \Q/\Z[1/p])^{2g} $$ 
$$\Hl^{3}_{\et}(X,R)=R\otimes_\Z \Q/\Z[1/p] $$ 

and $\Hl^{n}_{\et}(X,R)=0$ for $n \geqslant 4$.
\end{remark}
\subsection{Equivalence Between Smooth Artin Motives and Lisse \'Etale Sheaves over a Regular Base scheme}
In this paragraph, we give an explicit description of smooth Artin étale motives over a regular base scheme. This description will not work over a non-regular base scheme, even when it is normal (see \Cref{B7} below). 

\begin{theorem}\label{smooth Artin h-motives} Let $R$ be a regular good ring and let $S$ be a regular scheme. Assume that the residue characteristic exponents of $S$ are invertible in $R$.

The functor $\rho_!$ of \Cref{small et site 2} induces monoidal equivalences
$$\Shv_{\In\lis}(S,R)\longrightarrow \DM^{smA}_{\et}(S,R),$$
$$\Shv_{\lis}(S,R)\longrightarrow \DM^{smA}_{\et,c}(S,R).$$
\end{theorem}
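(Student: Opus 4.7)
The plan is to establish the fully faithfulness of $\rho_!$ on $\Shv_{\In\lis}(S,R)$, from which both equivalences (with their monoidal structure) follow formally. Indeed, $\rho_!$ is a colimit-preserving monoidal functor, and by \Cref{smooth generators} the categories $\Shv_{\In\lis}(S,R)$ and $\Shv_{\lis}(S,R)$ are respectively the localizing and thick subcategories of $\Shv(S_\et, R)$ generated by the sheaves $R_S(X)$ for $X/S$ finite étale, while $\DM^{smA}_\et(S,R)$ and $\DM^{smA}_{\et,c}(S,R)$ are the corresponding localizing and thick subcategories of $\DM_\et(S,R)$ generated by $M_S(X) = \rho_!(R_S(X))$. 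Fully faithfulness on the common generators therefore yields both equivalences at once; monoidality is inherited from the monoidality of $\rho_!$ established in \Cref{442em}.

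The central step is thus to show that for $X, Y$ finite étale over $S$, the map
\[ \Map_{\Shv(S_\et, R)}(R_S(X), R_S(Y)) \longrightarrow \Map_{\DM_\et(S, R)}(M_S(X), M_S(Y)) \]
induced by $\rho_!$ is an equivalence. Letting $Z = X \times_S Y$, which is again finite étale over $S$ and hence regular with residue characteristic exponents invertible in $R$, I would apply the compatibilities of $\rho_!$ with $f^*$ and $f_!$ recorded in \Cref{442em} together with the standard adjunctions on both sides (using that $f_! = f_*$ for finite étale $f$) to identify the left hand side with $R\Gamma_\et(Z, R)$ and the right hand side with $\Map_{\DM_\et(Z, R)}(\un_Z, \un_Z)$. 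The problem reduces to showing that the canonical comparison morphism between the étale cohomology of $Z$ with coefficients in $R$ and the weight-zero étale motivic cohomology of $Z$ is an equivalence.

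To conclude, I would appeal to the conservative family $(\sigma_\Q^*, \sigma_p^* \mid p \text{ prime})$ of \Cref{conservativity}, noting that by \Cref{commutation} these change-of-coefficients functors commute with both sides of the comparison since $\un_Z$ and $R_Z$ are constructible. For each prime $p$: if $p$ is invertible in $R$ then $R/pR = 0$ and there is nothing to check; otherwise the hypothesis forces $p$ to be invertible on $Z$, so the Rigidity Theorem \cite[5.5.4]{em} yields an equivalence $\rho_! \colon \Shv(Z_\et, R/pR) \xrightarrow{\sim} \DM_\et(Z, R/pR)$ and the comparison is an equivalence after $\sigma_p^*$. For $\sigma_\Q^*$, a further flatness reduction brings us to $R = \Q$, where on the étale side $Z$ is regular, hence geometrically unibranch, so $R\Gamma_\et(Z, \Q) = \Q^{\pi_0(Z)}$ concentrated in degree zero by \Cref{den}, while on the motivic side the identification of étale and Nisnevich motives with $\Q$-coefficients reduces $\Map_{\DM_\et(Z, \Q)}(\un_Z, \un_Z)$ to the Zariski cohomology of the constant sheaf, also equal to $\Q^{\pi_0(Z)}$ in degree zero. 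The main subtlety I expect will be verifying carefully that the natural comparison morphism matches these explicit identifications on the two sides, which is a question of naturality of the adjunction unit $R_Z \to \rho^!\rho_! R_Z$ rather than of direct computation.
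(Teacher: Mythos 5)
Your approach is essentially the paper's: reduce to full faithfulness on the finite étale generators, pass to $Z = X\times_S Y$ using dualizability, split the coefficient ring via the conservative family $(\sigma_\Q^*,\sigma_p^*)$, dispose of the torsion case by rigidity, and in the rational case compare \'etale cohomology of the regular geometrically unibranch scheme $Z$ with its weight-zero motivic cohomology. Two remarks.

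First, you flag as "the main subtlety" the need to check that the comparison morphism matches the explicit computations on the two sides, but you leave it unresolved. The paper disposes of it with a one-line observation: after reducing to $T$ connected \'etale over $S$, both $\pi_0\Map(R_T(T),R_T(T)) \cong \Hl^0_{\et}(T,R)$ and $\pi_0\Map(\un_T,\un_T) \cong \Hl^{0,0}_{\mc M,\et}(T,R)$ are free $R$-modules of rank one generated by the respective identity morphisms, the map $\pi_0(\psi_T)$ is $R$-linear and sends $\mathrm{id}$ to $\mathrm{id}$, hence is an isomorphism, and all higher homotopy groups vanish on both sides. No naturality gymnastics are needed. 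You should add this step; without it the proposal does not actually conclude.

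Second, a smaller point: to upgrade full faithfulness on the generators $R_S(V)$, $V$ finite \'etale, to full faithfulness on all of $\Shv_{\In\lis}(S,R)$, you need $\Map(-,-)$ to commute with the colimits generating the category on both sides. The paper handles this by noting that once one has reduced to the case where $R$ is a $\Q$-algebra, the objects $R_S(W)$ and $M_S(W)$ are compact (citing \cite[1.1.4, 1.1.9]{em}); you should say this explicitly, since in general ($R=\Z$ for instance) these objects are not compact in the \'etale setting and the reduction argument genuinely needs to come first. Aside from these two points the structure, the use of \Cref{442em}, \Cref{conservativity}, rigidity, Deninger's computation, and the identification of weight-zero motivic cohomology with Zariski cohomology of the constant sheaf (this is exactly \cite[B.5]{plh}) all agree with the paper.
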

\begin{proof} We can assume the scheme $S$ to be connected. If $V$ is an étale cover of $S$, the functor $\rho_!$ sends the sheaf $R_S(V)$ to the motive $M_S(V)$. Using \Cref{smooth generators}, the fact that the ring $R$ is regular and the fact that $\rho_!$ commutes with colimits and finite limits, it induces exact functors 
$$\Shv_{\In\lis}(S,R)\longrightarrow \DM^{smA}_{\et}(S,R)$$
$$\Shv_{\lis}(S,R)\longrightarrow \DM^{smA}_{\et,c}(S,R).$$

Assuming that the second functor is an equivalence, the first functor is also an equivalence since $\Shv_{\lis}(S,R)$ is the thick stable subcategory of $\Shv_{\In\lis}(S,R)$ generated by the sheaves of the form $R_S(V)$ when $V$ runs through the set of étale covers of $S$ and $\DM^{smA}_{\et,c}(S,R)$ is the thick stable subcategory of $\DM^{smA}_{\et}(S,R)$ generated by the motives of the form $M_S(V)$ when $V$ runs through the set of étale covers of $S$.

We now prove that the functor $$\Shv_{\In\lis}(S,R)\longrightarrow \DM^{smA}_{\et}(S,R)$$ induced by $\rho_!$ is an equivalence. Assuming that the functor $\rho_!$ is fully faithful, its essential surjectivity follows from the fact that $\mc{DM}^{smA}_{\et}(S,R)$ is generated as a localizing subcategory of itsef by the essential image of the functor $\rho_!$. Hence, it suffices to prove that the functor above is fully faithful.

Let $\rho^!$ be the right adjoint of $\rho_!$. The full faithfulness of $\rho_!$ holds for any good ring. It is equivalent to the fact that the ordinary transformation $\alpha\colon Id\rar \rho^!\rho_!$ is an equivalence. Using \Cref{conservativity,commutation}, we may assume that the ring $R$ is either of $\ell$-torsion where $\ell$ is invertible in $S$, or a $\Q$-algebra. The case where $R$ is of $\ell$-torsion follows from \cite[3.2]{bachmanrigidity}. Therefore, we can assume that the ring $R$ is a $\Q$-algebra.
 
The family of functors $\Map(R_S(W),-)$ when $W$ runs through the set of étale $S$-schemes is conservative. Therefore, it suffices to show that for any Ind-lisse étale sheaf $M$ and any étale $S$-scheme $W$, the canonical map $$\Map(R_S(W),M)\rar \Map(R_S(W),\rho^!\rho_!M)$$ is an equivalence. This is equivalent to the fact that the canonical map $$\Map(R_S(W),M)\rar \Map(M_S(W),\rho_!M)$$ is an equivalence.

Since $R$ is a $\Q$-algebra, by \cite[1.1.4, 1.1.9]{em} the sheaf $R_S(W)$ is a compact object of $\Shv(S_{\et},R)$ and the motive $M_S(W)$ is a compact object of $\DM_{\et}(S,R)$. As the stable category $\Shv_{\In\lis}(S,R)$ generated as a localizing subcategory of itself by objects of the form $R_S(V)$ where $V$ is finite étale over $S$, it therefore suffices to show that for any étale finite scheme $V$ over $S$ and any étale scheme $W$ over $S$, the map 
$$\rho_{W,V}\colon \Map(R_S(W),R_S(V))\rar \Map(M_S(W),M_S(V))$$
is an equivalence.

Recall now, that the étale sheaf $R_S(V)$ is dualizable with itself as a dual. As the functor $\rho_!$ is monoidal, this implies that the motive $M_S(V)$ is also dualizable with itself as a dual. Hence, the map $\rho_{W,V}$ is an equivalence if and only if the map $\rho_{W\times_S V, S}$ is an equivalence. As $$\rho_{X,S}=\bigoplus\limits_{T \in \pi_0(X)}\rho_{T,S},$$ it suffices to show that the maps $\rho_{T,S}$ are equivalences for $T$ étale over $S$ and connected. But if $T$ is étale over $S$, the map $\rho_{T,S}$ is an equivalence if and only if the canonical map $$\psi_T\colon \Map(R_T(T),R_T(T))\rar \Map(\un_T,\un_T)$$ is an equivalence.

Such a $T$ is irreducible and geometrically unibranch. Therefore, using \Cref{den}, we have 
\begin{align*}\pi_n(\Map(R_T(T),R_T(T)))&=\Hl^n_{\et}(T,R)\\
&=\begin{cases} R &\text{ if }n=0.\\
0 &\text{ otherwise}.
\end{cases}
\end{align*} 

Moreover, as such a $T$ is also regular (and connected), using \cite[B.5]{plh}, 
\begin{align*}\pi_n(\Map(\un_T,\un_T))&=\Hl^{n,0}_{\mc{M},\et}(T,R)\\
&=\begin{cases} R &\text{ if }n=0.\\
0 &\text{ otherwise}.
\end{cases}
\end{align*} 

Thus, it is sufficient to show that the map $\pi_0(\psi_T)$ is non-zero in order to show that it is an equivalence. But $\pi_0(\psi_T)$ is the natural map $$\Hom(R_T(T),R_T(T))\rar \Hom(\un_T,\un_T)$$ and this map sends the identity of $R_T(T)$ to the identity of $\un_T$.
\end{proof}
\begin{remark}\label{B7} Let $R$ be a regular good ring and let $S$ be a geometrically unibranch scheme which is the disjoint union of its irreducible components. Assume that the residue characteristic exponents of $S$ are invertible in $R$.

Then, the functor $\theta_!=L_{\AAA}\rho_{\#}$ (see \Cref{small et site}) induces monoidal equivalences 
$$\Shv_{\In\lis}(S,R)\longrightarrow \DA^{\eff,smA}_{\et}(S,R)$$
$$\Shv_{\lis}(S,R)\longrightarrow \DA^{\eff,smA}_{\et,c}(S,R)$$ 

Indeed, for the same reasons as in the proof of \Cref{smooth Artin h-motives}, we reduce to showing that the natural maps $$\Map(R_S(T),R_S(S))\rar \Map(L_{\AAA}R_S(T),L_{\AAA}R_S(S))$$ are equivalences for $T$ connected and étale over $S$ and $R$ a $\Q$-algebra (beware that on the right hand side, $R_S(T)$ and $R_S(S)$ are seen as sheaves on $\Sm_S$). This follows from \Cref{A1-locality} below.

Hence, if $S$ is regular, the functor $$\Sigma^\infty\colon\DA^{\eff,smA}_{\et}(S,R)\rar \DM^{smA}_{\et}(S,R)$$ is an equivalence. This is not true in general if $S$ is normal but non-regular since \cite[B.7]{plh} provides an example where étale cohomology in degree $n$ differs from motivic cohomology in degree $(n,0)$ for some integer $n$. In particular \Cref{smooth Artin h-motives} is false when the scheme $S$ is not regular.
\end{remark}

\begin{lemma}\label{A1-locality} Let $S$ be a scheme, let $X$ be a geometrically unibranch smooth $S$-scheme which is the disjoint union of its irreducible components and let $R$ be a regular good ring, where all residue characteristics of $X$ are invertible. Then, the étale sheaf $R_S(X)$ on $\Sm_S$ is $\AAA$-local.
\end{lemma}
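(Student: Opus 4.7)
The plan is to reduce the $\AAA$-locality of $R_S(X)$ to the $\AAA^1$-invariance of étale cohomology with coefficients in $R$ on smooth $X$-schemes, exploiting the $\Sm$-premotivic adjunction $f_\#\dashv f^*$ for the structure morphism $f\colon X\to S$. First I will identify $R_S(X) = f_\# R_X$ in $\Shv_{\et}(S,R)$, where $R_X$ denotes the representable hypersheaf of $X\in\Sm_X$, whose value at a smooth $X$-scheme $U$ is $R\Gamma_{\et}(U,R)$. Combining the base change identity $f^* R_S(Y) = R_X(Y\times_S X)$ with Yoneda yields the explicit formula $(f_\# F)(Y) = F(Y\times_S X)$ for any hypersheaf $F$ on $\Sm_X$ and any smooth $S$-scheme $Y$. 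Since $\AAA_Y\times_S X \simeq \AAA_{Y\times_S X}$, this formula shows immediately that $f_\#$ preserves $\AAA$-local sheaves, so it suffices to prove that $R_X$ itself is $\AAA$-local on $\Sm_X$.

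Unwinding the definition, this amounts to showing that for every smooth $X$-scheme $U$ the pullback map $R\Gamma_{\et}(U,R) \to R\Gamma_{\et}(\AAA_U,R)$ is a quasi-isomorphism. The key geometric input is that the property of being geometrically unibranch is preserved under smooth morphisms: this can be checked on strict henselizations, which are flat over those of the base, while the geometric fibers of a smooth morphism are regular. Thus both $U$ and $\AAA_U$ are geometrically unibranch, and their residue characteristic exponents coincide with those of $X$, hence are invertible in $R$.

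It then remains to establish this $\AAA^1$-invariance statement. If $R$ is a localization of the ring of integers of a number field, then $R$ is flat over $\Z$, and \Cref{h-cohomologie motivique sur un corps} expresses $H^n_{\et}(U,R)$ and $H^n_{\et}(\AAA_U,R)$ for $n\geq 2$ in terms of $\ell$-adic Betti numbers $b_{n-1}(-,\ell)$ and torsion subgroups $T_{n-1}(-,\ell)$ for primes $\ell$ invertible on $X$; the classical $\AAA^1$-invariance of étale cohomology with $\Z/\ell^n\Z$-coefficients from \cite{sga4} yields the equality of these invariants for $U$ and $\AAA_U$, while the low-degree cases are immediate since $\pi_0(\AAA_U)=\pi_0(U)$. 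If instead $R$ is noetherian of positive characteristic $p$, the hypothesis forces $p$ to be invertible on $X$, and flat base change along $\F_p\to R$ reduces the claim to the same $\AAA^1$-invariance with $\F_p$-coefficients. The main subtlety I anticipate is the clean identification $R_S(X) = f_\# R_X$ and the formal verification that $f_\#$ preserves $\AAA$-locality in the hypersheaf setting; once these $\Sm$-premotivic compatibilities are in place, the lemma follows from standard computations of étale cohomology of geometrically unibranch schemes.
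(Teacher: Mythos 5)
The core reduction in your argument rests on the claim that $(f_\# F)(Y) = F(Y\times_S X)$, derived by combining $f^*R_S(Y)=R_X(Y\times_S X)$ with a Yoneda argument, but this is the formula for $f_*$, not $f_\#$. The base-change functor $q\colon \Sm_S\to\Sm_X$, $Y\mapsto Y\times_S X$, induces by precomposition a functor whose value at $Y$ is $F(Y\times_S X)$, and this precomposition is the \emph{right} adjoint $f_*$ of $f^*$; by contrast $f_\#$ is the \emph{left} adjoint of $f^*$, obtained as a left Kan extension along the forgetful functor $p\colon\Sm_X\to\Sm_S$. The Yoneda computation you invoke only gives
\[
(f_* F)(Y)=\Map(R_S(Y),f_*F)=\Map(f^*R_S(Y),F)=\Map(R_X(Y\times_S X),F)=F(Y\times_S X),
\]
and this chain uses $f^*\dashv f_*$; there is no adjunction of the form $\Map(G,f_\#F)\simeq\Map(f^*G,F)$ because $f_\#$ sits on the other side, $f_\#\dashv f^*$. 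Concretely, the comma category indexing the left Kan extension $(p_!F)(Y)=\colim_{(U,\,Y\to p(U))}F(U)$ has no terminal object (for $f\colon\AAA^1_S\to S$ and $Y=S$ it splits into as many connected pieces as there are $S$-points of $X$), so the colimit does not collapse to a single evaluation.

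This matters because the consequence you draw, namely that $f_\#$ preserves $\AAA$-local objects, is the genuinely delicate point. It is $f_*$ (right adjoint of the $\AAA^1$-equivalence-preserving functor $f^*$) and $f^*$ (right adjoint of the $\AAA^1$-equivalence-preserving functor $f_\#$) that automatically preserve $\AAA$-local objects; $f_\#$ is a left adjoint and inherits no such property. In fact, for $f\colon\AAA^1_k\to\Spec k$ with $k$ separably closed of characteristic zero and $R=\Q$, one has $f_\#\un_{\AAA^1_k}=R_k(\AAA^1_k)$, and $R_k(\AAA^1_k)(\Spec k)$ equals the sheafification of $Y\mapsto\Q[\mathcal O(Y)]$ evaluated at $\Spec k$, which is $\Q[k]$ rather than $\Q=\iota L_\AAA R_k(\AAA^1_k)(\Spec k)$; so $f_\#$ does not send the $\AAA$-local object $\un_{\AAA^1_k}$ to an $\AAA$-local object. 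Your reduction to $\AAA^1$-invariance of étale cohomology on $\Sm_X$ therefore does not go through as written, and the argument has a genuine gap at this step. (The remaining part of your proof --- that étale cohomology of smooth $X$-schemes with $R$-coefficients is $\AAA^1$-invariant, using that smooth morphisms preserve the geometrically unibranch property, Deninger's computation, and SGA4 XV.2.2 with torsion coefficients --- is the correct input, and is essentially what the paper's terse proof via conservativity also relies on; the issue is purely in how you propagate that input from $\Sm_X$ back to $R_S(X)$ on $\Sm_S$.)
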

\begin{proof} The functor $L_{\AAA}$ commutes with the functors $-\otimes_{\Z}\Q$ and $-\otimes_\Z \Z/p\Z$ by \Cref{commutation}. Using \cite[XV.2.2]{sga4} and \Cref{den}, \Cref{conservativity} applied to the canonical map $R_S(X)\rar L_{\AAA} R_S(X)$ yields the result.
\end{proof}

We now state some consequences of \Cref{smooth Artin h-motives}. 
\begin{corollary} Let $R$ be a regular good ring and let $S$ be a connected regular scheme. Assume that any residue characteristic exponent of $S$ is invertible in $R$. Then, the canonical map
$$\Hl^n_{\et}(S,R)\rar \Hl^{n,0}_{\mc{M},\et}(S,R)$$ is an isomorphism.
\end{corollary}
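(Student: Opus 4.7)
The plan is to derive this directly from the monoidal equivalence $\rho_!\colon \Shv_{\In\lis}(S,R)\rar \DM^{smA}_{\et}(S,R)$ established in \Cref{smooth Artin h-motives}. The key observation is that both cohomology groups in question are mapping spaces from (resp.\ to shifts of) the unit object, and the equivalence $\rho_!$ sends the unit to the unit.

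More concretely, I would first note that the constant sheaf $\underline{R}=R_S(S)$ is dualizable in $\Shv(S_{\et},R)$ (with itself as dual) and therefore lies in $\Shv_{\lis}(S,R)\subseteq \Shv_{\In\lis}(S,R)$, while the monoidal equivalence $\rho_!$ sends $\underline{R}$ to the unit motive $\un_S$. By definition
$$\Hl^n_{\et}(S,R)=\pi_0\Map_{\Shv(S_{\et},R)}(\underline{R},\underline{R}[n]),\qquad \Hl^{n,0}_{\mc{M},\et}(S,R)=\pi_0\Map_{\DM_{\et}(S,R)}(\un_S,\un_S[n]),$$
and the canonical map between them is precisely the map induced by $\rho_!$ on mapping spectra.

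Next, I would invoke the fact that $\Shv_{\In\lis}(S,R)$ is by definition a full subcategory of $\Shv(S_{\et},R)$, and $\DM^{smA}_{\et}(S,R)$ is by definition a full subcategory of $\DM_{\et}(S,R)$. Consequently the mapping spaces computed inside these subcategories agree with those computed in the ambient stable categories. The full faithfulness statement of \Cref{smooth Artin h-motives} then yields an equivalence
$$\Map_{\Shv(S_{\et},R)}(\underline{R},\underline{R}[n])\xrightarrow{\rho_!}\Map_{\DM_{\et}(S,R)}(\un_S,\un_S[n]),$$
and taking $\pi_0$ gives the desired isomorphism.

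There is essentially no obstacle here beyond verifying that the map produced by $\rho_!$ coincides with the \emph{canonical} comparison map from étale cohomology to étale motivic cohomology; this is a formal check using the construction $\rho_!=\Sigma^\infty L_{\AAA}\rho_\#$ of \Cref{small et site} together with the fact that $\rho_!$ is monoidal and so preserves the unit and unit maps. The only hypothesis used from the statement is that the residue characteristic exponents of $S$ are invertible in $R$, which is exactly what is needed to apply \Cref{smooth Artin h-motives}; connectedness of $S$ plays no role beyond that already used in the proof of that theorem.
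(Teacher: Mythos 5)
Your proof is correct and is precisely the intended deduction: the corollary is stated in the paper as an immediate consequence of the equivalence in the preceding theorem, and your argument—that both cohomology groups are $\pi_0$ of endomorphism spectra of the unit in full subcategories, identified via the monoidal fully faithful $\rho_!$—is the right way to spell this out. Your closing observations (that the comparison map is indeed the one induced by $\rho_!$, and that connectedness is not actually needed) are both accurate.
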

Combining \Cref{smooth Artin h-motives} with \Cref{loc cst,description des faisceaux lisses} yields the following statement.
\begin{corollary}\label{t-structure lisse} Let $R$ be a regular good ring and let $S$ be a regular scheme. Assume that the residue characteristic exponents of $S$ are invertible in $R$.
\begin{enumerate}
    \item The category $\DM^{smA}_{\et}(S,R)$ is endowed with a non-degenerate t-structure such that the functor $$\Shv_{\In\lis}(S,R)\longrightarrow \DM^{smA}_{\et}(S,R)$$ induced by the functor $\rho_!$ of \Cref{small et site 2} is t-exact when the left hand side is endowed with the ordinary t-structure. We still call this t-structure the \emph{ordinary t-structure}. Its heart is equivalent to the abelian category $\In \Loc(S,R)$. 
    \item The category $\DM^{smA}_{\et,c}(S,R)$ is endowed with a non-degenerate t-structure such that the functor $$\Shv_{\lis}(S,R)\longrightarrow \DM^{smA}_{\et,c}(S,R)$$ induced by the functor $\rho_!$ of \Cref{small et site 2} is t-exact when the left hand side is endowed with the ordinary t-structure. We still call this t-structure the \emph{ordinary t-structure}. Its heart is equivalent to the abelian category $\Loc_S(R)$. If $S$ is connected and $\xi$ is a geometric point of $S$, it is also equivalent to the abelian category $\Rep^A(\pi_1^{\et}(S,\xi),R)$.
\end{enumerate}
\end{corollary}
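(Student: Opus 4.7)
The corollary follows almost directly by combining the equivalences provided by \Cref{smooth Artin h-motives} with the structural results on lisse and Ind-lisse sheaves from \Cref{loc cst,description des faisceaux lisses}. My plan is therefore to transport the ordinary t-structures along the equivalences $\rho_!$ and then read off the hearts.

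First, \Cref{smooth Artin h-motives} provides monoidal equivalences of stable $\infty$-categories
\[
\rho_!\colon\Shv_{\In\lis}(S,R)\xrightarrow{\sim} \DM^{smA}_{\et}(S,R),\qquad \rho_!\colon\Shv_{\lis}(S,R)\xrightarrow{\sim} \DM^{smA}_{\et,c}(S,R).
\]
\Cref{loc cst} ensures that the ordinary t-structure on $\Shv(S_{\et},R)$ induces t-structures on both $\Shv_{\lis}(S,R)$ and $\Shv_{\In\lis}(S,R)$, with respective hearts $\Loc_S(R)$ and $\In\Loc(S,R)$. Transporting these t-structures along the equivalences above defines the required t-structures on $\DM^{smA}_{\et}(S,R)$ and $\DM^{smA}_{\et,c}(S,R)$, and by construction the functors $\rho_!$ are t-exact; the hearts are identified with $\Loc_S(R)$ and $\In\Loc(S,R)$, respectively.

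It remains to check non-degeneracy. For constructible smooth Artin motives, this is immediate: by \Cref{loc cst}(1), every lisse étale sheaf is bounded with respect to the ordinary t-structure on $\Shv(S_{\et},R)$, hence the induced t-structure on $\Shv_{\lis}(S,R)$ is bounded and a fortiori non-degenerate. For $\Shv_{\In\lis}(S,R)$, observe that the truncation functors on the induced t-structure are computed in the ambient category $\Shv(S_{\et},R)$, whose ordinary t-structure is non-degenerate; since the intersections $\bigcap_n \Shv^{\leqslant -n}(S_{\et},R)$ and $\bigcap_n \Shv^{\geqslant n}(S_{\et},R)$ are both zero, the same holds for their intersections with $\Shv_{\In\lis}(S,R)$, giving non-degeneracy of the induced t-structure.

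Finally, the last identification follows directly from \Cref{description des faisceaux lisses}: if $S$ is connected with geometric point $\xi$, the fiber functor $\xi^*$ realizes an equivalence of abelian monoidal categories $\Loc_S(R)\simeq \Rep^A(\pi_1^{\et}(S,\xi),R)$, which composed with the heart equivalence above yields the stated description of the heart of $\DM^{smA}_{\et,c}(S,R)$. There is no real obstacle here; all the work has been done in \Cref{smooth Artin h-motives} and \Cref{loc cst}, and the corollary is essentially a formal consequence.
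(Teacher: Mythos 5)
Your proof is correct and follows exactly the route the paper intends: the text introducing the corollary explicitly says it is obtained by combining \Cref{smooth Artin h-motives} with \Cref{loc cst,description des faisceaux lisses}, and your argument (transport the ordinary t-structures along the equivalences $\rho_!$, read off the hearts, check non-degeneracy via the ambient t-structure on $\Shv(S_{\et},R)$, and apply \Cref{description des faisceaux lisses} for the final identification) is precisely this. The non-degeneracy check for the Ind-lisse case is the only step the paper leaves entirely implicit, and your observation that the truncation functors of the induced t-structure are computed in the ambient non-degenerate t-category handles it correctly.
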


\begin{corollary}\label{smooth Artin h-motives in equal characteristic} Let $R$ be a regular good ring and let $S$ be a connected regular scheme of characteristic exponent $p$. Then, the functor $\rho_!$ of \Cref{small et site 2} induces monoidal equivalences
$$\Shv_{\In\lis}(S,R[1/p])\longrightarrow \DM^{smA}_{\et}(S,R)$$
$$\Shv_{\lis}(S,R[1/p])\longrightarrow \DM^{smA}_{\et,c}(S,R).$$
\end{corollary}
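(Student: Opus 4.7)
The strategy is to reduce to \Cref{smooth Artin h-motives} by enlarging the coefficient ring from $R$ to $R[1/p]$. If $p=1$, then $R[1/p]=R$ and the statement is exactly \Cref{smooth Artin h-motives}. Otherwise $p$ is a prime and $S$ has characteristic $p$; the ring $R[1/p]$, being a localization of a regular good ring, is again regular and good, and the unique residue characteristic $p$ of $S$ is invertible in $R[1/p]$. Applying \Cref{smooth Artin h-motives} with coefficients $R[1/p]$ therefore yields the desired monoidal equivalences, but with $\DM^{smA}_{\et}(S,R[1/p])$ (respectively $\DM^{smA}_{\et,c}(S,R[1/p])$) on the right-hand side in place of $\DM^{smA}_{\et}(S,R)$ (respectively $\DM^{smA}_{\et,c}(S,R)$).

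It then remains to show that the change-of-coefficients functor $\sigma^*\colon \DM^{smA}_{\et,(c)}(S,R) \to \DM^{smA}_{\et,(c)}(S,R[1/p])$ attached to $R\to R[1/p]$ is an equivalence. By \Cref{commutation}, the functor $\sigma^*$ sends the generator $M_S(X,R)$ to $M_S(X,R[1/p])$, so it is essentially surjective on generators and hence on the entire localizing (respectively thick) subcategory they generate. For full faithfulness, it suffices to establish that $\DM^{smA}_{\et}(S,R)$ is already $R[1/p]$-linear, i.e., that multiplication by $p$ is an equivalence on every smooth Artin motive. The subcategory on which $p$ acts invertibly is closed under colimits, extensions, and retracts, so the claim reduces to the generators $M_S(X)=f_!\un_X$ with $f\colon X\to S$ finite étale, and then to the vanishing of $\un_X \otimes_R R/pR$ in $\DM_{\et}(X,R/pR)$.

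The crux of the argument, and the step I expect to be the main obstacle, is therefore the vanishing $\DM_{\et}(X,R/pR)=0$. When $p$ is invertible in $R$, $R/pR=0$ and there is nothing to prove; otherwise $R/pR$ is a good ring (noetherian of positive characteristic) and an $\mathbb{F}_p$-algebra. In particular $R/pR \otimes_\Z \Z[1/p]=0$, so every object $M$ of $\DM_{\et}(X,R/pR)$ satisfies $M\otimes_\Z \Z[1/p]=0$, i.e., $M$ is $p^\infty$-torsion. Since $X$ is finite étale over a scheme of characteristic $p$, it has characteristic $p$ itself, so $X[1/p]=\emptyset$. By \Cref{rigidity V1}(2), the restriction functor identifies $\DM_{\et,p^\infty-\mathrm{tors}}(X,R/pR)$ with $\DM_{\et,p^\infty-\mathrm{tors}}(X[1/p],R/pR)=0$, which gives the vanishing and completes the argument.
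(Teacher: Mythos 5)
Your proof is correct, and it follows what the paper surely intends (the corollary is stated without an explicit proof). The key step you isolate — that $\DM_{\et}(X,R/pR)$ vanishes when $X$ has characteristic $p$, so multiplication by $p$ is invertible on the generators $M_S(X)=f_!\un_X$ and hence on all of $\DM^{smA}_{\et,(c)}(S,R)$ — is exactly the reduction needed to apply \Cref{smooth Artin h-motives} with coefficients $R[1/p]$. The paper would likely cite \cite[A.3.4]{em} directly for this vanishing (as is done in the proofs of \Cref{rigidity V1}, \Cref{continuity0}, and \Cref{ordinary is trop bien}), whereas you recover it from \Cref{rigidity V1}(2) applied to $R/pR$ over $X$ with $X[1/p]=\varnothing$; this is legitimate since $R/pR$ is noetherian of characteristic $p$, hence a good ring, so that proposition applies. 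Your chain of reductions (essential surjectivity on generators, full faithfulness from $R[1/p]$-linearity, $R[1/p]$-linearity from the vanishing) is sound.
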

\begin{corollary}\label{Artin etale motives over a field} Let $k$ be a field of characteristic exponent $p$.  Let $R$ be a regular good ring.  Then, the functor $\rho_!$ induces monoidal equivalences 

$$\mc{D}(\mathrm{Mod}(G_k,R[1/p]))\simeq \Shv(k_{\et},R[1/p])\longrightarrow \DM_{\et}^{A}(k,R)$$
$$\mc{D}^b(\Rep^A(G_k,R[1/p]))\simeq \Shv_{\lis}(k,R[1/p]) \longrightarrow \DM_{\et,c}^{A}(k,R).$$
\end{corollary}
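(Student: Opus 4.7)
The plan is to reduce the statement to the previous corollary (smooth Artin étale motives equal lisse sheaves over a connected regular base of characteristic exponent $p$) together with the identifications supplied by Proposition "lisse fields".

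First I would show that over the field $k$ we have equalities
\[
\DM^{smA}_{\et}(k,R)=\DM^A_{\et}(k,R)\qquad\text{and}\qquad \DM^{smA}_{\et,c}(k,R)=\DM^A_{\et,c}(k,R).
\]
Since Artin motives are generated by $h_k(X)=f_*\un_X$ for $f\colon X\rar \Spec(k)$ finite, and any such $X$ is a finite disjoint union of spectra of finite extensions $L/k$ (up to reduction, which is invisible in $\DM_{\et}$ by topological invariance of the small étale site), it suffices to treat $X=\Spec(L)$. Factoring $f$ as $\Spec(L)\xrightarrow{g}\Spec(L_s)\xrightarrow{h}\Spec(k)$ with $g$ purely inseparable and $h$ finite étale, \cite[6.3.16]{em} gives $g_*\un_L=\un_{L_s}$, hence $h_k(L)=h_*\un_{L_s}=M_k(\Spec(L_s))$, which is a generator of $\DM^{smA}_{\et,c}(k,R)$. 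The same argument yields the analogous equality for the (non constructible) localizing subcategories.

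Next I would apply \Cref{smooth Artin h-motives in equal characteristic} to $S=\Spec(k)$, which is connected and regular of characteristic exponent $p$: this gives monoidal equivalences
\[
\rho_!\colon\Shv_{\In\lis}(k,R[1/p])\longrightarrow \DM^{smA}_{\et}(k,R),\qquad \rho_!\colon\Shv_{\lis}(k,R[1/p])\longrightarrow \DM^{smA}_{\et,c}(k,R),
\]
and combining with the first step gives the corresponding equivalences with $\DM^A_{\et,(c)}(k,R)$ as target. The monoidality is inherited from the monoidality of $\rho_!$ established in \Cref{442em}.

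Finally, I would invoke \Cref{lisse fields}. Part (3) provides the equality $\Shv_{\In\lis}(k_{\et},R[1/p])=\Shv(k_{\et},R[1/p])$, and part (1) identifies this with $\mc{D}(\mathrm{Mod}(G_k,R[1/p]))$ via the fiber functor at a separable closure; this yields the first equivalence. For the second, part (2) identifies $\Shv_{\lis}(k,R[1/p])$ with $\mc{D}_{\mathrm{perf}}(\mathrm{Mod}(G_k,R[1/p]))$, and part (4) identifies the latter with $\mc{D}^b(\Rep^A(G_k,R[1/p]))$. The main (small) obstacle is really the verification in the first step that $h_k(X)$ is a smooth Artin motive for $X$ finite but not étale over $k$; this is where the hypothesis on the characteristic exponent enters, via the invertibility of purely inseparable pullbacks in étale motives (no need to invert $p$ on the motivic side thanks to \cite[6.3.16]{em}). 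Once this is settled, all other steps are direct applications of the earlier results.
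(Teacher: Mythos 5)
Your proposal is correct and follows essentially the paper's approach, which simply combines \Cref{smooth Artin h-motives in equal characteristic} with \Cref{lisse fields}; you helpfully make explicit the implicit step that over a field one has $\DM^{smA}_{\et,(c)}(k,R)=\DM^{A}_{\et,(c)}(k,R)$, via the reduction and the separable closure factorization. One small remark: the invisibility of the reduction $X_{\mathrm{red}}\rar X$ in $\DM_{\et}$ is again a consequence of \cite[6.3.16]{em} on universal homeomorphisms (the same result you invoke for the purely inseparable step), rather than of topological invariance of the small étale site, which is only the sheaf-level statement.
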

\begin{proof} Combine \Cref{smooth Artin h-motives in equal characteristic} with \Cref{lisse fields}.
\end{proof} 

\begin{remark}It is not true in general that the canonical functor $$\mc{D}^b(\Rep^A(\pi_1^{\et}(S,\xi),R))\rar \Shv_{\lis}(S,R)$$ is an equivalence. Indeed, letting $k$ be a separably closed field of characteristic $0$, the étale fundamental group of $\mb{P}^1_k$ is trivial. Therefore, the abelian category of its $\Z$-linear Artin representations is the category of finitely generated abelian groups, which for any geometric point $\xi$ yields $$\mathrm{Ext}^3_{\Rep^A\left(\pi_1^{\et}(\mb{P}^1_k,\xi),\Z\right)}(\Z,\Z)=0.$$ On the other hand, we have $$\Hl_{\et}^3(\mb{P}^1_k,\Z)=\mathrm{Ext}^3_{\Sh_{\et}(\mb{P}^1_k,\Z)}(\Z,\Z)$$ and this group is isomorphic to $\Q/\Z$ by \Cref{etale coh of curves}. Thus, the above functor cannot be an equivalence.

Finally, wen $R$ is a $\Q$-algebra, it was however shown in \cite[2.3.4]{ncd} that the above functor is an equivalence. 
\end{remark}

We end this section with a description of the $\ell$-adic realization functor on smooth Artin motives over a regular scheme.

\begin{proposition}\label{l-adic real 1} Let $S$ be a regular scheme and let $R$ be a localization of the ring of integers of a number field $K$, let $v$ be a non-archemedian valuation on the field $K$ and let $\ell$ be the prime number such that $v$ extends the $\ell$-adic valuation.  Assume that $\ell$ is invertible on $S$ and that all the residue characteristic exponents of $S$ are invertible in $R$. 
\begin{enumerate}\item If $v$ is non-negative on $R$, denote by $\lambda$ a uniformizer of $R_{(v)}$. Let $\Xi$ be the functor $$\Shv_{\In\lis}(S,R)\rar \Shv_{\lambda}(S,R_{(v)})$$ that sends an étale sheaf $M$ to the $\lambda$-complete sheaf $\lim (M\otimes_R R_{(v)})/\lambda^n$. 

Then, with the notations of \Cref{l-adic real}, the square
$$\begin{tikzcd}\Shv_{\In\lis}(S,R)\ar[d,"\Xi"] \ar[r,"\rho_!"] & \DM^{smA}_{\et}(S,R) \ar[d,"\rho_v"] \\
    \Shv_{\lambda}(S,R_{(v)}) \ar[r,"\Psi"]& \mc{D}(S,R_v)
\end{tikzcd}$$
is commutative.

Furthermore, when restricted to the subcategory of constructible smooth Artin étale motives, the $v$-adic realization functor is t-exact with respect to the ordinary t-structures 
%When $S$ is connected and $\xi$ is a geometric point of $S$, 
and induces the exact functor $$-\otimes_R R_v\colon   \Loc_S(R)\rar \Sh^{smA}(S,R_v)$$
between the hearts. %when they are identified with categories of representations through the equivalences of \Cref{t-structure lisse} and \Cref{Artin l-adic sheaves}.  

%Moreover, if $S=\Spec(k)$ with $k$ a field, then, the $\ell$-adic realization functor $$\mc{DM}^A_{\et,c}(k,R)\rar \mc{D}^b_c(k,\Z_\ell)$$ 
%identifies with the functor $$-\otimes_R \Z_\ell: \mc{D}^b(\Rep^A(G_k,R))\rar \mc{D}^b(\Rep(G_k,\Z_\ell)).$$
\item If $v$ is not non-negative on $R$, Let $A$ be the subring of $K$ on which the valuation $v$ is non-negative and let $\lambda$ be a uniformizer of $A$. Then, with the notations of \Cref{l-adic real}, the square
$$\begin{tikzcd}\Shv_{\lis}(S,R)\ar[d,"\Xi'"] \ar[r,"\rho_!"] & \DM^{smA}_{\et,c}(S,R) \ar[d,"\rho_v"] \\
    \Shv_{\lambda,c}(S,A)_{(0)} \ar[r,"\Psi'"]& \mc{D}^b_c(S,R_v)
\end{tikzcd}$$
is commutative using the notations of \Cref{p-loc}, if $\Psi'$ denotes the functor $\Psi_{(0)}$ and $\Xi'$ denotes the functor $\Xi_{(0)}\circ (-\otimes_R K)$.

Furthermore, the $v$-adic realization functor is t-exact with respect to the ordinary t-structures and induces the exact functor $$-\otimes_R R_v\colon   \Loc_S(R)\rar \Sh^{smA}(S,R_v)$$
between the hearts.  
\end{enumerate}
\end{proposition}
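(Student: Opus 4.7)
The plan is to exploit the monoidal equivalence $\rho_!\colon \Shv_{\In\lis}(S,R) \xrightarrow{\sim} \DM^{smA}_\et(S,R)$ provided by \Cref{smooth Artin h-motives} together with the explicit construction of the $v$-adic realization recalled in \Cref{l-adic real}. Two inputs will do most of the work: first, by the Rigidity Theorem (\Cref{rigidity V1}, based on \cite[5.5.4]{em}), the unit $\id \to \rho^!\rho_!$ is an equivalence on every $\ell^\infty$-torsion sheaf since $\ell$ is invertible on $S$; second, the functor $\rho_!$ commutes with all colimits, with tensor product and with change of coefficients (\Cref{442em,commutation}), while $\rho^!$ commutes with limits as a right adjoint. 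Once the commutativity of the square is established, t-exactness will follow from a heart computation combined with the boundedness of lisse sheaves established in \Cref{loc cst}.

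I first treat case (1). Given $M\in\Shv_{\In\lis}(S,R)$, the definition of $\rho_v$ from \Cref{l-adic real} unfolds as
\begin{align*}
\rho_v(\rho_!(M)) &\simeq \Psi\bigl(\rho^!(\rho_\lambda^{\mathrm{CD}}(\rho_!(M)\otimes_R R_{(v)}))\bigr)\\
&\simeq \Psi\bigl(\rho^!({\lim}_n\, \rho_!((M\otimes_R R_{(v)})/\lambda^n))\bigr)\\
&\simeq \Psi\bigl({\lim}_n\, \rho^!\rho_!((M\otimes_R R_{(v)})/\lambda^n)\bigr)\\
&\simeq \Psi\bigl({\lim}_n\, (M\otimes_R R_{(v)})/\lambda^n\bigr) = \Psi(\Xi(M)),
\end{align*}
where the second equivalence commutes $\rho_!$ with change of coefficients and with the cofiber defining $(-)/\lambda^n$ via \Cref{commutation}, the third uses that $\rho^!$ preserves limits, and the fourth applies the Rigidity Theorem to the $\lambda^n$-torsion sheaves $(M\otimes_R R_{(v)})/\lambda^n$.

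For t-exactness in case (1) I restrict to the constructible subcategory $\DM^{smA}_{\et,c}(S,R)\simeq\Shv_{\lis}(S,R)$. By \Cref{loc cst} every lisse sheaf is bounded for the ordinary t-structure, so by dévissage it suffices to check that $\Psi\circ\Xi$ sends $\Loc_S(R)$ into $\Sh^{smA}(S,R_v)$ via an exact functor. For $L\in \Loc_S(R)$, the fiber of $L\otimes_R R_{(v)}$ is a finitely generated module over the DVR $R_{(v)}$; a direct computation of the pro-system $\lim_n (L\otimes_R R_{(v)})\otimes^{\mb{L}}_{R_{(v)}} R_{(v)}/\lambda^n$ shows that its higher Tor's die in the limit, so the complex reduces to the $\lambda$-adic completion $L\otimes_R R_v$ placed in degree zero. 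Thus $\Psi\circ\Xi$ sends hearts to hearts, inducing the functor $L\mapsto L\otimes_R R_v$; exactness on the heart follows from the flatness of $R_v$ over $R$ (compose the flatness of $R_{(v)}/R$ with that of the completion $R_v/R_{(v)}$). The target lands in $\Sh^{smA}(S,R_v)$ because \Cref{description des faisceaux lisses} attaches to $L$ an Artin representation of $\pi_1^{\et}(S,\xi)$ factoring through a finite quotient, whose scalar extension to $R_v$ also factors through a finite quotient and therefore corresponds to a smooth Artin $\ell$-adic sheaf in the sense of \Cref{Artin l-adic sheaves}.

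Case (2) then reduces to case (1) applied to the DVR $A$. Indeed, by the construction recalled in \Cref{l-adic real}, the $v$-adic realization in this setting is the composition of the change of coefficients $\DM^{smA}_{\et,c}(S,R)\to\DM^{smA}_{\et,c}(S,K)$ with $\rho_\lambda\otimes_A K$, while \Cref{p-loc cons 1} and \Cref{p-loc cons 2} yield $\DM^{smA}_{\et,c}(S,A)_{(0)}\simeq \DM^{smA}_{\et,c}(S,K)$ and $\Shv_{\lis}(S,A)_{(0)}\simeq \Shv_{\lis}(S,K)$. I would therefore $0$-localize the commutative square for $A$ (on which $v$ is non-negative) and paste it with the square expressing the compatibility of $\rho_!$ with $-\otimes_R K$; t-exactness follows by composing the t-exactness from case (1) for $A$ with that of $-\otimes_R K$ on $\Loc_S(R)$, which holds because localization is flat. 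The main obstacle I foresee is purely bookkeeping: carefully identifying Cisinski--Déglise's $\rho_\lambda^{\mathrm{CD}}$ with the limit--completion construction defining $\Xi$ under the equivalence $\rho_!$, and checking that each intermediate step remains within the subcategories of lisse and Ind-lisse sheaves. The key lever throughout is the Rigidity Theorem, which permits the replacement of $\rho^!\rho_!$ by the identity at each torsion layer of the completion tower.
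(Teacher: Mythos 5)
Your proof is correct and follows essentially the same route as the paper: unfold $\rho_v = \Psi\circ\rho^!\circ\rho_\lambda^{\mathrm{CD}}$, use that $\rho^!$ preserves limits together with the (rigidity-derived) identity $\rho^!\rho_! = \id$ on the $\lambda$-torsion layers to obtain $\rho^!\circ\rho_\lambda^{\mathrm{CD}}\circ\rho_! = \Xi$, and then verify t-exactness by the same $\mathrm{Tor}$-computation showing that $\lim_n M[\lambda^n]$ vanishes since the transition maps are multiplication by $\lambda$. The only cosmetic difference is that you invoke the Rigidity Theorem explicitly at each torsion layer, whereas the paper leaves that full-faithfulness step implicit, relying on \Cref{smooth Artin h-motives}.
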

\begin{proof} The second assertion follows from the first assertion. We now prove the first assertion. 
Recall that with the notations of \Cref{l-adic real}, we have $$\rho_v=\Psi\circ \rho^! \circ \rho_\lambda^{\mathrm{CD}}.$$

But as $\rho^!$ is a right adjoint functor, it commutes with limits which yields $$\rho^! \circ \rho_\lambda^{\mathrm{CD}}\circ \rho_!=\Xi.$$ 
%Therefore, it suffices to prove that $\Xi$ is equivalent the functor $-\otimes_R R_v$. More precisely, we prove that the canonical map $$M\otimes_R R_v\rar \lim M\otimes_R R/\lambda^n R$$ is an equivalence. Therefore, it suffices to show that this map is an equivalence when $M$ runs through a set of generators of $\Shv_{\In\lis}(S,R)$. %Therefore, it suffices to show that for any $X$ which is finite and étale over $S$, the map $$R_S(X)\otimes_R R_v\rar \lim [R/\lambda^n R]_S(X)$$
%is an equivalence.

To show that the $\ell$-adic realization functor is t-exact when restricted to the subcategory of constructible smooth Artin étale motives, it suffices to show that the functor $$\Xi \circ \Psi\colon \Shv_{\lis}(S,R)\rar \mc{D}^b_c(S,R_v)$$ is t-exact. Let $M$ be a locally constant étale sheaf with coefficients in $R$. We have $$\Xi(\Psi(M))=\lim \nu^*(M)/\lambda^n.$$

It suffices to show that $\lim \nu^*(M)/\lambda^n$ lies in the heart of the ordinary t-structure on $\mc{D}^b_c(S,R_v)$. Now, by definition of the ordinary t-structure (see \cite[2.2.13]{bbd}) if $i$ is an integer, we have $$H^i(\lim \nu^*(M)/\lambda^n)=\lim H^i(\nu^*(M/\lambda^n))=\lim \nu^*\mathrm{Tor}_i^R(M,R/\lambda^n R).$$

Now, the exact sequence $$0\rar R\overset{\times \lambda^n}{\rar}R\rar R/\lambda^n R\rar 0$$ yields

$$\mathrm{Tor}_i^R(M,R/\lambda^n R)=\begin{cases} M/\lambda^n M &\text{if } i=0 \\
M[\lambda^n] &\text{if }i=-1 \\
0 &\text{otherwise}
\end{cases}$$

where $M[\lambda^n]$ denotes the $\lambda^n$-torsion of $M$. As the transition map $M[\lambda^{n+1}]\rar M[\lambda^n]$ is the multiplication by $\lambda$, we get $\lim M[\lambda^n]=0$. The result follows.
\end{proof}

\subsection{The Conservativity Conjecture for Artin Motives}

The conservativity conjecture, proved for Artin motives with rational coefficients in \cite{plh}, asserts that the $\ell$-adic realization functor \cite[7.2.11]{em} is conservative. With integral coefficients, we need to include realization functors with respect to every $\ell$. Indeed, if $M$ is an Artin motive of $p$-torsion and $\ell\neq p$, the $\ell$-adic realization of $M$ vanishes.

Furthermore, the stable category of constructible $\ell$-adic complexes has a non-pathological behavior only when $\ell$ is invertible on the base scheme. Therefore, we introduce the following functor.
\begin{definition}\label{reduced l-adic real} Let $S$ be a scheme, let $R$ be a localization of the ring of integers of a number field $K$, let $v$ be a valuation on $K$ and let $\ell$ be the prime number such that $v$ extends the $\ell$-adic valuation. 

The \emph{reduced $v$-adic realization functor} $\bar{\rho}_v$ is the composition 
$$\mc{DM}_{\et}(S,R)\overset{j_\ell^*}{\rar} \mc{DM}_{\et}(S[1/\ell],R)\overset{\rho_v}{\rar} \mc{D}(S[1/\ell],R_v),$$ where $j_\ell$ denotes the open immersion $S[1/\ell]\rar S$ and where the functor $\rho_v$ is the $v$-adic realization functor defined in \Cref{l-adic real}.

We still denote by $\bar{\rho}_v$ the restriction of this functor to the stable subcategory of constructible Artin motives.
\end{definition}

 A consequence of \Cref{smooth Artin h-motives} is the conservativity conjecture in the case of Artin motives.

\begin{proposition}\label{conservativityconj} Let $S$ be a scheme, let $R$ be a localization of the ring of integers of a number field $K$. If $v$ is a valuation on $K$, denote by $R_v$ the completion of $R$ with respect to $v$ and by $K_v$ the completion of $K$ with respect to $v$.
\begin{enumerate}\item If there is a prime number $\ell$ which is invertible on $S$, let $v$ be a valuation which extends the $\ell$-adic valuation then, the functor $$\bar{\rho}_v\colon\mc{DM}_{\et,c}^A(S,K)\rar \mc{D}(S,K_v)$$ is conservative. 
\item If $\ell$ and $\ell'$ are two distinct prime numbers, let $v$ be a valuation which extends the $\ell$-adic valuation and let $v'$ be a valuation which extends the $\ell'$-adic valuation. Then, when working with coefficients in $K$, the family $(\bar{\rho}_v,\bar{\rho}_{v'})$ is conservative.
\item The family $$\left(\bar{\rho}_v\colon\mc{DM}^A_{\et,c}(S,R)\rar \mc{D}^b_c(S[1/\ell(v)], R_v)\right)$$ where $v$ runs through the set of non-archimedian valuations on $K$ which are non-negative on $R$ and $\ell(v)$ denotes the prime number such that $v$ extends the $\ell$-adic valuation is conservative.
\end{enumerate}
\end{proposition}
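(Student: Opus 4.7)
For Part (1), since $\ell$ is invertible on $S$ we have $\bar{\rho}_v=\rho_v$. My plan is Noetherian induction on $S$. Given $M\in\DM^A_{\et,c}(S,K)$ with $\rho_v(M)=0$, write $M$ (via \Cref{generators}) as a thick combination of finitely many motives $h_S(X_i)$ with $X_i$ finite over $S$; by generic étaleness and Noetherianness of $S$, there exists a dense open $U\subseteq S$ which is regular and over which each $X_i|_U\to U$ is finite étale. Then $M|_U$ lies in $\DM^{smA}_{\et,c}(U,K)$, and \Cref{smooth Artin h-motives} together with \Cref{l-adic real 1} identify $\rho_v|_U$ with the extension of scalars $-\otimes_K K_v$ on $\Shv_{\lis}(U,K)$, which is faithful since $K\hookrightarrow K_v$ is a field extension. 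Hence $M|_U=0$, and the localization triangle of \Cref{f_!} reduces the problem to the closed complement $S\setminus U$, where the inductive hypothesis applies.

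For Part (2), the plan is to leverage Part (1) via the factorization $\bar{\rho}_v=\rho_v\circ j_\ell^*$, with $j_\ell\colon S[1/\ell]\hookrightarrow S$ the open immersion. Vanishing of $\bar{\rho}_v(M)$ and $\bar{\rho}_{v'}(M)$ yields $\rho_v(j_\ell^*M)=0$ and $\rho_{v'}(j_{\ell'}^*M)=0$, so by Part (1) applied over $S[1/\ell]$ and $S[1/\ell']$ we obtain $j_\ell^*M=0=j_{\ell'}^*M$. Since $\ell\neq\ell'$ are distinct primes, no residue field of $S$ has characteristic in both $\{\ell\}$ and $\{\ell'\}$, so $S=S[1/\ell]\cup S[1/\ell']$, and the localization property (\Cref{f_!}) for this open cover forces $M=0$.

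For Part (3), I would split $M$ into its rational part $M_\Q\in\DM^A_{\et,c}(S,K)$ and its torsion component. The rational part inherits vanishing realizations through change of coefficients; since $R$ is a localization of the Dedekind domain $\mc{O}_K$ with nonzero prime ideals above infinitely many distinct rational primes, I can pick two valuations $v,v'$ non-negative on $R$ extending distinct $\ell$-adic valuations and invoke Part (2) to get $M_\Q=0$. Hence $M$ is torsion, so by \Cref{torsion motives are Artin} and \Cref{rigidity V2} it decomposes as $\bigoplus_p (j_p)_!\rho_!N_p$ for $p^\infty$-torsion étale sheaves $N_p$ on $S[1/p]$. For $p$ invertible in $R$, $N_p$ is automatically zero; for $p$ not inverted in $R$, a valuation $v\mid p$ non-negative on $R$ exists, and one unwinds $\bar{\rho}_v(M_p)=\rho_v(\rho_!N_p)$ using the description of $\rho_v$ in \Cref{l-adic real}, exploiting that $N_p$ is annihilated by a power of $\lambda$ (so the $\lambda$-adic completion collapses to a finite-level computation) together with the faithful flatness of $R\to R_v$ on $p^n$-torsion modules. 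The main obstacle is precisely this last step: rigorously identifying $\bar{\rho}_v$ on the torsion piece $\rho_!N_p$ with the $v$-primary component of $N_p$ after faithful base change requires careful interplay between \Cref{l-adic real}, \Cref{rigidity V2}, and the decomposition of $R/p^nR$ over the primes of $R$ above $p$; an alternative strategy that sidesteps this is to carry out the Noetherian-induction of Part (1) directly with integral coefficients, reducing via \Cref{Artin etale motives over a field} to the classical local-global principle that a finitely generated $R[1/p]$-module vanishes once all its completions at maximal ideals do.
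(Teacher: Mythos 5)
Your approach is genuinely different from the paper's. The paper reduces all three parts at once to the case where $S$ is the spectrum of a field, using the conservativity of the family $(i_x^*)_{x\in S}$ (\cite[4.3.17]{tcmm}) together with the compatibility of $\rho_v$ with the six functors; over $\Spec(k)$ it then invokes the identification $\DM^A_{\et,c}(k,R)\simeq\mc{D}^b(\Rep^A(G_k,R[1/p]))$ and \Cref{l-adic real 1}, after which all three statements become elementary statements about base change of Galois representations (faithfulness of $K\hookrightarrow K_v$ for (1) and (2), the local-global principle for finitely generated $R[1/p]$-modules for (3)). Your Noetherian-induction-plus-stratification argument for Parts (1) and (2) is a valid alternative, and your deduction of (2) from (1) via the Zariski cover $S=S[1/\ell]\cup S[1/\ell']$ is a nice variant. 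Two caveats on Part (1): the passage from ``$M$ lies in the thick subcategory generated by the $h_S(X_i)$'' to ``each $X_i|_U\to U$ is finite étale'' should instead invoke \Cref{stratification} directly, and your reliance on finding a \emph{regular} dense open $U$ implicitly needs $S$ excellent (or at least that the regular locus is open), a hypothesis the statement does not impose; the paper's pointwise reduction sidesteps this entirely since residue fields are always regular.

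For Part (3), you correctly identify the obstruction yourself. Decomposing into a rational and a torsion triangle and then invoking Part (2) for the rational part requires \emph{two} valuations non-negative on $R$ with distinct $\ell(v)$, which fails for instance when $R=\mc{O}_{K,\mathfrak{p}}$ is localized at a single prime. Moreover, pinning down $\bar\rho_v$ on the torsion piece $(j_p)_!\rho_!N_p$ in terms of the $\mathfrak{p}$-primary component of $N_p$ genuinely does require unwinding \Cref{l-adic real}, \Cref{rigidity V2}, and the decomposition of $R/p^nR$ over the primes of $R$ above $p$, and is not merely a formality. Your proposed workaround---running the Noetherian induction directly with integral coefficients and reducing to the local-global principle over $R[1/p]$ via \Cref{Artin etale motives over a field}---is, in essence, exactly what the paper's proof does after its reduction to fields, so the ``alternative strategy'' you suggest is not really an alternative but the paper's own route. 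Given this, the cleaner move is to adopt the paper's uniform pointwise reduction for all three parts rather than to patch the torsion bookkeeping separately.
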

\begin{proof} If $x$ is a point of $S$, denote by $i_x\colon \{ x\}\rar S$ the inclusion. The family of functors $(i_x^*)_{x \in S}$ is conservative by \cite[4.3.17]{tcmm}. In addition, the $v$-adic realization commutes with the six functors by \cite[7.2.16]{em}. Hence, to prove the proposition, we may assume that $S$ is the spectrum of a field $k$. Let $p$ be the characteristic exponent of $k$.

We prove the first assertion. Since $\ell\neq p$, we have $$\rho_v=\bar{\rho}_v.$$

Denote by $(BG_k)_{\mathrm{pro\acute{e}t}}$ the site of profinite continuous $G_k$-sets with covers given by continous surjections (see \cite[4.1.10]{bhatt-scholze}). Using \Cref{l-adic real 1}, the diagram
\[\begin{tikzcd}
	{\mathcal{D}^b(\mathrm{Rep}^A(G_k,K))} & {\Shv_{\mathrm{lisse}}(k,K)} & {\mathcal{DM}^A_{\mathrm{\acute{e}t}}(k,K)} \\
	{\mathcal{D}^b(\mathrm{Rep}(G_k,K_v))} & {\mathcal{D}(\Sh((BG_k)_{\mathrm{pro\acute{e}t}},K_v))} & {\mathcal{D}(k,K_v)}
	\arrow["{-\otimes_K K_v}"', from=1-1, to=2-1]
	\arrow[phantom, "\simeq", from=1-1, to=1-2]
	\arrow["{\rho_!}", from=1-2, to=1-3]
	\arrow["\rho_v",from=1-3, to=2-3]
	\arrow[phantom, "\simeq", from=2-2, to=2-3]
	\arrow["\alpha", from=2-1, to=2-2]
\end{tikzcd},\]
where $\alpha$ is the derived functor of the fully faithful exact functor $$\Rep(G_k,K_v)\rar \Sh((BG_k)_{\mathrm{pro\acute{e}t}},K_v),$$ is commutative.

Since the functors $\alpha$ and $-\otimes_K K_v$ are conservative and since the functor $\rho_!$ is an equivalence, the functor $\rho_v$ is conservative.

With the assumptions of the second assertion, one of the prime numbers $\ell$ and $\ell'$ is distinct from $p$ and the proof is the same.

We now prove the third assertion. Let $v$ be a valuation on $K$ such that $v(p)=0$ (\textit{i.e.} $v$ does not extend the $p$-adic valuation) Using \Cref{l-adic real 1}, the diagram
\[\begin{tikzcd}
	{\mathcal{D}^b(\mathrm{Rep}^A(G_k,R[1/p]))} & {\Shv_{\mathrm{lisse}}(k,R[1/p])} & {\mathcal{DM}^A_{\mathrm{\acute{e}t}}(k,R)} \\
	{\mathcal{D}^b(\mathrm{Rep}(G_k,R_v))} & {\mathcal{D}(\Sh((BG_k)_{\mathrm{pro\acute{e}t}},R_v))} & {\mathcal{D}(k,R_v)}
	\arrow["{-\otimes_R R_v}"', from=1-1, to=2-1]
	\arrow[phantom, "\simeq", from=1-1, to=1-2]
	\arrow["{\rho_!}", from=1-2, to=1-3]
	\arrow["\rho_v",from=1-3, to=2-3]
	\arrow[phantom, "\simeq", from=2-2, to=2-3]
	\arrow["\alpha", from=2-1, to=2-2]
\end{tikzcd},\]
where $\alpha$ is the derived functor of the fully faithful exact functor $$\Rep(G_k,R_v)\rar \Sh((BG_k)_{\mathrm{pro\acute{e}t}},R_v),$$ is commutative.

%Using \Cref{l-adic real 1}, $\bar{\rho}_\ell$ identifies with the functor $$-\otimes_{R[1/p]} \Z_\ell: \mc{D}^b(\Rep^A(G_k,R[1/p])\rar \mc{D}^b_c(\Rep(G_k,\Z_\ell))$$ if $\ell\neq p$.

Now, if $M$ is a complex in $\mc{D}^b(\Rep^A(G_k,R[1/p])$ such that for any valuation $v$ such that $v(p)=0$, we have $$M\otimes_R R_v=0,$$  then, the complex $M$ vanishes. Hence, the family $(\overline{\rho}_\ell)$ is conservative.
\end{proof}

\subsection{Constructible Artin Motives as Stratified Representations}
Let $S$ be a scheme. Recall that a \emph{stratification} of $S$ is a finite partition of $S$ into equidimensional locally closed connected subsets of $S$ (called \emph{strata}) such that the topological closure of any stratum is a union of strata.

%In this section, we show that for any constructible Artin étale motive $M$ over $S$, there exists a stratification such that, for any stratum $T$,  the Artin motive $M|_T$ is smooth.

%If $S$ is excellent, we can assume the strata to be regular, so that (under assumptions on the ring of coefficients; see below for more precise statements), $M$ is a gluing of Artin representations of the fundamental group of each stratum using the localization triangle \eqref{AM.localization}.

%\begin{definition} Let $S$ be a scheme, $\mc{S}$ be a stratification of $S$ and $R$ be a ring. The category of $\mc{S}$-\emph{constructible Artin étale motives} $\DM^{smA}_{\et,c}(\mc{S},R)$ of Artin étale motives $M$ such that for all stratum $T$ of $\mc{S}$, $M|_T$ is a smooth constructible Artin étale motive. It is a thick stable subcategory of $\DM^A_{\et,c}(S,R)$.
%\end{definition}

%If $S$ is a scheme and $\mc{S}$, $\mc{S}'$ are stratification of $S$ such that $\mc{S}'$ is finer that $\mc{S}$, then $\DM^A_{\et}(\mc{S},R)$ is a subcategory of $\DM^A_{\et}(\mc{S}',R)$. 
\begin{proposition}\label{AM.stratification} Let $S$ be a scheme, let $M$ be a constructible Artin étale motive over $S$. Then, there is a stratification of $S$ such that, for any stratum $T$,  the Artin motive $M|_T$ is smooth. Moreover if the scheme $S$ is excellent, we can assume the strata to be regular.
\end{proposition}
\begin{proof} We can assume that $S$ is reduced and connected. We proceed by noetherian induction on $S$. The local rings at the generic points of $S$ are fields. Let $M$ be a constructible Artin étale motive over $S$. As Artin étale motives over a field are smooth, by \Cref{continuity}, there is an open dense subscheme $U$ of $S$ such that $M|_U$ is smooth. If the scheme $S$ is excellent, we can furthermore assume that $U$ is regular. Using the induction hypothesis, there is a stratification of $S\setminus U$ such that for any stratum $T$, the motive $M|_T$ is a constructible smooth Artin motive and we are done.
\end{proof}

\begin{corollary}\label{stratification finitely many residue char} Let $S$ be an excellent scheme with finitely many residue characteristics and let $R$ be a regular good ring. Write $S_0=S\times_\Z \Q$ and $S_p=S\times_\Z \Z/p\Z$ for any prime number $p$. Then, we have prime numbers $p_1,\ldots,p_r$ such that $$S=S_0\sqcup S_{p_1} \sqcup \cdots \sqcup S_{p_r}.$$ 

Furthermore, the subscheme $S_0$ is open in $S$ and the subschemes $S_{p_1},\ldots,S_{p_r}$ are closed in $S$. 

Let $M$ be a constructible Artin étale motive over $S$. We have a stratification $\mc{S}_i$ of every $S_i$ when $i$ runs trhough the set $\{0,p_1,\ldots,p_r\}$ (and therefore a stratification of $S$), lisse étale sheaves $M_T\in \Shv_{\lis}(T,R)$ for $T$ in $ \mc{S}_0$ and lisse étale sheaves $M_T\in \Shv_{\lis}(T,R[1/p_i])$ for $T$ in $ \mc{S}_{p_i}$, such that for any stratum  $T$, we have $$M|_T=\rho_!(M_T).$$ 

In particular, the motive $M$ can be constructed from the $M_T$ by gluing using the localization triangle \eqref{AM.localization}.
\end{corollary}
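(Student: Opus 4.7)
The plan is to combine the stratification result from \Cref{stratification} with the identification of smooth Artin motives on regular schemes from \Cref{smooth Artin h-motives} and \Cref{smooth Artin h-motives in equal characteristic}.

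First I would establish the decomposition of $S$. The set of residue characteristics of $S$ is finite by hypothesis; let $\{p_1,\ldots,p_r\}$ be its set of positive elements. Each closed subscheme $S_{p_i}=S\times_\Z \Z/p_i\Z$ is the preimage of the closed point $(p_i)$ along the structural morphism $S\rar \Spec(\Z)$, and the open subscheme $S_0=S\times_\Z \Q$ is the complement of the finite union $S_{p_1}\cup\cdots\cup S_{p_r}$, so that we have the announced disjoint decomposition with $S_0$ open and the $S_{p_i}$ closed.

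Next, I would apply \Cref{stratification} to the excellent scheme $S$ to obtain a stratification $\mc{S}$ of $S$ into regular locally closed subschemes such that for every stratum $T\in\mc{S}$ the restriction $M|_T$ lies in $\mc{DM}^{smA}_{\et,c}(T,R)$. Since the strata are connected and lie in the partition $S_0\sqcup S_{p_1}\sqcup \cdots \sqcup S_{p_r}$ (by continuity of the characteristic function on a connected scheme), each stratum is entirely contained in exactly one of the pieces $S_0,S_{p_1},\ldots,S_{p_r}$, and we set $\mc{S}_i:=\{T\in\mc{S}\mid T\subseteq S_i\}$ for $i\in\{0,p_1,\ldots,p_r\}$.

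Then I would apply the equivalences of \Cref{smooth Artin h-motives} and \Cref{smooth Artin h-motives in equal characteristic} stratum by stratum. If $T\in\mc{S}_0$, then $T$ is a regular scheme in which all residue characteristic exponents (equal to $1$) are invertible in $R$, so \Cref{smooth Artin h-motives} gives a lisse étale sheaf $M_T\in\Shv_{\lis}(T,R)$ with $\rho_!M_T=M|_T$. If $T\in\mc{S}_{p_i}$, then $T$ is a connected regular scheme of characteristic exponent $p_i$, so \Cref{smooth Artin h-motives in equal characteristic} gives $M_T\in\Shv_{\lis}(T,R[1/p_i])$ with $\rho_!M_T=M|_T$.

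Finally, the last assertion follows by induction on the number of strata using the localization property of the fibered category $\DM^A_{\et,c}(-,R)$ recalled in \Cref{f_!}: if $U$ is an open union of strata in $S$ with complementary closed immersion $i\colon Z\rar S$ and open immersion $j\colon U\rar S$, the localization triangle $j_!j^*M\rar M\rar i_*i^*M$ expresses $M$ as an extension in $\DM^A_{\et,c}(S,R)$ of $i_*(M|_Z)$ by $j_!(M|_U)$, and iterating this over the stratification reconstructs $M$ from the pieces $\rho_!M_T$ by gluing. No serious obstacle arises: the main content is organisational, ensuring the stratification is compatible with the decomposition by residue characteristics so that the appropriate equivalence between smooth Artin motives and lisse sheaves applies on every stratum.
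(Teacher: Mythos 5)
The decomposition of $S$, the stratum-by-stratum application of the equivalences, and the gluing argument at the end are all fine; the gap is in the middle. You claim that, after applying \Cref{stratification} to $S$, "each stratum is entirely contained in exactly one of the pieces $S_0,S_{p_1},\ldots,S_{p_r}$" because the characteristic is continuous on a connected scheme. This is false: the residue characteristic is \emph{not} a locally constant function. The scheme $\Spec(\Z_p)$ is connected, regular, and excellent, yet its generic point has residue characteristic $0$ and its closed point has residue characteristic $p$; the partition $\Spec(\Z_p)=\Spec(\Q_p)\sqcup\Spec(\mb{F}_p)$ is into an open and a closed subset, not into connected components. Taking $S=\Spec(\Z_p)$ and $M=\un_S$, the statement of \Cref{stratification} is satisfied by the trivial stratification $\{S\}$, whose single stratum does not lie in one piece. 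So a stratification furnished by \Cref{stratification} need not refine $S_0\sqcup S_{p_1}\sqcup\cdots\sqcup S_{p_r}$, and one cannot simply set $\mc{S}_i:=\{T\in\mc{S}\mid T\subseteq S_i\}$.

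The repair is to apply \Cref{stratification} to each piece separately rather than to $S$ globally: $S_0$ is open in $S$ and each $S_{p_i}$ is closed in $S$, so all are excellent, and $M|_{S_i}$ is a constructible Artin motive over $S_i$, so each $S_i$ admits a stratification with regular strata on which $M$ restricts to a smooth Artin motive. Since each $S_{p_i}$ is closed in $S$, the closure of any stratum of $\mc{S}_{p_i}$ stays inside $S_{p_i}$; by contrast the closure in $S$ of a stratum of $\mc{S}_0$ may meet the closed pieces, so one should first fix $\mc{S}_0$ and then run the noetherian induction of \Cref{stratification} on each $S_{p_i}$ in a way that refines the closed traces $\overline{T}\cap S_{p_i}$ for $T\in\mc{S}_0$, so that the union $\mc{S}_0\cup\mc{S}_{p_1}\cup\cdots\cup\mc{S}_{p_r}$ satisfies the closure condition for a stratification of $S$. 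With that adjustment, your application of \Cref{smooth Artin h-motives} on the strata in $\mc{S}_0$, of \Cref{smooth Artin h-motives in equal characteristic} on the strata in $\mc{S}_{p_i}$, and the final induction via the localization triangle all go through.
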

\begin{corollary} Let $S$ be an excellent scheme and let $R$ be a regular good ring. Assume that all residue characteristics of $S$ are invertible in $R$.

Let $M$ be a constructible Artin étale motive over $S$. We have a stratification of $S$ and lisse étale sheaves $M_T\in \Shv_{\lis}(T,R)$ for every stratum $T$, such that we have $$M|_T=\rho_!(M_T).$$ 

In particular, the motive $M$ can be constructed from the $M_T$ by gluing using the localization triangle \eqref{AM.localization}.
\end{corollary}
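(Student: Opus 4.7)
The plan is to combine the stratification proposition (the previous corollary's source) with the identification theorem \Cref{smooth Artin h-motives} and the rigidity result for smooth Artin motives on regular bases.

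First, I would apply \Cref{stratification} to the constructible Artin étale motive $M$. Since $S$ is excellent, this produces a stratification of $S$ into regular strata $T$ such that for every stratum the restriction $M|_T$ is a constructible smooth Artin étale motive, i.e.\ an object of $\DM^{smA}_{\et,c}(T,R)$. Note that each stratum $T$ is a locally closed subscheme of $S$, so the residue characteristic exponents of $T$ form a subset of those of $S$; by hypothesis they are all invertible in $R$.

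Next, on each regular stratum $T$, I would invoke \Cref{smooth Artin h-motives}: the functor
\[
\rho_!\colon \Shv_{\lis}(T,R)\longrightarrow \DM^{smA}_{\et,c}(T,R)
\]
is a monoidal equivalence. Taking $M_T$ to be the object of $\Shv_{\lis}(T,R)$ whose image under this equivalence is $M|_T$ gives exactly the desired lisse étale sheaf satisfying $\rho_!(M_T)=M|_T$.

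For the final assertion about reconstruction by gluing, I would argue by noetherian induction on the number of strata. Choose an open stratum $T\subseteq S$ (or more generally an open union of strata where $M$ is already known to be built from the $M_T$), let $j\colon U\hookrightarrow S$ be the inclusion of this open and let $i\colon Z\hookrightarrow S$ be the complementary closed immersion; by \Cref{f_!} the category $\DM^A_{\et,c}$ satisfies the localization property \eqref{localization}, which yields an exact triangle
\[
j_!j^*M\longrightarrow M\longrightarrow i_*i^*M.
\]
The outer terms are, by induction, constructed from $\rho_!$ of lisse étale sheaves on the relevant strata (using that $j^*$ and $i^*$ commute with $\rho_!$ via \Cref{442em}), so $M$ itself is obtained from the $M_T$'s by successive extensions of this form. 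Since each ingredient of the argument is essentially formal once \Cref{stratification} and \Cref{smooth Artin h-motives} are in hand, there is no real obstacle here; the only subtlety worth noting is to ensure that \Cref{smooth Artin h-motives} applies on every stratum, which is why we needed regularity of the strata (hence the excellence hypothesis on $S$) in combination with the invertibility hypothesis on residue characteristics.
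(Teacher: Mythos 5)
Your proof is correct and takes exactly the approach the paper intends: apply \Cref{stratification} (using excellence to get regular strata), note that the residue characteristic exponents of each stratum are among those of $S$ so \Cref{smooth Artin h-motives} applies, take $M_T$ to be the preimage of $M|_T$ under the equivalence $\rho_!$, and conclude the gluing statement by noetherian induction on the strata via the localization triangle. The paper states this as an immediate corollary without an explicit proof, and your argument supplies precisely the missing details.
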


\begin{corollary}\label{stratification Zp} Let $S$ be an excellent scheme, let $p$ be a prime number and let $R$ be a $\Z_{(p)}$-algebra.

Write $S_p=S\times_\Z \Z/p\Z$. Then, we have $$S=S[1/p]\sqcup S_{p}.$$

Furthermore the subscheme $S[1/p]$ is open in $S$ and the subscheme $S_{p}$ is closed in $S$. 

 Let $M$ be a constructible Artin étale motive over $S$ with coefficients in $R$. We have a stratification $\mc{S}[1/p]$ of $S[1/p]$ with regular strata, a stratification $\mc{S}_{p}$ of $S_{p}$ with regular strata, lisse étale sheaves $M_T\in \Shv_{\lis}(T,R[1/p])$ for $T$ in $\mc{S}_{p}$ and lisse étale sheaves $M_T\in \Shv_{\lis}(T,R)$ for $T\in \mc{S}[1/p]$, such that for any stratum  $T$, we have $$M|_T=\rho_!(M_T).$$ 

 In particular, the motive $M$ can be constructed from the $M_T$ by gluing using the localization triangle \eqref{AM.localization}.
\end{corollary}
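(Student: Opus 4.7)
The plan is to reduce this corollary to the previous one, or equivalently to give a direct argument parallel to \Cref{stratification finitely many residue char} but adapted to the case of a $\Z_{(p)}$-algebra of coefficients, where only two ``types'' of residue characteristic need to be distinguished: the characteristic $p$ and everything else.

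First, observe that the set-theoretic decomposition $S = S[1/p] \sqcup S_p$ is immediate: $S_p$ is defined as the closed subscheme $S \times_\Z \Z/p\Z$ cut out by $p = 0$, and $S[1/p]$ is its open complement. Next, I apply Proposition \Cref{stratification} to $M$ on the excellent scheme $S$: this yields a stratification of $S$ with regular (connected) strata such that $M$ restricted to each stratum is a constructible smooth Artin motive. Refining this stratification by intersecting each stratum with the decomposition $S = S[1/p] \sqcup S_p$, I obtain stratifications $\mc{S}[1/p]$ of $S[1/p]$ and $\mc{S}_p$ of $S_p$ by regular strata, and on every stratum $T$ of either type, the restriction $M|_T$ lies in $\DM^{smA}_{\et,c}(T, R)$.

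Next I identify the smooth Artin motive $M|_T$ with a lisse étale sheaf, distinguishing the two cases according to which of $S[1/p]$ or $S_p$ contains $T$. If $T$ is a stratum of $\mc{S}[1/p]$, then $p$ is invertible on $T$, and all residue characteristic exponents of $T$ are prime numbers distinct from $p$, hence invertible in the $\Z_{(p)}$-algebra $R$. Theorem \Cref{smooth Artin h-motives} then provides a lisse sheaf $M_T \in \Shv_{\lis}(T, R)$ with $\rho_!(M_T) = M|_T$. If instead $T$ is a stratum of $\mc{S}_p$, then $T$ is regular of characteristic exponent $p$, and Corollary \Cref{smooth Artin h-motives in equal characteristic} yields a lisse sheaf $M_T \in \Shv_{\lis}(T, R[1/p])$ with $\rho_!(M_T) = M|_T$.

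Finally, for the ``in particular'' statement, I invoke the localization property of $\DM^A_{\et,(c)}(-, R)$ established in Proposition \Cref{f_!}: given the stratification above, one reconstructs $M$ from the family of restrictions $(M|_T)$ by an inductive gluing using the localization triangles \eqref{localization} associated to the open/closed decompositions along the stratification filtration. There is no real obstacle in this proof; the statement is a straightforward specialization of the ideas behind \Cref{stratification finitely many residue char}, the only point needing slight care being the use of Corollary \Cref{smooth Artin h-motives in equal characteristic} (rather than Theorem \Cref{smooth Artin h-motives}) on the strata of characteristic $p$, which forces the inversion of $p$ in the coefficient ring precisely on those strata.
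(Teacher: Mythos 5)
The overall strategy is right and matches what the paper implicitly intends: decompose $S$ into the open piece $S[1/p]$ and the closed piece $S_p$, stratify by regular strata on which $M$ is smooth Artin, and identify $M|_T$ with a lisse sheaf via \Cref{smooth Artin h-motives} on strata in $S[1/p]$ and via \Cref{smooth Artin h-motives in equal characteristic} on strata in $S_p$. However, there is a gap in the way you produce the stratifications. After applying \Cref{stratification} to $S$, you intersect each regular stratum $T$ with $S_p$ and assert that the result is again regular. This is false in general: $T\cap S_p$ is the fibre $T/pT$, which can be singular even when $T$ is regular. For instance, $T=\Spec(\Z[x,y]/(x^2+y^2-p))$ is regular at the closed point of the special fibre lying over the origin, but $T/pT=\Spec(\mb{F}_p[x,y]/(x^2+y^2))$ is singular there.

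The correct (and presumably intended) way around this is to apply \Cref{stratification} separately to $M|_{S[1/p]}$ on $S[1/p]$ and to $M|_{S_p}$ on $S_p$, rather than stratifying $S$ first and intersecting. Both $S[1/p]$ and $S_p$ are excellent, being open resp.\ closed subschemes of the excellent scheme $S$, so each application of \Cref{stratification} yields a stratification with regular strata contained entirely in one piece of the decomposition. With that adjustment, your identification of $M|_T$ on each stratum and the gluing statement go through as written.
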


\section{The Ordinary Homotopy \MakeLowercase{t}-structure on Artin Motives}\label{section 4}
\subsection{Definition for Non-constructible Artin Motives}
\begin{definition}\label{ordinary} Let $S$ be a scheme and let $R$ be a ring. The \emph{ordinary homotopy} t-structure on $\DM_{\et}^A(S,R)$ is the t-structure generated in the sense of \Cref{AM.t-structure generated} by the family of the motives $M_S^{\BM}(X)$ with $X$ quasi-finite over $S$.
\end{definition}
We denote with the small letters ${\ord}$ the notions related to the ordinary homotopy t-structure on $\DM_{\et}^A(S,R)$. For instance, we denote by ${}^{\ord}\tau_{\leqslant 0}$ the left truncation with respect to the ordinary homotopy t-structure and by $t_{\ord}$ the ordinary homotopy t-structure itself.
Using the same method as \Cref{AM.generators}, this t-structure is also generated by family of the motives $M_S^{\BM}(X)$ with $X$ finite (resp. étale) over $S$. Thus, an object $M$ of $\DM_{\et}^A(S,R)$ is in $\DM_{\et}^A(S,R)^{\geqslant n}$ if and only if for any étale $S$-scheme $X$, the complex $\Map_{\DM_{\et}(S,R)}(M_S(X),M)$ is $(n-1)$-connected.

\begin{proposition}\label{t-adj ord} Let $R$ be a ring. Let $f$ be a quasi-finite morphism of schemes and let $g$ be a morphism of schemes. Then, the functors $f_!$, $g^*$ and $-\otimes_S M$ are right $t_\mathrm{ord}$-exact.
\end{proposition}
\begin{proof} The full subcategory of those Artin motives $N$ such that $f_!N$, $g^*N$ and $N \otimes M$ are $t_{\ord}$-non-positive is closed under extensions, small coproducts and negative shifts and contains the set of generators of the t-structure. Thus, by definition of $t_{\ord}$, this subcategory contains any $t_{\ord}$-non-positive object. Hence, the functors $f_!$, $g^*$ and $- \otimes M$ are right $t_{\ord}$-exact.
\end{proof}

\begin{corollary}\label{AM.t-adj2 ord} Let $R$ be a ring and let $f$ be a morphism of schemes. Then, 
\begin{enumerate} 
\item If $f$ is étale, the functor $f^*=f^!$ is $t_{\ord}$-exact.
\item If $f$ is finite, the functor $f_!=f_*$ is $t_{\ord}$-exact.
\end{enumerate}
\begin{proof}
    If $f$ is étale, the functor $f^!$ is right adjoint to $f_!$ and therefore left $t_{\ord}$-exact. The case where $f$ is finite is handled similarly.
\end{proof}
\end{corollary}

\begin{lemma}\label{coeur} Let $S$ be a scheme and let $R$ be a ring. Let $X$ be finite over $S$. Then, the motive $h_S(X)$ lies in the heart of the ordinary homotopy t-structure.
\end{lemma}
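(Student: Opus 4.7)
The plan is to reduce, via the $t_{\ord}$-exactness of direct image along a finite morphism, to checking that $\un_X$ lies in the heart of the ordinary homotopy t-structure on $\DM^A_{\et}(X,R)$. Indeed, since $f\colon X \to S$ is finite, \Cref{t-adj2 ord}(2) asserts that $f_* = f_! = \omega^0 f_*$ is $t_{\ord}$-exact, so it carries heart objects to heart objects; it therefore suffices to establish both $\un_X \leqslant_{\ord} 0$ and $\un_X \geqslant_{\ord} 0$ in $\DM^A_{\et}(X,R)$.

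The first inequality is essentially tautological: $\un_X = M^{\BM}_X(X)$ is one of the declared generators of the ordinary homotopy t-structure (take the quasi-finite $X$-scheme to be $X$ itself), so it lies in the non-positive part by definition.

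For the second inequality, I will invoke the characterization given right after \Cref{ordinary}, which identifies $\DM^A_{\et}(X,R)^{\geqslant 0}$ with the objects $M$ such that $\Hom_{\DM_{\et}(X,R)}(M_X(Y), M[i]) = 0$ for every étale $Y \to X$ and every $i < 0$. Since $g\colon Y\to X$ is étale one has $g^! = g^*$, hence $g^!\un_X = \un_Y$ and $M_X(Y) = g_!\un_Y$; the $(g_!, g^!)$-adjunction then rewrites the relevant group as $\Hom_{\DM_{\et}(Y,R)}(\un_Y, \un_Y[i]) = \Hl^{i,0}_{\mc{M},\et}(Y,R)$, which vanishes for $i < 0$ by \Cref{n<0 m<=0}.

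I do not anticipate a serious obstacle: the entire analytic content is packaged in the cited vanishing of negative-degree weight-zero étale motivic cohomology, together with the $t_{\ord}$-exactness of finite direct image established earlier; the argument is a short assembly of these two inputs.
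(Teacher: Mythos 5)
Your proof is correct and follows the same route as the paper: reduce to $\un_X$ via the $t_{\ord}$-exactness of $f_*$ for $f$ finite (\Cref{t-adj2 ord}), note that $\un_X$ is a generator hence $t_{\ord}$-non-positive, and verify $t_{\ord}$-non-negativity by reducing the relevant mapping groups to weight-zero étale motivic cohomology $\Hl^{n,0}_{\mc{M},\et}$ and invoking \Cref{n<0 m<=0}. Your version merely makes explicit the adjunction $(g_!,g^!)$ and the identification $M_X(Y)=g_!\un_Y$ that the paper treats as definitional.
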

\begin{proof} If $f$ is finite, the functor $f_*$ is $t_{\ord}$-exact by \Cref{AM.t-adj2 ord} and therefore, we may assume that $X=S$. By definition, we have $$\un_S\leqslant_{\ord} 0.$$ 

Let $U$ be étale over $S$, and let $n$ be an integer, by definition, we have $$\pi_n\Map_{\DM_{\et}(S,R)}(M_S(U),\un_S)=\Hl^{n,0}_{\mc{M},\et}(U,R).$$

But if the integer $n$ is negative, this group vanishes by \Cref{n<0 m<=0} and the result follows.
\end{proof}

We now explore the compatibility of the ordinary homotopy t-structure with respect to change of coefficients.

\begin{proposition}\label{coeff change ordinary} Let $S$ be a scheme, let $R$ be a ring, let $n$ be an integer and let $A$ be a localization of $\Z$. Denote by $\sigma_n\colon R\rar R/nR$ and by $\sigma_A\colon R\rar R\otimes_{\Z} A$ the natural ring morphisms. 
Recall the notations of \Cref{Change of coefficients}.
Then,
\begin{enumerate}\item The functors $(\sigma_A)_*$ and $(\sigma_n)_*$ are $t_{\ord}$-exact.
\item the functor $\sigma_A^*$ is $t_{\ord}$-exact.
\end{enumerate}

\end{proposition}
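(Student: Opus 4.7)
The plan is to reduce the two assertions to the behaviour of $\sigma^*$ and $\sigma_*$ on the generators $M_S^{\BM}(X,R)$ of $t_\ord$ (with $X$ quasi-finite over $S$), combined with the fact from Proposition~\ref{commutation}(4) that $\sigma^*$ commutes with the six functors, and in particular that $\sigma^*M_S^{\BM}(X,R) = M_S^{\BM}(X,R')$.

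First, the left $t_\ord$-exactness of $(\sigma_A)_*$ and $(\sigma_n)_*$ in~(1), as well as the right $t_\ord$-exactness of $\sigma_A^*$ in~(2), follow immediately: the functors $\sigma_A^*$ and $\sigma_n^*$ are exact left adjoints, preserve colimits, and send generators of $t_\ord$ to generators of $t_\ord$, hence preserve the $t_\ord$-non-positive part, which is the smallest subcategory generated under colimits and extensions by those generators; by adjunction the right adjoints are then left $t_\ord$-exact.

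Next, for the right $t_\ord$-exactness of $(\sigma_A)_*$ and $(\sigma_n)_*$, I would use that these forgetful functors are exact and colimit-preserving, so it suffices to show that $\sigma_*M_S^{\BM}(X,R')$ is $t_\ord$-non-positive for $X$ quasi-finite over $S$. For $\sigma_n$, the short exact sequence $R \xrightarrow{n} R \to R/nR$ produces an exact triangle
$$M_S^{\BM}(X,R) \xrightarrow{n} M_S^{\BM}(X,R) \longrightarrow (\sigma_n)_*M_S^{\BM}(X, R/nR),$$
exhibiting the image as the cofiber of an endomorphism of a generator. For $\sigma_A$ with $A = \Sigma^{-1}\Z$, writing $A = \colim_{s \in \Sigma} \Z$ as a filtered colimit of multiplications gives $(\sigma_A)_*M_S^{\BM}(X, R\otimes_\Z A) = \colim_s M_S^{\BM}(X,R)$, a filtered colimit of generators, hence $t_\ord$-non-positive.

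Finally, for the left $t_\ord$-exactness of $\sigma_A^*$ in~(2), I would note that $(\sigma_A)_*$ detects $t_\ord$-non-negativity: the adjunction identity
$$\Map(M_S^{\BM}(X, R\otimes_\Z A), N) = \Map(M_S^{\BM}(X,R), (\sigma_A)_*N)$$
shows that $N$ is $t_\ord$-non-negative in $\DM^A_\et(S, R\otimes_\Z A)$ if and only if $(\sigma_A)_*N$ is $t_\ord$-non-negative in $\DM^A_\et(S,R)$. Applied to $N = \sigma_A^*M$, where $(\sigma_A)_*\sigma_A^*M = M\otimes_\Z A$ is a filtered colimit of copies of $M$ along multiplications by elements of $\Sigma$, the only remaining point to verify is that filtered colimits preserve $t_\ord$-non-negative objects. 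This is the single non-formal step of the argument and will follow from compactness of the generators $M_S^{\BM}(X,R)$ in $\DM^A_\et(S,R)$, which in turn reflects the standard fact that constructible étale motives are compact in $\DM_\et(S,R)$.
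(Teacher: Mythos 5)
Your outline reproduces the paper's argument almost exactly: you reduce the right $t_\ord$-exactness of the left adjoints and the left $t_\ord$-exactness of the right adjoints to the behaviour of $\sigma^*$ on the generating objects $M_S^{\BM}(X,R)$, you identify $(\sigma_n)_*M_S^{\BM}(X,R/nR)$ as a cofiber of multiplication by $n$ and $(\sigma_A)_*M_S^{\BM}(X,R\otimes_\Z A)$ as a filtered colimit along multiplications by $s\in\Sigma$, and you reduce the remaining assertion to the statement that $M\otimes_\Z A=\colim_\Sigma M$ is $t_\ord$-non-negative whenever $M$ is. Up to this point everything is correct and is exactly what the paper does.

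The gap is in how you propose to close this last reduction. You invoke ``the standard fact that constructible étale motives are compact in $\DM_\et(S,R)$''. This is \emph{not} true for a general coefficient ring $R$. Compactness of the generators $M_S(X)$ (or $R_S(X)$ in $\Shv(S_\et,R)$) requires control of étale cohomological dimension, and the paper itself only invokes compactness under the hypothesis that $R$ is a $\Q$-algebra (see the proof of Theorem~\ref{smooth Artin h-motives}, which cites \cite[1.1.4, 1.1.9]{em} \emph{after} reducing to $\Q$-coefficients). Concretely, over a base like $\Spec \Q$ with $R=\Z$, the base field has infinite étale cohomological dimension and the unit $\un$ is no longer compact in the hypersheaf/étale-motive category; more elementarily, $\Z$ with trivial action is not a perfect, hence not a compact, object of $\mc{D}(\Z[G])$ for any nontrivial finite group $G$, so the analogous statement over a field already fails. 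Since the statement you are proving makes no $\Q$-algebra hypothesis, you cannot lean on this.

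What actually closes the gap in the paper is Proposition~\ref{commutation}(3): for $M$ constructible and $A$ a localization of $\Z$, the natural map
$$\sigma_A^*\Map_{\mr{D}(S,R)}(M,N)\rar \Map_{\mr{D}(S,R)}(M,\sigma_A^*N)$$
is an equivalence. Applied to $M=M_S^{\BM}(X,R)$ and $N$ your $t_\ord$-non-negative object, this gives $\Map(M_S^{\BM}(X,R'),\sigma_A^*N)=\Map(M_S^{\BM}(X,R),N)\otimes_R (R\otimes_\Z A)$, which is $(-1)$-connected because $R\otimes_\Z A$ is flat over $R$. This commutation result is not a compactness statement: its proof in the paper uses \cite[5.4.11, 1.1.11]{em} and is specific to tensoring with a localization of $\Z$ (it proceeds by a dévissage through $\Q$- and torsion-coefficients). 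So you should replace your appeal to compactness with a citation of Proposition~\ref{commutation}(3). Once that substitution is made, your proof is correct and coincides with the paper's, with the minor cosmetic difference that you phrase the last step as ``$(\sigma_A)_*$ detects non-negativity, then compute $(\sigma_A)_*\sigma_A^*M$'' rather than computing $\Map(M_S^{\BM}(X,R'),\sigma_A^*M)$ directly.
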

\begin{proof} This follows from \Cref{coeff change} and from \Cref{commutation}.
\end{proof}

\begin{proposition}\label{ordinary homotopy torsion} Let $S$ be a scheme and let $R$ be a ring. Recall \Cref{torsion motives are Artin}. Then, the inclusion functor $$\DM_{\et,\mathrm{tors}}(S,R)\rar \DM_{\et}^A(S,R)$$ is t-exact when the left hand side is endowed with the ordinary t-structure (see \Cref{ordinary torsion}) and when the right hand side is endowed with the ordinary homotopy t-structure.
\end{proposition}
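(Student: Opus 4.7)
The plan is to use the decomposition furnished by \Cref{rigidity V1,rigidity V2} to reduce the desired t-exactness to a per-prime statement, and then to verify that per-prime statement by treating the two evident factors separately. By \Cref{rigidity V1}(1,2,3), any torsion motive $M$ writes uniquely as $\bigoplus_p (j_p)_!\rho_! N_p$ for some family $(N_p)_p$ with $N_p\in\Shv_{p^\infty-\mathrm{tors}}(S[1/p]_\et, R)$. (Note that $(j_p)_!$ and $(j_p)_*$ coincide on $p^\infty$-torsion motives over $S[1/p]$: both preserve $p^\infty$-torsion by \Cref{commutation}, and both are therefore sections of the equivalence $(j_p)^*$ of \Cref{rigidity V1}(2).) By \Cref{ordinary torsion}, $M$ is ordinary $t$-non-positive (resp.\ non-negative) if and only if each $N_p$ is; and since $t_\ord$-truncations commute with arbitrary direct sums, $M$ is $t_\ord$-non-positive (resp.\ non-negative) if and only if each $(j_p)_!\rho_! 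N_p$ is. It therefore suffices to show, for each prime $p$, that the composite $(j_p)_!\rho_!\colon \Shv_{p^\infty-\mathrm{tors}}(S[1/p]_\et, R)\to \DM_\et^A(S,R)$ is t-exact.

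For the first factor, the key input is that by rigidity (\Cref{rigidity V1}(3)), $\rho_!$ is an equivalence---hence in particular fully faithful---on $p^\infty$-torsion sheaves over $S[1/p]$. Since $\rho_! R_{S[1/p]}(Y) = M_{S[1/p]}(Y)$ for $Y$ étale, adjunction yields
\[
\Map_{\DM_\et(S[1/p],R)}(M_{S[1/p]}(Y),\rho_!N) = \Map_{\Shv(S[1/p]_\et,R)}(R_{S[1/p]}(Y),N) = N(Y)
\]
for any $p^\infty$-torsion sheaf $N$ and any étale $Y\to S[1/p]$. Combining this identification with the description of the ordinary t-structure on $\Shv(S[1/p]_\et, R)$ via the generators $R_{S[1/p]}(Y)$ (\Cref{generators ord}) and of the ordinary homotopy t-structure via $M_S^{BM}(X)=M_S(X)$ for $X$ étale (the discussion after \Cref{ordinary}), both non-positivity and non-negativity of $\rho_! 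N$ reduce to the same conditions on $N(Y)$ for all étale $Y$, proving that $\rho_!$ is t-exact on $p^\infty$-torsion.

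For the second factor $(j_p)_!$ restricted to $p^\infty$-torsion motives on $S[1/p]$, right $t_\ord$-exactness is immediate from \Cref{t-adj ord}(1). For left $t_\ord$-exactness I would exploit the identification $(j_p)_! = (j_p)_*$ on this subcategory together with \Cref{omega^0 torsion} (so that $\omega^0(j_p)_* = (j_p)_*$ on torsion Artin motives) and the adjunction $((j_p)^*, \omega^0(j_p)_*)$ of \Cref{t-adj ord}(3): since $(j_p)^*$ is $t_\ord$-exact by \Cref{t-adj2 ord}(1), its right adjoint $\omega^0(j_p)_*$ is left $t_\ord$-exact, whence so is $(j_p)_!$ on $p^\infty$-torsion. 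The one real subtlety in the whole argument is this coincidence $(j_p)_! = (j_p)_*$ on $p^\infty$-torsion, which is what enables both directions of exactness for a functor that would otherwise be only one-sided $t_\ord$-exact.
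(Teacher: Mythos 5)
Your reduction to a per-prime statement, and your handling of both factors $\rho_!$ and $(j_p)_!$, are correct and take a genuinely different route from the paper's own proof: the paper does not observe $(j_p)_! = (j_p)_* = \omega^0(j_p)_*$ on $p^\infty$-torsion and deduce left $t_\ord$-exactness of $(j_p)_!$ from the adjunction $((j_p)^*,\omega^0(j_p)_*)$; instead it computes $\Map(M_S(X),M)$ directly, pulling it back to the small étale site over $S[1/p]$ via $(j_p)_*j_p^*$ and the rigidity theorem. Your route is cleaner on that part.

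However, there is a genuine gap at the reduction step. You assert ``since $t_\ord$-truncations commute with arbitrary direct sums, $M$ is $t_\ord$-non-negative if and only if each $(j_p)_!\rho_! N_p$ is.'' The only-if direction is automatic (retracts). But the if direction — that a direct sum of $t_\ord$-non-negative objects is $t_\ord$-non-negative — is exactly where the proof must do work, and the blanket commutation claim is unjustified. It would follow if the generators $M_S(X)$ were compact in $\DM^A_{\et}(S,R)$, but with integral (non-torsion, non-$\Q$-algebra) coefficients $\rho_!$ does not preserve compactness, and $M_S(X)=\rho_!R_S(X)$ need not be compact. The specific decomposition $M=\bigoplus_p M_p$ into $p^\infty$-torsion parts \emph{does} commute with $\Map(M_S(X),-)$, but this is not a general fact about direct sums: it is \Cref{torsion commutation}(3), which exploits the constructibility of $M_S(X)$ together with the identity $\Q/\Z=\bigoplus_p\Z(p^\infty)$ and the change-of-coefficients equivalences of \Cref{commutation}. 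This is precisely the input the paper uses. Replacing your blanket assertion with an explicit appeal to \Cref{torsion commutation}(3) (applied with $P=M$, so that $\Map(M_S(X),M)=\bigoplus_p\Map(M_S(X),M_p)$) closes the gap; the rest of your argument then goes through as written.
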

\begin{proof} If $X$ is étale over $S$, the functor $\rho_!$ maps the étale sheaf $R_S(X)$ to the motive $M_S(X)$ which is $t_{\ord}$-non-positive. Recall that \Cref{generators ord} asserts that the sheaves of the form $R_S(X)$ with $X$ étale over $S$ generate the ordinary t-structure on the stable category $\Shv(S_{\et},R)$. Since the exact functor $$\rho_!\colon \Shv(S_{\et},R)\rar \DM_{\et}^A(S,R)$$ of \Cref{small et site 2} is compatible with small colimits, it is therefore right t-exact.

If $p$ is a prime number, we denote by $j_p\colon S[1/p]\rar S$ the open immersion. Recall that by \Cref{rigidity V1}, the functor $j_p^*$ induces an equivalence $$\DM_{\et,p^\infty-\mathrm{tors}}(S,R)\rar \DM_{\et,p^\infty-\mathrm{tors}}(S[1/p],R)$$ with inverse $(j_p)_!=(j_p)_*$.

Now, let $M$ be a torsion étale motive. Assume first that $M$ is t-non-positive with respect to the ordinary t-structure. By definition, this means that $$M=\bigoplus_{p\text{ prime}} (j_p)_!\rho_! N_p,$$ where for any prime number $p$, $N_p$ is a $p^\infty$-torsion étale sheaf which is t-non-positive with respect to the ordinary t-structure . 

By right t-exactness of the functor $\rho_!$, for any prime number $p$, the motive $\rho_!N_p$ is $t_{\ord}$-non-positive. Hence, using \Cref{t-adj ord}, so is the motive $(j_p)_!\rho_!N_p$. Therefore, the motive $M$ is $t_{\ord}$-non-positive.

Assume now that $M$ is t-non-negative with respect to the ordinary t-structure. Let $X$ be an étale $S$-scheme. By \Cref{torsion commutation}, we get $$\Map_{\DM_{\et}(S,R)}(M_S(X),M)=\bigoplus_{p\text{ prime}} \colim_n\Map_{\DM_{\et}(S,R)}(M_S(X),M\otimes_\Z \Z/p^n\Z[-1]).$$

If $p$ is a prime number and if $n$ is an integer, denote by $M_{p^n}$ the motive $M\otimes_\Z \Z/p^n\Z[-1]$. We have  $$\Map_{\DM_{\et}(S,R)}(M_S(X),M_{p^n})=\Map_{\DM_{\et}(S,R/nR)}(M_S(X,R/nR),M_{p^n}).$$

Since by \cite[A.3.4]{em}, we have $$M_{p^n}=(j_p)_*j_p^*M_{p^n},$$ we get
$$\Map_{\DM_{\et}(S,R/nR)}(M_S(X,R/nR),M_{p^n})=\Map_{\DM_{\et}(S[1/p]_{\et},R/nR)}(M_{S[1/p]}(X[1/p]),j_p^*(M_{p^n})).$$

And therefore, we get $$\Map_{\DM_{\et}(S,R)}(M_S(X),M_{p^n})=\Map_{\Shv(S[1/p]_{\et},R)}(R_{S[1/p]}(X[1/p]),\rho^!j_p^*(M_{p^n})).$$

If $p$ is a prime number and if $n$ is an integer, the $p^\infty$-torsion étale motive $M\otimes_\Z \Z/p^n\Z[-1]$ is t-non-negative with respect to the ordinary t-structure. Therefore, by definition of the ordinary t-structure, the $p^\infty$-torsion étale sheaf $\rho^!j_p^*(M_{p^n})$ is t-non-negative with respect to the ordinary t-structure. Therefore, the complex $\Map_{\Shv(S[1/p]_{\et},R)}(R_{S[1/p]}(X[1/p]),\rho^!j_p^*(M_{p^n}))$ is $(-1)$-connected. Hence, the complex $\Map_{\DM_{\et}(S,R)}(M_S(X),M)$ is also $(-1)$-connected. Hence, the motive $M$ is $t_{\ord}$-non-negative. 
\end{proof}

\subsection{The Ordinary Homotopy t-structure on Constructible Artin Motives and the t-exactness of the \texorpdfstring{$\ell$}{\textell}-adic Realization}

We begin this section with the case of smooth Artin motives when the residue characteristic exponents of the base scheme are invertible in the ring of coefficients.
\begin{proposition}\label{smooth Artin ordinary V1} Let $R$ be a regular good ring and let $S$ be a regular scheme. Assume that the residue characteristic exponents of $S$ are invertible in $R$.

Then, the functor $\rho_!$ of \Cref{small et site 2} induces a fully faithful t-exact functor $$\rho_!\colon \Shv_{\In\lis}(S,R)\rar \DM_{\et}^A(S,R)$$ when the left hand side is endowed with the ordinary t-structure and the right hand side is endowed with the ordinary homotopy t-structure.

In particular, the ordinary t-structure on the stable category $\DM^A_{\et}(S,R)$ induces a t-structure on the stable subcategories $\DM^{smA}_{\et}(S,R)$ and $\DM^{smA}_{\et,c}(S,R)$.
\end{proposition}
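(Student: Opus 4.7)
The full faithfulness of $\rho_!\colon \Shv_{\In\lis}(S,R)\to \DM^A_{\et}(S,R)$ is immediate from \Cref{smooth Artin h-motives}, which identifies it with an equivalence onto the full subcategory $\DM^{smA}_{\et}(S,R)$ of $\DM^A_{\et}(S,R)$. My plan to establish the t-exactness is to handle each direction separately, using the adjunction $(\rho_!,\rho^!)$ together with the descriptions of generators available on both sides.

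For right t-exactness, I invoke \Cref{smooth generators t-structure}(2), which says the ordinary t-structure on $\Shv_{\In\lis}(S,R)$ is generated by the sheaves $R_S(Y)$ for $Y$ finite and étale over $S$. Applying $\rho_!$ to such a generator yields $M_S(Y)=M_S^{\BM}(Y)$, and by the remark following \Cref{ordinary} these motives (for $Y$ étale) lie in the set of generators of the ordinary homotopy t-structure on $\DM^A_{\et}(S,R)$. Since $\rho_!$ is exact and colimit-preserving, it will then send the non-positive part into the non-positive part.

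For left t-exactness, let $M\in \Shv_{\In\lis}(S,R)$ be ordinary t-non-negative. I want to show that $\rho_!M$ is $t_{\ord}$-non-negative, which by the characterization following \Cref{ordinary} reduces to verifying that $\Map_{\DM^A_{\et}(S,R)}(M_S(X),\rho_!M)$ is $(-1)$-connected for every étale $S$-scheme $X$. Writing $M_S(X)=\rho_! R_S(X)$ and using the adjunction $(\rho_!,\rho^!)$ yields
\[
\Map_{\DM^A_{\et}(S,R)}(\rho_! R_S(X),\rho_! M)\simeq \Map_{\Shv(S_{\et},R)}(R_S(X),\rho^!\rho_! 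M).
\]
The key input is that, as made explicit in the proof of \Cref{smooth Artin h-motives}, the full faithfulness of $\rho_!$ on $\Shv_{\In\lis}(S,R)$ is equivalent to the unit $M\to \rho^!\rho_!M$ being an equivalence in $\Shv(S_{\et},R)$ for every such $M$. Hence the right hand side reduces to $\Map_{\Shv(S_{\et},R)}(R_S(X),M)$, which is $(-1)$-connected because by \Cref{loc cst}(3) the ordinary t-structure on $\Shv_{\In\lis}(S,R)$ is induced from the one on $\Shv(S_{\et},R)$, and by \Cref{generators ord} the sheaves $R_S(X)$ with $X$ étale generate the ordinary t-structure on $\Shv(S_{\et},R)$.

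For the ``in particular'' statement, the equivalence $\rho_!\colon \Shv_{\In\lis}(S,R)\to \DM^{smA}_{\et}(S,R)$ transports the ordinary t-structure to a t-structure on $\DM^{smA}_{\et}(S,R)$; by the t-exactness just proved, for $N=\rho_!M\in \DM^{smA}_{\et}$ the truncation triangle of $M$ in $\Shv_{\In\lis}$ is sent by $\rho_!$ to the $t_{\ord}$-truncation triangle of $N$ in $\DM^A_{\et}$, so this transported t-structure coincides with the restriction of the ordinary homotopy t-structure and the latter in particular restricts. The analogous statement for $\DM^{smA}_{\et,c}(S,R)$ follows via the equivalence $\rho_!\colon \Shv_{\lis}(S,R)\to \DM^{smA}_{\et,c}(S,R)$ together with the t-structure on $\Shv_{\lis}$ from \Cref{loc cst}(2). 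The hard work having already been carried out in \Cref{smooth Artin h-motives}, there is no serious obstacle left; the only delicate point is the use of the unit equivalence $M\simeq \rho^!\rho_!M$ on $\Shv_{\In\lis}$ to transfer $\Map$-computations from $\DM^A_{\et}(S,R)$ back to the sheaf side.
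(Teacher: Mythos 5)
Your argument is correct and follows the same route as the paper's: right t-exactness from the description of the generators on both sides (the paper establishes it for all of $\Shv(S_{\et},R)$ in the proof of \Cref{ordinary homotopy torsion} using étale schemes, whereas you restrict directly to $\Shv_{\In\lis}(S,R)$ and use finite étale schemes via \Cref{smooth generators t-structure}, which is equivalent), and left t-exactness by using the adjunction $(\rho_!,\rho^!)$ and the fact that the unit $M\to\rho^!\rho_!M$ is an equivalence on $\Shv_{\In\lis}(S,R)$ to transfer the $\Map$-computation back to étale sheaves. Your treatment of the ``in particular'' statement via transport of structure along the equivalence and uniqueness of truncations also matches the intent of the paper's proof.
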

\begin{proof} In the proof of \Cref{ordinary homotopy torsion}, we showed that the exact functor $$\rho_!\colon \Shv(S_{\et},R)\rar \DM_{\et}^A(S,R)$$ is right t-exact. 
Therefore, it suffices to show that the exact functor $$\rho_!\colon \Shv_{\In\lis}(S,R)\rar \DM_{\et}^A(S,R)$$ is left t-exact. Let $N$ be an Ind-lisse étale sheaf and let $X$ be an étale $S$-scheme. By \Cref{smooth Artin h-motives}, we have $$\Map_{\DM_{\et}(S;R)}(M_S(X),\rho_!N)=\Map_{\Shv(S_{\et},R)}(R_S(X),N).$$

Hence, if the sheaf $N$ is t-non-negative, by \Cref{generatorze}, so is the motive $\rho_!N$. 
Therefore, the functor $$\rho_!\colon \Shv_{\In\lis}(S,R)\rar \DM_{\et}^A(S,R)$$ is t-exact. 

By \Cref{smooth Artin h-motives}, the functor $\rho_!$ is fully faithful with essential image the stable subcategory $\DM^{smA}_{\et}(S,R)$. Therefore, the ordinary homotopy t-structure on the stable category $\DM^A_{\et}(S,R)$ induces a t-structure on the stable subcategory $\DM^{smA}_{\et}(S,R)$.

Finally, since the ordinary t-structure on the stable category $\Shv(S_{\et},R)$ induces by \Cref{loc cst} a t-structure on the stable subcategory $\Shv_{\lis}(S,R)$, the ordinary homotopy t-structure on the stable category $\DM^{smA}_{\et}(S,R)$ also induces a t-structure on the stable subcategory $\DM^{smA}_{\et,c}(S,R)$ by \Cref{smooth Artin h-motives}.
\end{proof}

We want to show that the ordinary homotopy t-structure induces a t-structure on constructible Artin motives and to study the properties of the induced t-structure. In the case of Artin motives with rational coefficients, this was done in \cite{plh2}. We will use \Cref{ordinary homotopy torsion} to extend Pepin Lehalleur's result to arbitrary regular good rings. The first step is to extend Pepin Lehalleur's result on the t-exactness of the functor $f^*$ for any morphism of schemes $f$ to a result with intgeral coefficients. To that end, we need to introduce the following auxiliary notion.

\begin{definition} Let $S$ be a scheme and let $R$ be a ring. An Artin étale motive $M$ over $S$ with coefficients in $R$ is \emph{$\Q$-constructible} if the Artin motive $M\otimes_\Z \Q$ is constructible. We denote by $\DM_{\et,\Q-c}^A(S,R)$ the thick stable subcategory of the stable category $\DM_{et}^A(S,R)$ made of those objects which are $\Q$-constructible.
\end{definition}

Recall the following definition from \cite{plh2}.
\begin{definition} We say that a scheme $S$ allows resolution of singularities by alterations if for any separated $S$-scheme $X$ of finite type and any nowhere dense closed subset $Z$ of $X$, there is a proper alteration $g \colon X' \rar X$ with $X'$ regular and such that $g^{-1}(Z)$ is a strict normal crossing divisor.
\end{definition}

\begin{proposition}\label{Q-cons ord} Let $S$ be an excellent scheme allowing resolution of singularities by alterations and let $R$ be a regular good ring. The ordinary homotopy t-structure on the stable category $\DM_{\et}^A(S,R)$ induces a t-structure on the stable subcategory $\DM_{\et,\Q-c}^A(S,R)$.
\end{proposition}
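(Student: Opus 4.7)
My plan is to reduce the statement to the known case of rational coefficients via a change-of-coefficients argument.

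Since $\DM^A_{\et,\Q-c}(S,R)$ is thick by definition, the fiber sequence ${}^{\ord}\tau_{\leqslant 0}M \rar M \rar {}^{\ord}\tau_{>0}M$ shows that both truncations of $M$ lie in $\DM^A_{\et,\Q-c}(S,R)$ if and only if one of them does. Thus it suffices to prove that for any $\Q$-constructible Artin motive $M$, the truncation ${}^{\ord}\tau_{\leqslant 0}M$ is again $\Q$-constructible, i.e. that $({}^{\ord}\tau_{\leqslant 0}M)\otimes_\Z\Q$ is constructible.

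By \Cref{coeff change ordinary}, the change-of-coefficients functor $\sigma_\Q^*\colon \DM^A_{\et}(S,R)\rar \DM^A_{\et}(S,R\otimes_\Z\Q)$ is t-exact for the ordinary homotopy t-structures on both sides. Hence
$$\sigma_\Q^*\bigl({}^{\ord}\tau_{\leqslant 0}M\bigr) = {}^{\ord}\tau_{\leqslant 0}(\sigma_\Q^*M),$$
and by hypothesis $\sigma_\Q^*M$ lies in $\DM^A_{\et,c}(S,R\otimes_\Z\Q)$. It therefore remains to check that the ordinary homotopy t-structure on $\DM^A_{\et}(S,R\otimes_\Z\Q)$ induces a t-structure on the subcategory $\DM^A_{\et,c}(S,R\otimes_\Z\Q)$ of constructible objects.

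At this point I split into the two cases allowed by the definition of a good ring. If $R$ is noetherian of positive characteristic, then $R\otimes_\Z\Q=0$ so the statement is vacuous, and in fact every Artin motive is automatically $\Q$-constructible. Otherwise $R$ is a localization of the ring of integers of a number field $K$ and $R\otimes_\Z\Q=K$. For $K=\Q$ the needed result is the main theorem of \cite{plh2,vaish}. The main obstacle in the present proof is to extend that last statement from $K=\Q$ to a general number field: I plan to argue either by inspecting the Pépin Lehalleur--Vaish argument (whose proof uses only $\Q$-linearity, characteristic zero and resolution of singularities by alterations on $S$, all of which are preserved when enlarging the coefficient field from $\Q$ to $K$), or by exploiting the conservative forgetful functor $\DM^A_{\et,c}(S,K)\rar \DM^A_{\et,c}(S,\Q)$ induced by $\Q\hookrightarrow K$, which is t-exact for the ordinary homotopy t-structures and preserves and detects constructibility since $K$ is finite-dimensional over $\Q$.
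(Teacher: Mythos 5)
Your proposal follows essentially the same reduction as the paper: use the t-exactness of $\sigma_\Q^*$ from Proposition \ref{coeff change ordinary} together with \cite[1.3.19]{bbd} to reduce to constructibility of truncations over $R\otimes_\Z\Q$, then appeal to the Pépin Lehalleur--Vaish theorem. The two proofs differ only in the treatment of the rational case: the paper passes through the embedding $\DM^A_{\et}(S,R) \rar \DM^1_{\et}(S,R)$ and the standard motivic t-structure (citing \cite[4.17]{plh} and \cite[4.1]{plh2}), tacitly applying those results to any $\Q$-algebra of coefficients, whereas you flag the possible dependence on $K=\Q$ and offer a clean fix via the forgetful functor $\sigma_*\colon\DM^A_{\et,c}(S,K)\rar\DM^A_{\et,c}(S,\Q)$, which is indeed t-exact by Proposition \ref{coeff change ordinary}, preserves constructibility since $K$ is a finite free $\Q$-module, and reflects it since $K/\Q$ is separable so any $M$ is a retract of $\sigma^*\sigma_*M$. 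Both routes are valid; yours gives a slightly more self-contained justification of the $K$-coefficient case at the cost of not exhibiting the explicit $\DM^1$ embedding the paper uses.
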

\begin{proof} Assume first that the ring $R$ is a $\Q$-algebra. The proposition precisely asserts that the ordinary homotopy t-structure induces a t-structure on the stable category $\DM^A_{\et,c}(S,R)$. 

But by \cite[4.17]{plh}, the inclusion functor $$\DM^A_{\et}(S,R)\rar \DM^1_{\et}(S,R)$$ is t-exact when the left hand side is endowed with the ordinary homotopy t-structure and the right hand side is endowed with the standard motivic t-structure of Pepin Lehalleur defined in \cite[4.4]{plh}. By \cite[4.1]{plh2}, the truncation ${}^{\mathrm{std}}\tau_{\leqslant 0}$ with respect to the standard t-structure preserves compact objects. Therefore, the functor ${}^{\mathrm{std}}\tau_{\leqslant 0}={}^{{\ord}}\tau_{\leqslant 0}$ preserves $\DM^A_{\et,c}(S,R)$. Hence, using \cite[1.3.19]{bbd}, the ordinary homotopy t-structure induces a t-structure on the stable category $\DM^A_{\et,c}(S,R)$. 

Take now a general ring $R$. Let $M$ be a $\Q$-constructible Artin motive. \Cref{coeff change ordinary} implies that $${}^{{\ord}}\tau_{\leqslant 0}(M)\otimes_\Z \Q={}^{{\ord}}\tau_{\leqslant 0}(M\otimes_\Z \Q).$$

But the Artin motive ${}^{{\ord}}\tau_{\leqslant 0}(M\otimes_\Z \Q)$ is constructible using the result with coefficients in $R\otimes_\Z \Q$. Therefore, the Artin motive ${}^{{\ord}}\tau_{\leqslant 0}(M)$ is $\Q$-constructible. The result then follows from \cite[1.3.19]{bbd}.
\end{proof}

We can now formulate and prove an analog of Pepin Lehalleur's result on the t-exactness of the functor $f^*$ to for any morphism of schemes $f$.
\begin{proposition}\label{f^* ordinary} Let $S$ be an excellent scheme allowing resolution of singularities by alterations, let $f\colon T\rar S$ be a morphism of schemes and let $R$ be a ring.

Then, the functor $$f^*\colon \DM^A_{\et,\Q-c}(S,R)\rar \DM^A_{\et,\Q-c}(T,R)$$ is t-exact when both sides are endowed with the ordinary homotopy t-structure.
\end{proposition}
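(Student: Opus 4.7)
The plan is as follows. By \Cref{t-adj ord}, the adjunction $(f^*, \omega^0 f_*)$ is a $t_{\ord}$-adjunction, so $f^*$ is automatically right $t_{\ord}$-exact, and only left $t_{\ord}$-exactness remains to be established. My strategy is to split any $\Q$-constructible Artin motive into its rational and torsion parts via the short exact sequence $0\to \Z \to \Q \to \Q/\Z \to 0$ and to handle each part using results already at our disposal.

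Concretely, let $M \in \DM^A_{\et,\Q-c}(S,R)$ be $t_{\ord}$-non-negative. Tensoring $M$ with the above short exact sequence produces a cofiber sequence
$$M \to M \otimes_\Z \Q \to M \otimes_\Z \Q/\Z$$
in $\DM^A_{\et}(S,R)$. By \Cref{coeff change ordinary}(2), the functor $-\otimes_\Z \Q$ is $t_{\ord}$-exact, hence $M \otimes_\Z \Q$ is $t_{\ord}$-non-negative; the long exact sequence in cohomology then forces the cofiber $M \otimes_\Z \Q/\Z$ to be $t_{\ord}$-non-negative as well. Since $f^*$ commutes with $-\otimes_\Z \Q$ and $-\otimes_\Z \Q/\Z$ by \Cref{commutation}, applying $f^*$ yields a fiber sequence
$$f^*M \to f^*(M\otimes_\Z \Q) \to f^*(M\otimes_\Z \Q/\Z),$$
and once the two outer terms are known to be $t_{\ord}$-non-negative, another application of the long exact sequence forces $f^*M$ to be $t_{\ord}$-non-negative, which is what we want.

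It remains to treat each outer term. The rational term $f^*(M\otimes_\Z \Q)$ is constructible: indeed $M\otimes_\Z \Q$ is constructible by the $\Q$-constructibility assumption, and $f^*$ preserves $\DM^A_{\et,c}$ by \Cref{f_!}. Its $t_{\ord}$-non-negativity is precisely the rational-coefficient case of the theorem we are proving, which is due to Pepin Lehalleur and Vaish: combine \cite[4.17]{plh} (t-exactness of $\DM^A_{\et}(-,\Q) \hookrightarrow \DM^1_{\et}(-,\Q)$ between the ordinary homotopy t-structure and the standard motivic t-structure) with the t-exactness of $f^*$ on $\DM^1_{\et,c}(-,\Q)$ for the standard motivic t-structure proved in \cite{plh2,vaish}. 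The torsion term is handled by rigidity: $M\otimes_\Z \Q/\Z$ lies in $\DM_{\et,\mathrm{tors}}(S,R)$, so by \Cref{ordinary homotopy torsion} it is $t_{\ord}$-non-negative if and only if it is non-negative for the ordinary t-structure on $\DM_{\et,\mathrm{tors}}(S,R)$. \Cref{f^* ord torsion} shows that $f^*$ is t-exact on torsion motives, so $f^*(M\otimes_\Z \Q/\Z)$ is non-negative for the ordinary t-structure on $\DM_{\et,\mathrm{tors}}(T,R)$, and a second invocation of \Cref{ordinary homotopy torsion} upgrades this to $t_{\ord}$-non-negativity in $\DM^A_{\et}(T,R)$. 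The main difficulty here is not conceptual but one of packaging: all nontrivial content is absorbed by the $\Q$-coefficient result of Pepin Lehalleur and Vaish and by the rigidity equivalence of \Cref{rigidity V2}, while the role of this argument is to glue the two pieces together via the arithmetic fracture square $\Z \to \Q \to \Q/\Z$, which works smoothly thanks to the compatibility of $f^*$ with change of coefficients supplied by \Cref{commutation}.
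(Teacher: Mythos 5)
Your proposal is correct and follows essentially the same route as the paper: reduce to left exactness, split $M$ via $\Z \to \Q \to \Q/\Z$, handle the rational part via \cite[4.1]{plh2} and the torsion part via \Cref{rigidity V2}, \Cref{ordinary homotopy torsion} and \Cref{f^* ord torsion}, then glue. One small imprecision: from $M, M\otimes_\Z\Q \geq_{\ord} 0$, the long exact sequence on the cofiber sequence $M \to M\otimes_\Z\Q \to M\otimes_\Z\Q/\Z$ only yields $M\otimes_\Z\Q/\Z \geq_{\ord} -1$, not $\geq_{\ord} 0$ as you assert; the paper instead rotates the triangle to $M\otimes_\Z\Q/\Z[-1] \to M \to M\otimes_\Z\Q$ and uses closure of $t_{\ord}$-non-negative objects under fibers to get the sharper $M\otimes_\Z\Q/\Z[-1]\geq_{\ord} 0$. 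Your conclusion survives regardless, since the weaker bound $f^*(M\otimes_\Z\Q/\Z)\geq_{\ord}-1$ together with $f^*(M\otimes_\Z\Q)\geq_{\ord}0$ already forces the fiber $f^*M$ to be $t_{\ord}$-non-negative, but the intermediate claim should be justified as the paper does.
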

\begin{proof} \Cref{t-adj ord} implies that the functor $f^*$ is right $t_{\ord}$-exact. Hence, it suffices to show that it is left $t_{\ord}$-exact. 

Let $M$ be a $\Q$-constructible motive over $S$ which is $t_{\ord}$-non-negative. We have an exact triangle $$M\otimes_\Z \Q/\Z[-1]\rar M\rar M\otimes_\Z\Q .$$

By \Cref{coeff change ordinary}, the motive $M\otimes_\Z\Q$ is $t_{\ord}$-non-negative. Since the subcategory of $t_{\ord}$-non-negative objects is closed under limits, the motive $M\otimes_\Z \Q/\Z[-1]$ is also $t_{\ord}$-non-negative.

Furthermore, we have an exact triangle $$f^*(M\otimes_\Z \Q/\Z[-1])\rar f^*(M)\rar f^*(M\otimes_\Z\Q).$$ By \cite[4.1]{plh2}, the motive $f^*(M\otimes_\Z\Q)$ is $t_{\mathrm{std}}$-non-negative and therefore $t_{\ord}$-non-negative by \cite[4.17]{plh}. Furthermore, by \Cref{f^* ord torsion}, the motive $f^*(M\otimes_\Z \Q/\Z[-1])$ is $t_{\ord}$-non-negative. Since the subcategory of t-non-negative objects is closed under extensions, the motive $f^*(M)$ is also $t_{\ord}$-non-negative.
\end{proof}

Using, the above result, the t-exactness statement of \Cref{smooth Artin ordinary V1} for lisse sheaves and without the assumption that the residue characteristics are invertible in the ring of coefficients.

\begin{proposition}\label{smooth Artin ordinary V2}
Let $R$ be a regular good ring and let $S$ be a regular scheme.

Then, the functor $\rho_!$ of \Cref{small et site 2} induces a t-exact functor $$\rho_!\colon \Shv_{\lis}(S,R)\rar \DM_{\et}^A(S,R)$$ when the left hand side is endowed with the ordinary t-structure and the right hand side is endowed with the ordinary homotopy t-structure.
\end{proposition}
\begin{proof}
    The family $(i_x^*)_{x \in S}$ is conservative by \cite[4.3.17]{tcmm} and the functors $i_x^*$ are t-exact when $x$ is a point of $S$ by \Cref{f^* ordinary}. Therefore, by \Cref{cons family of t-ex functors} and \cite[4.4.2]{em}, we can assume that $S$ is the spectrum of a field $k$. In that case, by \Cref{Artin etale motives over a field}, letting $\sigma\colon R\rar R[1/p]$ be the canonical morphism, the down horizontal and right vertical arrows of the diagram
    $$\begin{tikzcd}
        \Shv_{\lis}(k,R) \ar[d,"\sigma^*"] \ar[r,"\rho_!"]& \DM^A_{\et}(k,R) \ar[d,"\sigma^*"]\\
        \Shv_{\lis}(k,R[1/p])\ar[r,"\rho_!"] & \DM^A_{\et}(k,R[1/p])
    \end{tikzcd}$$
    are equivalences; those equivalences are furthermore t-exact by \Cref{smooth Artin ordinary V1,coeff change ordinary}. Since the left vertical arrow is also t-exact, this finishes the proof. 
\end{proof}

We can now prove the main result of this section which extends \cite[4.1]{plh2}. 

\begin{theorem}\label{ordinary is trop bien}  Let $S$ be a scheme allowing resolution of singularities by alterations and let $R$ be a regular good ring. Then,
\begin{enumerate}\item The ordinary homotopy t-structure induces a t-structure on $\DM^A_{\et,c}(S,R)$. We still call this t-structure the \emph{ordinary homotopy} t-structure.
\item The objects of $\DM^A_{\et,c}(S,R)$ are bounded with respect to the ordinary homotopy t-structure which is therefore non-degenerate.
\item Let $f\colon T\rar S$ be a morphism of schemes. Then, the functor $$f^*\colon \DM_{\et,c}^A(S,R)\rar \DM_{\et,c}^A(T,R)$$ is $t_{\ord}$-exact. 
\item If $x$ is a point of $S$, denote by $i_x\colon \{ x\}\rar S$ the inclusion. The family $$\left( i^*_x:\DM_{\et,c}^A(S,R)\rar \DM_{\et,c}^A(k(x),R)\mid x \in S \right)$$ is a conservative family of $t_{\ord}$-exact functors. 
\item Assume that the ring $R$ is a localization of the ring of integers of a number field $K$. Let $v$ be a non-archimedian valuation on $K$ and let $\ell$ be the prime number such that $v$ extends the $\ell$-adic valuation. Then, the reduced $v$-adic realization functor 
$$\overline{\rho}_v\colon \DM_{\et,c}^A(S,R)\rar \mc{D}^b_c(S[1/\ell],R_v)$$ of \Cref{reduced l-adic real} is t-exact when the left hand side is endowed with the ordinary homotopy t-structure and the right hand side is endowed with the ordinary t-structure.
\end{enumerate}
\end{theorem}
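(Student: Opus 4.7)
The plan is to bootstrap from two extreme cases where we already have strong results: smooth Artin motives over a regular base (handled by \Cref{smooth Artin ordinary V1} and \Cref{smooth Artin h-motives in equal characteristic}) and torsion motives (handled by \Cref{ordinary homotopy torsion}), then to glue along stratifications. Using \Cref{p-locality} together with \Cref{p-loc cons 1} and \Cref{p-loc cons 2}, I would first reduce to the case where $R$ is a localization of $\Z_{(p)}$ for some prime number $p$; in this situation, \Cref{stratification Zp} lets me stratify any constructible Artin motive $M$ into regular strata where $M$ restricts to a smooth Artin motive, with the characteristic exponent of the strata inverted in the ring of coefficients.

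For assertions (1) and (2), I would argue by induction on the number of strata of such a decomposition, with base case a regular scheme $T$ (over which $\ell$ is invertible) where constructible smooth Artin motives are identified via $\rho_!$ with the derived category of $\Loc_T$; the latter is bounded and admits truncations preserving constructibility because $R$ is noetherian and regular, giving both boundedness and (1) for smooth Artin motives. For the inductive step, \Cref{glueing ordinary} expresses the t-structure on $\DM^A_{\et}(S,R)$ as the gluing along a closed--open pair $(i,j)$; given a $t_{\ord}$-bounded $M\in \DM^A_{\et,c}(S,R)$, constructibility of its truncations is equivalent to constructibility of $j^*M$ and $i^*M$ together with that of $\omega^0 i^!M$ and $\omega^0 j_*j^*M$, and the first two are constructible by \Cref{f_!} while the latter two are handled using the decomposition of these into their rational part (handled in \cite{plh2}, recalled in \Cref{Q-cons ord}) and their torsion part (handled by \Cref{ordinary homotopy torsion}, since torsion motives are Artin by \Cref{torsion motives are Artin} and the six functors preserve torsion by \Cref{6 fun torsion}).

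Assertion (3) is an immediate consequence of \Cref{f^* ordinary}, since constructible objects are \emph{a fortiori} $\Q$-constructible and since $f^*$ preserves constructibility by \Cref{f_!}. For (4), the $t_{\ord}$-exactness of each $i_x^*$ follows from (3); conservativity is standard: if a constructible Artin motive $M$ is nonzero, then by continuity (\Cref{continuity}) and noetherian induction on the support of $M$, there exists a point $x$ such that $i_x^*M\ne 0$, since the generic point of a minimal closed subset where $M$ is nonzero provides such an $x$.

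For assertion (5), the reduced $v$-adic realization $\overline{\rho}_v$ commutes with the six functors and with the stratification-gluing procedure above, so by the same reduction it suffices to prove $t$-exactness on smooth Artin motives over a regular scheme on which $\ell$ is invertible; here \Cref{l-adic real 1} shows that $\overline{\rho}_v$ factors through the functor $-\otimes_R R_v\colon \Loc_S(R)\to \Sh^{smA}(S,R_v)$ and so is t-exact for the ordinary t-structures. The main obstacle is the inductive step of (1)--(2): verifying that $\omega^0 i^!M$ and $\omega^0 j_*j^*M$ remain constructible when $M$ is a constructible smooth Artin motive, which fails in general without the rational/torsion decomposition. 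Concretely, once $M$ is written as an extension of its $\Q$-constructible part and a torsion part, Pépin Lehalleur's argument handles the first factor and the rigidity-type equivalence of \Cref{rigidity V2}, combined with the t-exactness of \Cref{ordinary homotopy torsion}, handles the second, closing the induction.
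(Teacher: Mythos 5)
Your overall scaffolding—reduce to $\pp$-local coefficients via \Cref{p-locality} and \Cref{p-loc cons 1}, stratify using \Cref{stratification Zp}, treat smooth Artin motives over regular strata and torsion motives as the two well-understood extremes—matches the paper's strategy, and your deductions of (3), (4), (5) from \Cref{f^* ordinary}, \Cref{cons family of t-ex functors} and the t-exactness of the realization on fields are essentially the paper's. However, the inductive step for (1)–(2) has a genuine gap.

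You claim, invoking \Cref{glueing ordinary}, that for a $t_{\ord}$-bounded constructible $M$, constructibility of the truncations ${}^{\ord}\tau_{\leqslant 0}(M)$ is \emph{equivalent} to constructibility of $j^*M$, $i^*M$, $\omega^0 i^!M$ and $\omega^0 j_*j^*M$. This is false. Taking $S=\mb{P}^1_k$, $i$ the inclusion of a closed point and $M=\un_S$, the motive $\omega^0 i^!\un_S$ is \emph{not} constructible by \Cref{omega^0 corps}, and hence neither is $\omega^0 j_* j^*\un_S$ (via the triangle $i_!\omega^0 i^! \to \id \to \omega^0 j_*j^*$); yet $\un_S$ lies in the heart of $t_{\ord}$ by \Cref{coeur}, so its truncations are certainly constructible. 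The rational/torsion fix you then propose does not rescue this: the exact triangle $M\otimes_\Z\Q/\Z[-1]\to M \to M\otimes_\Z\Q$ is a decomposition in $\DM^A_{\et}(S,R)$, not in $\DM^A_{\et,c}(S,R)$—the two outer terms live over the rings $R\otimes_\Z\Q/\Z$ and $R\otimes_\Z\Q$ and are not constructible over $R$—so constructibility of $\omega^0 j_*j^*M$ cannot be extracted from Pépin Lehalleur's result and rigidity term by term. Indeed, a main negative point of this paper (see \Cref{quand c'est qu'c'est constructible}) is precisely that $\omega^0 f_*$ and $\omega^0 f^!$ destroy constructibility with genuinely integral coefficients.

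The correct induction never touches $\omega^0 j_*$ or $\omega^0 i^!$: since $j^*$ and $i^*$ are $t_{\ord}$-exact on $\Q$-constructible motives by \Cref{f^* ordinary}, one has $j^*{}^{\ord}\tau_{\leqslant 0}(M)={}^{\ord}\tau_{\leqslant 0}(j^*M)$ and $i^*{}^{\ord}\tau_{\leqslant 0}(M)={}^{\ord}\tau_{\leqslant 0}(i^*M)$, both of which are ($\pp$-)constructible by the induction hypothesis on the number of strata; then the localization triangle $j_!j^*N \to N \to i_*i^*N$ applied to $N={}^{\ord}\tau_{\leqslant 0}(M)$ concludes, because $j_!$ and $i_*$—unlike $\omega^0 j_*$ and $\omega^0 i^!$—do preserve constructibility (\Cref{f_!}). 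This is exactly where \Cref{f^* ordinary} earns its keep: it lets you compute truncations strata-wise without ever invoking the pathological right adjoints. The smooth base case is then \Cref{p-loc lemma 1} (residue characteristics invertible on $S$) or \Cref{smooth Artin ordinary V1} after inverting $p$ in $R$ via \cite[A.3.4]{em} (equal characteristic $p$), as you correctly note.
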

\begin{proof} First notice that if the first assertion is true, the third assertion makes sense and follows from \Cref{f^* ordinary}. Furthermore, the third assertion implies the fourth assertion since the family $(i_x^*)_{x \in S}$ is conservative by \cite[4.3.17]{tcmm}. Using \Cref{cons family of t-ex functors}, the fourth assertion implies the fifth assertion. Finally the second assertion follows from \Cref{coeur}. Hence, it suffices to show the first assertion. 

If $\pp$ is a prime ideal of $\Z$ and if $X$ is a scheme, we say that an Artin motive is \emph{$\pp$-constructible} when, with the notations of \Cref{p-loc}, its image in the category $\DM^A_{\et}(X,R)_\pp$ belongs to $\DM^{A}_{\et,c}(X,R)_\pp$. Notice that \begin{itemize}
    \item if $f$ is a morphism of schemes, the functor $f^*$ preserve $\pp$-constructible Artin motives.
    \item if $f$ is a quasi-finite morphism of schemes, the functor $f_!$ preserve $\pp$-constructible Artin motives.
    \item the category of $\pp$-constructible motives over $X$ is a thick subcategory of $\DM^A_{\et}(X,R)$ for any scheme $X$.
\end{itemize}

Let $M$ be a constructible Artin motive over $S$. We want to show that the Artin motive ${}^{{\ord}}\tau_{\leqslant 0}(M)$ is constructible. By \Cref{p-locality}, it suffices to show that it is $\pp$-constructible for any maximal ideal $\pp$ of $\Z$.

Fix a maximal ideal $\pp$ of $\Z$. Let $p$ be a generator of $\pp$, let $S_p=S\times_{\Z} \Z/p\Z$, let $i_p\colon S_p\rar S$ be the immersion and let $j_p\colon S[1/p]\rar S$ be the complementary open immersion.

We have an exact triangle $$(j_p)_!j_p^*\hspace{1mm}{}^{{\ord}}\tau_{\leqslant 0}(M) \rar{}^{{\ord}} \tau_{\leqslant 0}(M) \rar (i_p)_*i_p^*\hspace{1mm}{}^{{\ord}}\tau_{\leqslant 0}(M).$$

The functors $(i_p)_*$ and $(j_p)_!$ preserve $\pp$-constructible Artin motives. Therefore, it suffices to show that the Artin motives $j_p^*\hspace{1mm}{}^{{\ord}}\tau_{\leqslant 0}(M)$ and $i_p^*\hspace{1mm}{}^{{\ord}}\tau_{\leqslant 0}(M)$ are $\pp$-constructible.

Using \Cref{f^* ordinary}, we have $$j_p^*\hspace{1mm}{}^{{\ord}}\tau_{\leqslant 0}(M)={}^{{\ord}}\tau_{\leqslant 0}(j_p^*M)$$ and $$i_p^*\hspace{1mm}{}^{{\ord}}\tau_{\leqslant 0}(M)={}^{{\ord}}\tau_{\leqslant 0}(i_p^*M).$$
Hence, we can assume that either $p$ is invertible on $S$ or that $S$ is of characteristic $p$. 

\Cref{AM.stratification} then gives a stratification of $S$ with regular strata such that for any stratum $i\colon T\rar S$, the motive $i^* M$ is a constructible smooth Artin motive. Assume that for any stratum $i\colon T\rar S$, the motive ${}^{{\ord}}\tau_{\leqslant 0}(i^*M)$ is $\pp$-constructible. Using the localization triangle and the stability of $\pp$-constructible objects under the functors $f^*$ and $f_!$ for $f$ quasi-finite, we can prove by induction on the number of strata that the motive ${}^{{\ord}}\tau_{\leqslant 0}(M)$ is constructible. Hence, we can assume that the scheme $S$ is regular and that the motive $M$ is a constructible smooth Artin motive.

If $p$ is invertible on $S$, the result follows from \Cref{p-loc lemma 1} below. 

Assume that $S$ is of characteristic $p$. It suffices to show that the Artin motive ${}^{{\ord}}\tau_{\leqslant 0}(M)$ is a constructible smooth Artin motive. Using \cite[A.3.4]{em}, we can assume that $p$ is invertible in $R$. The result then follows from \Cref{smooth Artin ordinary V1}.
\end{proof}

\begin{lemma}\label{p-loc lemma 1} Let $R$ be a regular good ring, let $S$ be a regular scheme, let $\pp$ be a maximal ideal of $\Z$ and let $p$ be a generator of $\pp$. Assume that $p$ is invertible in $S$. 
\begin{enumerate}
    \item The functor $\rho_!$ of \Cref{small et site 2} induces a monoidal equivalence
%$$\Shv_{\In\lis}(S,R)_\pp\longrightarrow \DM^{smA}_{\et}(S,R)_\pp$$
$$\Shv_{\lis}(S,R)_\pp\longrightarrow \DM^{smA}_{\et,c}(S,R)_\pp.$$
    \item The functor $\rho_!$ of \Cref{small et site 2} induces a t-exact fully faithful functor $$(\rho_!)_\pp\colon \Shv_{\lis}(S,R)_\pp\rar \DM_{\et}^A(S,R)_\pp$$ when the left hand side is endowed with the t-structure obtained by applying \Cref{p-loc 1} to the ordinary t-structure and the right hand side is endowed with the t-structure obtained by applying \Cref{p-loc 1} to the ordinary homotopy t-structure.

    In particular, the ordinary t-structure on the stable category $\DM^A_{\et}(S,R)_\pp$ induces a t-structure on the stable subcategory $\DM^{smA}_{\et}(S,R)_\pp$.
\end{enumerate}
\end{lemma}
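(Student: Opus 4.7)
The plan is to reduce both assertions to \Cref{smooth Artin h-motives} and \Cref{smooth Artin ordinary V1} applied with coefficients $R' := R \otimes_\Z \Z_\pp$. The ring $R'$ is again regular and good. Moreover, since $p$ is invertible on $S$, every residue characteristic exponent of $S$ equals $1$ or a prime $\ell \neq p$; in either case it is invertible in $\Z_\pp$ and hence in $R'$. Thus the residue characteristic hypothesis of these two results is satisfied for the pair $(S, R')$, giving monoidal t-exact equivalences
\[
    \rho_!^{R'} \colon \Shv_{\In\lis}(S, R') \xrightarrow{\sim} \DM^{smA}_{\et}(S, R'), \qquad \rho_!^{R'} \colon \Shv_{\lis}(S, R') \xrightarrow{\sim} \DM^{smA}_{\et,c}(S, R').
\]

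For the constructible part of assertion (1), I would combine the equivalence $\rho_!^{R'}$ on $\Shv_{\lis}(S, R')$ with the identifications $\Shv_{\lis}(S, R)_\pp \simeq \Shv_{\lis}(S, R')$ from \Cref{p-loc cons 2} and $\DM^{smA}_{\et,c}(S, R)_\pp \simeq \DM^{smA}_{\et,c}(S, R')$, the latter obtained by adapting the proof of \Cref{p-loc cons 1} verbatim (the argument only uses \Cref{commutation}, the idempotent-completeness of the target, and the presence of the generators $M_S(V)$, $V$ finite étale over $S$, in the essential image). For the non-constructible version, I would argue that $(\rho_!)_\pp$ is essentially surjective because $\rho_! \colon \Shv_{\In\lis}(S, R) \to \DM^{smA}_{\et}(S, R)$ already is (it preserves colimits and sends the generators $R_S(V)$ to the generators $M_S(V)$), and that $(\rho_!)_\pp$ is fully faithful by extending the full faithfulness on the constructible part through a density argument, using colimit-preservation of $(\rho_!)_\pp$ from \Cref{p-loc 2}(1).

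For assertion (2), right t-exactness is automatic: $\rho_! \colon \Shv(S_{\et}, R) \to \DM^A_{\et}(S, R)$ is right t-exact by the argument given in the proof of \Cref{ordinary homotopy torsion} (which requires no hypothesis on residue characteristics), and \Cref{p-loc 2}(3) then gives right t-exactness of $(\rho_!)_\pp$. For left t-exactness, take $N \in \Shv_{\In\lis}(S, R)^{\geqslant 0}$ and $X$ étale over $S$; it suffices to show that $\pi_n\left(\Map_{\DM^A_{\et}(S, R)}(M_S(X), \rho_! N) \otimes_\Z \Z_\pp\right) = 0$ for $n < 0$. By \Cref{commutation}, parts (3) (applied to the constructible motive $M_S(X)$) and (1) (commutation of $\rho_!$ with change of coefficients), this group identifies with $\pi_n \Map_{\DM^A_{\et}(S, R')}\left(M_S(X), \rho_!^{R'}(N \otimes_\Z \Z_\pp)\right)$. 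The sheaf $N \otimes_\Z \Z_\pp$ lies in $\Shv_{\In\lis}(S, R')^{\geqslant 0}$ since $\Z_\pp$ is $\Z$-flat, and by \Cref{smooth Artin ordinary V1} applied to $R'$ the functor $\rho_!^{R'}$ is left t-exact, so this mapping space is $(-1)$-connected, yielding the desired vanishing.

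The main obstacle I expect is the non-constructible density argument used in assertion (1). The difficulty is that the $\pp$-localization $\mc{C}_\pp$ is built as the idempotent completion of a naive tensor localization, so verifying that the colimit structure on $\Shv_{\In\lis}(S, R)_\pp$ interacts correctly with the compact objects coming from $\Shv_{\lis}(S, R)$ requires careful bookkeeping with the universal property of \Cref{p-loc 1}. Once full faithfulness on the constructible part is in place, the density argument itself is standard, but its compatibility with the $\pp$-localization formalism is the technical heart of the argument; everything else reduces routinely to the case of coefficients $R'$ through \Cref{commutation} and \Cref{smooth Artin h-motives}.
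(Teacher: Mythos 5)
Your reduction to the ring $R' = R \otimes_\Z \Z_\pp$ via the change-of-coefficients results of \Cref{commutation} is exactly the strategy of the paper, and your treatment of assertion (2) — identifying $\Map_{\DM^A_{\et}(S,R)}(M_S(X),\rho_! N)\otimes_\Z \Z_\pp$ with the corresponding mapping spectrum over $R'$ and then invoking \Cref{smooth Artin ordinary V1} over $R'$ — is essentially the paper's argument.

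For assertion (1), however, the density argument you flag as "the technical heart" is unnecessary, and this is precisely where the paper is cleaner. Rather than establishing full faithfulness on constructible objects and then extending by a colimit/compactness argument (which would require verifying that the compact generators of $\Shv_{\In\lis}(S,R)$ remain compact generators after $\pp$-localization and that $(\rho_!)_\pp$ preserves them — neither is automatic, and integral étale motives are notoriously delicate on this point), the paper shows directly that the unit $\id \to \rho_\pp^!(\rho_!)_\pp$ is an equivalence. The key observation that sidesteps all compactness questions is that, by the very construction of the $\pp$-localization as the idempotent completion of the naive localization, \emph{every} object $M$ of $\Shv_{\In\lis}(S,R)_\pp$ is a retract of one in the image of $\Shv_{\In\lis}(S,R) \to \Shv_{\In\lis}(S,R)_\pp^{\mathrm{na}}$. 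Since "the unit is an equivalence at $M$" is a retract-closed condition, one may assume $M$ is in the naive localization, where the relevant mapping spectra are by definition the original ones tensored with $\Z_\pp$; then \Cref{commutation} identifies the unit map with the unit map over $R'$, and \Cref{smooth Artin h-motives} applied to $R'$ concludes. This gives full faithfulness on all of $\Shv_{\In\lis}(S,R)_\pp$ at once, after which essential surjectivity follows from colimit-preservation and the generation statement, as you say, and the constructible case is obtained by restricting to thick subcategories generated by the $M_S(V)$ and $R_S(V)$ for $V$ finite étale. So your approach has the right ingredients, but to close the gap you should replace the density argument by the retract observation.
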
 
\begin{proof} \Cref{p-loc cons 2,p-loc cons 3} and \Cref{smooth Artin h-motives} imply the first assertion. The second assertion then follows from the first assertion and from \Cref{smooth Artin ordinary V2,p-loc 2}.
\end{proof}

\begin{definition} Let $S$ be a scheme allowing resolution of singularities by alterations and let $R$ be a regular good ring. The \emph{abelian category of ordinary Artin motives} $\mathrm{M}^A_{{\ord}}(S,R)$ is the heart of the ordinary homotopy t-structure of $\mc{DM}^A_{\et,c}(S,R)$.
\end{definition}

\begin{corollary} Let $S$ be a scheme allowing resolution of singularities by alterations and let $R$ be a regular good ring.  The following properties hold.
\begin{enumerate}\item Let $f\colon T\rar S$ be a morphism of schemes. Then, the functor $f^*$ induces an exact functor $$f^*\colon\mathrm{M}^A_{{\ord}}(S,R)\rar \mathrm{M}^A_{{\ord}}(T,R).$$
\item If $x$ is a point of $ S$, denote by $i_x\colon \{ x\}\rar S$ the inclusion and by $p(x)$ the characteristic exponent of $k(x)$. The family $$\left( \alpha^!i^*_x:\mathrm{M}^A_{{\ord}}(S,R)\rar \Rep^A(G_{k(x)},R[1/p(x)])\mid x \in S \right)$$ is a conservative family of exact functors.
\item Assume that the ring $R$ is a localization of the ring of integers of a number field $K$. Let $v$ be a non-archimedian valuation on $K$ and let $\ell$ be the prime number such that $v$ extends the $\ell$-adic valuation. Then, the reduced $v$-adic realization functor induces an exact and conservative functor
$$\overline{\rho}_v\colon\mathrm{M}^A_{{\ord}}(S,R)\rar \Sh^A(S[1/\ell],R_v).$$
\end{enumerate}
\end{corollary}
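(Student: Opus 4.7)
The strategy for all three items is to restrict the t-exact functors produced by \Cref{ordinary is trop bien} to the hearts of the ordinary homotopy t-structures, using the concrete descriptions of those hearts provided by \Cref{Artin etale motives over a field}, \Cref{lisse fields} and by the preliminaries on Artin $\ell$-adic sheaves in \Cref{Artin l-adic sheaves}. In each case the task reduces to checking that the various target categories are correctly identified.

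For (1), \Cref{ordinary is trop bien}(3) asserts the $t_\ord$-exactness of $f^*\colon \DM^A_{\et,c}(S,R)\to \DM^A_{\et,c}(T,R)$; this immediately induces an exact functor between hearts. Part (2) is the particular case $f=i_x$, except that one further identifies the target $\mathrm{M}^A_\ord(k(x),R)$ with $\Rep^A(G_{k(x)},R[1/p(x)])$. This identification arises by combining the equivalence $\alpha_!\colon \mc{D}^b(\Rep^A(G_{k(x)},R[1/p(x)]))\xrightarrow{\sim}\DM^A_{\et,c}(k(x),R)$ of \Cref{Artin etale motives over a field} with the t-exactness of $\rho_!$ given by \Cref{smooth Artin ordinary V1} (the residue characteristic exponent $p(x)$ of the field $k(x)$ is invertible in the coefficient ring $R[1/p(x)]$) together with the identification of the heart of $\Shv_{\lis}(k(x),R[1/p(x)])$ as $\Rep^A(G_{k(x)},R[1/p(x)])$ provided by \Cref{loc cst} and \Cref{lisse fields}. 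The conservativity of the family then follows from \Cref{ordinary is trop bien}(4): any object $M\in \mathrm{M}^A_\ord(S,R)$ sent to zero by each $\alpha^!i_x^*$ has $i_x^*M=0$ for every point $x\in S$, hence vanishes in $\DM^A_{\et,c}(S,R)$ and a fortiori in the heart.

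For (3), the $t_\ord$-exactness of $\overline{\rho}_v$ provided by \Cref{ordinary is trop bien}(5) yields an exact functor $\mathrm{M}^A_\ord(S,R)\to \Sh_c(S[1/\ell],R_v)$; the non-formal step is to show that this functor factors through $\Sh^A(S[1/\ell],R_v)$. I would argue by generation: by \Cref{generators}, $\DM^A_{\et,c}(S,R)$ is generated as a thick subcategory by motives of the form $h_S(X)=f_*\un_X$ with $f\colon X\to S$ finite, and the six-functor compatibility of the $v$-adic realization recalled in \Cref{l-adic real} sends these to objects of the form $(f[1/\ell])_*(R_v)_{X[1/\ell]}$, which lie in $\mc{D}^A(S[1/\ell],R_v)$ by definition. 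Since $\mc{D}^A$ is thick in $\mc{D}^b_c$, the essential image of $\overline{\rho}_v$ is contained in $\mc{D}^A(S[1/\ell],R_v)$, so that the induced functor on hearts takes values in $\mc{D}^A\cap \Sh_c(S[1/\ell],R_v)^{\heart}=\Sh^A(S[1/\ell],R_v)$ (see \Cref{Artin l-adic sheaves}). For conservativity, the family of reduced $v$-adic realizations indexed by all non-archimedian valuations non-negative on $R$ is jointly conservative on $\DM^A_{\et,c}(S,R)$ by \Cref{conservativityconj}(3), and this joint conservativity transfers to the hearts. The only genuine subtlety in the whole proof is this last identification of the essential image with Artin $\ell$-adic sheaves; everything else is a formal consequence of \Cref{ordinary is trop bien}.
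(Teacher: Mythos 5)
Your approach is essentially the one implicit in the paper: all three items are meant to follow directly from \Cref{ordinary is trop bien}, together with the identifications of the various hearts. Parts (1) and (2) are handled correctly, and your reduction of the conservativity of the family $(\alpha^!i_x^*)$ to the conservativity of $(i_x^*)$ on $\DM^A_{\et,c}(S,R)$ (via the exactness of the functors and the detection of zero objects) is the right argument. Your identification of the target of $\overline{\rho}_v$ in part (3) with $\Sh^A(S[1/\ell],R_v)$ by checking on the generators $h_S(X)$ is also sound, though as you may have noticed the containment $\overline{\rho}_v\left(\mc{DM}^A_{\et,c}(S,R)\right)\subseteq \mc{D}^A(S[1/\ell],R_v)$ is already recorded in the preliminaries on Artin $\ell$-adic sheaves.

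There is, however, a genuine gap in your treatment of conservativity in part (3). You invoke \Cref{conservativityconj}(3), which asserts the joint conservativity of the \emph{family} $(\overline{\rho}_v)_v$ over all non-archimedian valuations $v$ non-negative on $R$, and conclude that ``this joint conservativity transfers to the hearts.'' But the statement you are asked to prove is that a \emph{single} $\overline{\rho}_v$ is conservative on $\mathrm{M}^A_{\ord}(S,R)$, and family-level conservativity does not give that. Concretely, if $R$ is not a field (say $R=\Z$) and $p\neq\ell$, then an object of $\mathrm{M}^A_{\ord}(S,\Z)$ that is $p$-torsion is annihilated by $\overline{\rho}_v$ for any $v$ extending the $\ell$-adic valuation, yet is nonzero; so the assertion for a single $v$ cannot be obtained from the family statement, and in fact appears to require either that $R=K$ is a number field (where \Cref{conservativityconj}(1) applies) or that $R$ has only one non-archimedian place, e.g.\ $R=\Z_{(p)}$. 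If you wish to prove conservativity of an individual $\overline{\rho}_v$ under the stated hypotheses, you should either justify why the torsion obstruction above does not arise, or restrict to the case $R=K$ where \Cref{conservativityconj}(1) gives the required single-functor conservativity directly; simply citing the family version leaves the claim unproved.
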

%\include{}
%\include{AM.tex}

%\addcontentsline{toc}{chapter}{Bibliography}
\bibliographystyle{alpha}
\bibliography{biblio.bib}
\end{document}